\documentclass[a4paper, 11pt, reqno]{amsart}
\usepackage{amsmath}
\usepackage{amscd}
\usepackage{amssymb,enumerate,mathrsfs}
\usepackage{mathtools}
\usepackage[utf8]{inputenc}
\usepackage[english]{babel}
\usepackage{mathrsfs}
\usepackage{amsthm}
\usepackage{amsfonts}
\usepackage{xfrac}
\usepackage{tikz-cd}
\usepackage{hyperref}
\usepackage{theoremref}
\usepackage{imakeidx}
\usepackage{enumitem}
\usepackage{calligra,mathrsfs}
\usepackage{csquotes}
\usepackage{xcolor}
\usepackage{dsfont}
\usepackage[mathscr]{eucal}
\usepackage{comment}
\usepackage[textsize=tiny]{todonotes}

\theoremstyle{definition}
\newtheorem{definition}{Definition}[section]
\newtheorem{example}[definition]{Example}

\theoremstyle{plain}
\newtheorem{theorem}[definition]{Theorem}
\newtheorem{proposition}[definition]{Proposition}
\newtheorem{corollary}[definition]{Corollary}

\newtheorem{lemma}[definition]{Lemma}

\theoremstyle{remark}
\newtheorem{remark}[definition]{Remark}

\DeclareMathOperator{\Hom}{Hom}

\DeclareMathOperator{\Spec}{Spec}

\newcommand{\Z}{\mathbb{Z}}
\newcommand{\A}{\mathbb{A}}

\newcommand{\Mod}{\mathsf{Mod}}

\newcommand{\Coh}{\mathsf{Coh}}

\renewcommand{\tilde}{\widetilde}
\newcommand{\Dqc}{\mathrm{D}_{\sf{qc}}}

\renewcommand{\sf}{\mathsf}

\usepackage{import}
\usepackage[T1]{fontenc}

\newcommand{\qc}{\mathrm{QCoh}}
\newcommand{\Dsqc}{\Ds_{\qc}}
\newcommand{\D}{\mathrm{D}}
\newcommand{\Ds}{\mathcal{D}}
\renewcommand{\H}{\mathrm{H}}

\renewcommand{\mathbb}{\mathbf}

\newcommand{\Orb}{\mathcal{O}}
\DeclareMathOperator{\Map}{Map}
\newcommand{\gms}{\textnormal{gms}}

\newcommand{\LDER}{L}
\newcommand{\ltensor}{\otimes^{\LDER}}

\newcommand{\RDER}{R}

\newcommand{\coh}{\mathrm{Coh}}
\renewcommand{\Mod}{\mathrm{Mod}}
\renewcommand{\Dqc}{\D_{\qc}}

\newcommand{\holim}[1]{\underset{#1}{\mathrm{holim}}\,}
\newcommand{\cl}{\mathrm{cl}}
\newcommand{\Ani}{\mathsf{Ani}}
\newcommand{\Aff}{\mathsf{Aff}}
\newcommand{\STK}{\mathsf{Stk}}
\newcommand{\SHV}{\mathsf{Shv}}
\newcommand{\AniRing}{\sf {AniRing}}
\newcommand{\Ring}{\mathsf{Ring}}
\newcommand{\tCat}{\mathsf{Cat}}
\newcommand{\Cat}{\mathsf{Cat}_{\infty}}
\newcommand{\bCat}{\widehat{\mathsf{Cat}}_{\infty}}
\newcommand{\LPr}{\mathsf{Pr}^{\LDER}}

\numberwithin{equation}{section}
\title[Finiteness theorems]{Finiteness theorems for pseudo-coherent
  complexes on algebraic stacks}
\author{Jack Hall}
\email{jack.hall@unimelb.edu.au}
\author{Oliver Li}
\email{oliver.li@unimelb.edu.au}
\subjclass[2020]{Primary: 14A30, 14F08, 14A20; Secondary: 14D23}
\begin{document}
\begin{abstract}
  Let $f\colon X \to Y$ be a proper and tame morphism of algebraic
  stacks, where $X$ and $Y$ are locally of finite type over an algebraic stack $S$. We prove that $R f_*$ sends complexes that are
  pseudo-coherent relative to $S$ to pseudo-coherent complexes relative to $S$. In the scheme case, this resolves a conjecture of Illusie from SGA6. We
  also prove related and new results in the non-tame setting (e.g., infinite
  stabilizers). Our methods use derived algebraic
  geometry and also give new proofs of classical
  statements for schemes due to Kiehl. Along the way, we extend some foundational results for quasi-coherent sheaves on algebraic stacks to the derived setting.
\end{abstract}
\maketitle
\section{Introduction}
Let $f \colon X \to Y$ be a proper morphism of noetherian schemes. A
fundamental finiteness result of Grothendieck \cite[Theorem 3.2.1]{EGAIII1} is
that if $M$ is a coherent $\Orb_X$-module, then the derived
pushforwards $\RDER^if_*M$ are coherent $\Orb_Y$-modules for all
$i\geq 0$. 

In the non-noetherian setting, Illusie conjectured \cite[Conjecture 2.1]{illusie_sga6} a derived reformulation of Grothendieck's result: if $f \colon X \to Y$ is a proper morphism of schemes, where $X$ and $Y$ are locally of finite type over a scheme $S$, then 
$\RDER f_* \colon \D(X) \to \D(Y)$ preserves pseudo-coherent
complexes relative to $S$ (see
\S\ref{S:pseudocoherence} for definitions). This was proved by Illusie in the projective case \cite[Th\'eor\`eme 2.2]{illusie_sga6} and Kiehl in the proper case when $Y=S$ \cite[Theorem 2.9']{Kiehl1972}. This formulation recovers
Grothendieck's result in the noetherian case: if $U \to V$ is a finite type
morphism of noetherian schemes, then a bounded above complex of $\Orb_U$-modules is pseudo-coherent relative to $V$ if and only if it has $\Orb_U$-coherent
cohomology sheaves (Example \ref{E:noetherian-smooth-pcoh-relative}).  Our first result is the following.
\begin{theorem}\label{thm:tame-firstprime}
  Let $f \colon X \to Y$ be a proper and tame (e.g., representable) morphism of algebraic
  stacks, where $X$ and $Y$ are locally of finite type over an algebraic stack
  $S$. Then $\RDER f_*$ preserves pseudo-coherent complexes relative to
  $S$.
\end{theorem}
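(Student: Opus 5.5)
We reduce to the case where $S$ is an affine scheme and then use derived algebraic geometry to convert relative pseudo-coherence into absolute pseudo-coherence.

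\textbf{Step 1: reductions.} Pseudo-coherence relative to $S$ is smooth local on $S$ and on the source, and formation of $\RDER f_*$ commutes with flat base change on $Y$ because $f$ is tame. So we may assume that $S = \Spec A$ is affine and that $Y$ is affine and of finite type over $A$. Tameness over the affine base is then fppf-local, and so is properness.

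\textbf{Step 2: pass to a derived polynomial base.} Write $A$ as a filtered colimit of finitely generated $\Z$-algebras. Choose a closed immersion $Y \hookrightarrow \A^n_A$ and a surjection from a polynomial ring $\Z[x_1,\dots,x_m] \to A$ onto a finitely generated piece. The key reformulation is the following characterisation, which we expect to prove along the way using animated rings. A complex $M$ on $X$ is pseudo-coherent relative to $S$ if and only if, for some (equivalently any) derived thickening, $M$ is pseudo-coherent over a derived stack that is locally almost of finite presentation over a noetherian base. Concretely, relative pseudo-coherence is absolute pseudo-coherence after embedding $X$ into something smooth over $S$. Via derived base change from a noetherian (indeed finite type over $\Z$) derived model, this turns the problem into two parts:
\begin{itemize}
\item[(a)] proving that $\RDER f_*$ preserves almost perfect (that is, pseudo-coherent) complexes for proper tame morphisms of derived stacks that are locally almost of finite presentation over a noetherian base;
\item[(b)] checking that relative pseudo-coherence is detected by this derived enhancement.
\end{itemize}
Part (b) follows Illusie's local description: on a closed immersion into a smooth $S$-scheme, relative pseudo-coherence means pseudo-coherence of the pushforward to the smooth scheme, and pseudo-coherence over a derived scheme that is almost of finite presentation over a noetherian ring is the same as having coherent homotopy in bounded-below ranges.

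\textbf{Step 3: the noetherian derived case.} For (a) we use the truncation and Postnikov argument. Almost perfect complexes are those with coherent homotopy sheaves that are bounded above. A tame morphism has finite cohomological dimension on quasi-coherent sheaves, so $\RDER f_*$ is bounded and commutes with truncations up to a shift. It therefore suffices to show that $\RDER f_*$ sends coherent sheaves on the classical truncation to complexes with coherent cohomology. This is the known finiteness theorem for proper morphisms of noetherian algebraic stacks (Faltings, Olsson). Lifting from the classical truncation to the derived stack uses the fact that each $\pi_i\Orb$ is a coherent module over $\pi_0$, together with a dévissage along the Postnikov tower. Cohomological boundedness, coming from tameness, makes the limit harmless.

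\textbf{Main obstacle.} The hardest step is Step 2(b) together with the descent/approximation. We must show that relative pseudo-coherence over an arbitrary (non-noetherian) $S$ can be computed on a derived model over a noetherian base, and that this is compatible with pushforward. The issue is that the base change $\Spec A \to \Spec A_0$ is not flat, so it must be taken in the derived sense, and it requires tor-independent base change for $\RDER f_*$, which is where tameness is used again. We would first try to prove that derived base change of almost perfect complexes along arbitrary maps $A_0 \to A$ preserves almost perfectness and commutes with $\RDER f_*$ for tame $f$. We would then identify the derived fibre product with a smooth-local embedding computing relative pseudo-coherence.
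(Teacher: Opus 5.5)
\textbf{The gap: passing from the noetherian case to the non-noetherian base.}

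Your Step~3 is sound for what it covers. Step~2(b) is in the spirit of the paper. The paper uses the closed immersion $X=\tilde X_{\cl}\hookrightarrow \tilde X=X_0\times^{\RDER}_{S_0}S$, where $S_0=\Spec A_0$ is the noetherian base of the model. Relative and absolute pseudo-coherence agree on $\tilde X$ because $\tilde X\to S$ is almost finitely presented (Proposition~\ref{proposition: rel-abs-agree}).

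The gap is the passage between these two steps. The derived thickening $\tilde X$ is almost of finite presentation over the \emph{non-noetherian} $S=\Spec A$, not over a noetherian base. So your reformulation in Step~2 is not correct as stated. Neither the characterisation ``bounded above with coherent homotopy sheaves'' nor the argument of Step~3 applies to $\tilde f\colon\tilde X\to\tilde Y$.

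The remedy you propose under ``Main obstacle'' does not close this. It says that $\LDER\alpha^*$ preserves almost perfection and commutes with $\RDER f_*$. Both parts are true and easy; the second is Proposition~\ref{prop:relative-compactness-properties}\ref{prop:relative-compactness-properties:base-change}, using that tame implies concentrated. But this only proves the claim for complexes of the form $\LDER\alpha^*M_0$ with $M_0$ on the noetherian model. An arbitrary pseudo-coherent complex on $\tilde X$, such as $\RDER i_{X,*}M$ for an $S$-pseudo-coherent $M$ on $X$, need not be pulled back from $X_0$ or from any noetherian stage. Base change alone therefore cannot finish the proof.

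\textbf{The missing idea: descend finite truncations, not complexes.}

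After a shift, take $M$ connective pseudo-coherent on $\tilde X$.
\begin{enumerate}
\item Tameness gives a uniform bound $d$, stable under base change (Proposition~\ref{prop:relative-compactness-properties}\ref{prop:relative-compactness-properties:bound},\ref{prop:relative-compactness-properties:base-change}), with $\tau^{\geq -t}\RDER\tilde f_*M\simeq\tau^{\geq -t}\RDER\tilde f_*(\tau^{\geq -t-d}M)$.
\item The truncation $\tau^{\geq -t-d}M$ is finitely $(t+d)$-presented.
\item Write $A=\varinjlim_\lambda A_\lambda$ with each $A_\lambda$ of finite type over $A_0$. A limit theorem for finitely $n$-presented objects (Theorem~\ref{thm:fp-approx}) descends the truncation to some $M_\lambda$ on the \emph{noetherian} derived stack $X_\lambda=X_0\times^{\RDER}_{S_0}\Spec A_\lambda$. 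Define $Y_\lambda$ similarly.
\item Suppose $\RDER f_{\lambda,*}$ preserves all pseudo-coherents on $X_\lambda$. Then base change along $\Spec A\to\Spec A_\lambda$ shows that $\tau^{\geq -t}\RDER\tilde f_*M$ is $t$-perfect for every $t$.
\item Conclude by Example~\ref{ex:truncation-perfect}.
\end{enumerate}

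For the finiteness statement in (4), your Step~3 does suffice in the proper tame case. This is because $(X_\lambda)_{\cl}\to(Y_\lambda)_{\cl}$ is again proper and tame over a noetherian base. The paper instead proves it by a d\'evissage (Theorem~\ref{thm:noetherian-base-changes-derived}\ref{thm:noetherian-base-changes-derived:qaff-noeth}). It does so because it only assumes cohomological properness of $f_0$, which is not evidently stable under base change.

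\textbf{A separate gap in Step~1.}

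Step~1 never makes $X\to Y$ finitely presented, which you need in order to have a noetherian model $X_0$ at all. The paper embeds $X$ in a finitely presented proper $X'$ with finite diagonal. Because $X'$ and its models need not be tame, it deduces Theorem~\ref{thm:tame-firstprime} from Theorem~\ref{thm:firstprime}. That argument uses compact generators and Thomason localisation rather than $\Orb_{X'}$. Alternatively, you would have to show that tameness spreads out to the approximations.
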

Note that this is new even for schemes, and
completely settles Illusie's conjecture.
A useful corollary, generalizing \cite[Example 2.2(a)]{LNquasiperfect}
to tame stacks, is the following.
\begin{corollary}\label{cor:tame-perfect}
  Let $f \colon X \to Y$ be a proper, tame, and perfect morphism of
  algebraic stacks. If $P$ is perfect, then $\RDER f_*(P)$ is perfect.
\end{corollary}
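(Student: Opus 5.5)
The plan is to deduce the corollary from Theorem \ref{thm:tame-firstprime} applied with $S = Y$, together with a finite Tor-dimension argument. Recall that a complex is perfect if and only if it is pseudo-coherent and locally of finite Tor-amplitude. The question is local on $Y$ in the smooth topology, so we first reduce to the case where $Y$ is affine (or at least quasi-compact). For $f$ proper and tame, formation of $\RDER f_*$ commutes with flat base change on $Y$. We then check the two conditions separately.

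\textbf{Pseudo-coherence.} A perfect morphism is in particular locally of finite presentation, so $X$ is locally of finite type over $S := Y$, and trivially so is $Y$. A perfect complex $P$ on $X$ is pseudo-coherent. Since $f$ is perfect (locally pseudo-coherent), $P$ is also pseudo-coherent relative to $Y$. Theorem \ref{thm:tame-firstprime}, applied to $f$ over $S = Y$, then shows that $\RDER f_* P$ is pseudo-coherent relative to $Y$ over $Y$. This is the same as being pseudo-coherent on $Y$, since the identity is relatively pseudo-coherent.

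\textbf{Finite Tor-amplitude.} The morphism $f$ is perfect, so it has finite Tor-dimension. Hence $P$ has finite Tor-amplitude relative to $Y$, say in $[a,b]$, after restricting to a quasi-compact open, which is allowed because $f$ is proper and so $X$ is quasi-compact over affine $Y$. Tameness makes $\RDER f_*$ have finite cohomological dimension $d$ on quasi-coherent sheaves. It also gives the projection formula
\[ \RDER f_*(P) \otimes^{\LDER}_{\Orb_Y} N \simeq \RDER f_*(P \otimes^{\LDER} \LDER f^* N) \]
for quasi-coherent $N$ on $Y$. If $N$ is a quasi-coherent sheaf, then $P\otimes^{\LDER} \LDER f^*N$ has cohomology in $[a,b]$, so the right side has cohomology in $[a, b+d]$. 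Therefore $\RDER f_*P$ has Tor-amplitude in $[a,b+d]$. Pseudo-coherence together with finite Tor-amplitude gives perfection.

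The main obstacle is to justify the projection formula and the bounded cohomological dimension for tame, non-representable $f$ on general (possibly non-noetherian) stacks. The statement must also be made with unbounded quasi-coherent complexes: $\RDER f_*$ must commute with the relevant colimits and the projection formula must hold. I would obtain both by reducing via tameness to coarse space maps. Tame morphisms are étale-locally on the coarse space quotients by linearly reductive groups, whose invariants are exact. This gives boundedness and the commutation with direct sums, and the projection formula then follows from the standard compact-generation argument.
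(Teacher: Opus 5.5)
Your proposal is correct and follows essentially the same route as the paper: pseudo-coherence from Theorem \ref{thm:tame-firstprime} with $S=Y$ (relative equals absolute because $f$ is pseudo-coherent), and finite Tor-amplitude from the projection formula together with a uniform cohomological bound, which the paper packages as Lemma \ref{lem:commutative-algebra-conditions}. The only difference is how the two standard inputs are justified. You use the local structure of tame stacks, whereas the paper uses that tame implies concentrated, so that Proposition \ref{prop:relative-compactness-properties} supplies both the projection formula and the bound via the same compact-generation argument.
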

In Lemma \ref{lem:commutative-algebra-conditions}, we show that results like Theorem \ref{thm:tame-firstprime} imply, among other things, Corollary \ref{cor:tame-perfect}. Since a proper, concentrated morphism with affine relative stabilizers
is necessarily tame, Theorem \ref{thm:tame-firstprime} and Corollary
\ref{cor:tame-perfect} partially answer a question of Bergh--Schn\"urer \cite[Remark 3.7]{ConservDesc}. We also prove a version for good moduli spaces. 
\begin{theorem}\label{thm:gms}
    Let $f \colon X \to Y$ be a morphism of algebraic stacks with affine diagonal, where $X$ and $Y$ are locally of finite type over an algebraic stack $S$. Assume that $f$ admits a factorization $X \xrightarrow{\pi} X_{\gms} \xrightarrow{f_{\gms}} Y$, where $\pi$ is a relative good moduli space and $f_{\gms}$ is proper and representable. If $Y$ satisfies (FC) (e.g., equicharacteristic), or if $f$ satisfies (PC) or (N) (e.g., tame), then $\RDER f_*$ preserves pseudo-coherent complexes relative to $S$.
\end{theorem}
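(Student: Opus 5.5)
The plan is to write $\RDER f_* = \RDER (f_{\gms})_* \circ \RDER\pi_*$ and treat the two factors separately. For the second factor, $f_{\gms}\colon X_{\gms}\to Y$ is proper and representable, hence tame. Moreover $X_{\gms}$ is locally of finite type over $S$: good moduli spaces of stacks locally of finite type over a base remain locally of finite type (Alper's finiteness results, in the relative form). So Theorem \ref{thm:tame-firstprime} gives that $\RDER(f_{\gms})_*$ preserves pseudo-coherence relative to $S$.

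The whole problem therefore reduces to showing that $\RDER\pi_*$ sends $S$-pseudo-coherent complexes on $X$ to $S$-pseudo-coherent complexes on $X_{\gms}$. Both relative pseudo-coherence and the formation of $\RDER\pi_*$ are smooth-local on $X_{\gms}$, since good moduli spaces are compatible with flat base change. We may therefore assume $X_{\gms}=\Spec A$ and $S$ affine, and then reduce to $S=\Spec \Z[x_1,\dots,x_n]$ or a smooth affine base, so that relative pseudo-coherence becomes absolute pseudo-coherence over a ring which is essentially of finite type over the base.

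Locally, $\pi$ is cohomologically affine, so $\pi_*$ is exact on quasi-coherent sheaves. If $\RDER\pi_*$ has finite cohomological dimension and commutes with direct sums, a standard truncation argument (approximating by bounded pseudo-coherent pieces) reduces the problem to showing that $\pi_*$ sends coherent-type objects to finitely presented ones, degree by degree. I would implement this with the approximation-by-perfects criterion: $M$ is pseudo-coherent iff for every $n$ there are a perfect $P$ and a map $P\to M$ that is an isomorphism on $\tau_{\geq n}$. To use it, one needs to approximate an $S$-pseudo-coherent complex on $X$ by objects of the form $\pi^*(\text{perfect})\otimes V$, with $V$ drawn from a generating set of vector bundles. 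Such bundles exist locally because $X$ has affine diagonal and the good moduli space is locally a quotient $[\Spec B/\mathrm{GL}_n]$ (the local structure theorem of Alper--Hall--Rydh). Applying $\pi_*$ to these objects yields $(B\otimes V)^{\mathrm{GL}_n}$-type modules, whose finiteness over $A$ is the content of Hilbert's finiteness theorem.

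The hypotheses (FC), (PC), (N) enter exactly here, and this is the main obstacle. In mixed characteristic, invariants of a finitely presented module need not be finitely presented over $A$, and more seriously $\RDER\pi_*$ fails to commute with the derived base change needed for the reduction to a smooth base. Under (FC), e.g.\ equicharacteristic, I would use that invariant rings of finitely generated algebras are finitely generated and that $\pi_*$ preserves coherence after base change, in the spirit of Alper's theorem. Under (PC) or (N), which hold for tame $f$, the hypothesis should directly give that $\RDER\pi_*$ commutes with arbitrary base change and preserves pseudo-coherence. The hard part is making the derived base change compatible with the chosen factorization. I would try first to prove that $\RDER\pi_*$ preserves pseudo-coherence after any base change $S'\to S$ from a smooth affine, using the perfect-approximation criterion together with cohomological affineness.
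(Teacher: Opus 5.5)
Your first step is fine: $X_{\gms}$ is locally of finite type over $S$ simply because $f_{\gms}$ is proper, and Theorem \ref{thm:tame-firstprime} applies to $f_{\gms}$. But it does not reduce the difficulty. The claim for $\pi$ is the case $f_{\gms}=\mathrm{id}$ of the theorem itself, and your argument for it has a genuine gap.

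The ``degree by degree'' reduction is false outside the noetherian setting. Although $\mathcal{H}^i(\RDER\pi_*M)\simeq\pi_*\mathcal{H}^i(M)$, pseudo-coherence over a non-coherent ring is not detected by finiteness of cohomology. The perfect complex $R\xrightarrow{x}R$ can have $\H^{-1}=\mathrm{Ann}(x)$ not finitely generated, and $R/(x)$ can be finitely presented without being pseudo-coherent.

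Approximation by perfects is the right criterion. With it, everything hinges on showing that modules such as $(B\otimes V)^{\mathrm{GL}_n}$ are pseudo-coherent relative to an \emph{arbitrary}, possibly non-noetherian, base $S$. Hilbert--Nagata finiteness gives at most finite generation over a noetherian base. Passing from a noetherian model to $S$ is a non-flat base change, and that is exactly the step you leave as ``I would try first to prove\dots''. Nothing in your outline justifies replacing $S$ by a smooth affine base either.

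The missing ingredient is the paper's combination of noetherian approximation with derived base change. After reducing to $Y=\Spec B$, $S=\Spec A$, the paper makes $Y\to S$ and $X\to Y$ finitely presented. It does this via closed immersions into $\A^m_S$ and into an $X'$ with a proper good moduli space, which is harmless by Example \ref{E:relative-pseudo-push}. It then descends $f$ to $f_0\colon X_0\to\Spec B_0$ over a noetherian $A_0\subseteq A$ that \emph{still} admits a proper good moduli space. This descent, not Hilbert finiteness, is where (FC), (PC) or (N) are needed.

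Over $A_0$, $f_0$ is cohomologically proper, by Alper's finiteness result for good moduli spaces over a noetherian base together with properness of $f_{0,\gms}$. Also $\Orb_{X_0}$ is $\Spec B_0$-compact, since $\pi_0$ is cohomologically affine and $f_{0,\gms}$ is representable.

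Theorem \ref{thm:mainprime} then carries this across the non-flat map $A_0\to A$. One computes on the derived fibre product $\tilde X$. There $\RDER\pi_*$ \emph{does} commute with derived base change, because $\pi$ is concentrated (Proposition \ref{prop:relative-compactness-properties}\ref{prop:relative-compactness-properties:base-change}); only the classical fibre product misbehaves, so your stated obstacle is misdiagnosed. The induction on truncations in Theorem \ref{thm:noetherian-base-changes-derived}, fed by finitely $n$-presented approximation (Theorem \ref{thm:fp-approx}), shows that pseudo-coherence survives. One then returns to $X=\tilde X_{\cl}$ via Example \ref{ex:pushforward-closed-rel-perfect}. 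Without some such mechanism, your outline cannot get from noetherian finiteness to pseudo-coherence relative to $S$.
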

The conditions (FC), (PC) and (N) are from \cite[\S7]{MR5068605} and are needed for the noetherian approximation used in the proof.

Even in the noetherian situation, direct extensions of Theorem
\ref{thm:tame-firstprime} to the non-tame situation are impossible
(Example \ref{ex:counterexample-non-tame-pushforward}). Nonetheless,
our methods are flexible enough to prove an appropriate generalization
to the non-tame setting.
\begin{theorem}\label{thm:firstprime}
  Let $f\colon X\to Y$ be a proper morphism with finite diagonal of
  algebraic stacks, where $X$ and $Y$ are locally of finite type over an algebraic stack $S$. If
  $P \in \Dqc(X)$ is $Y$-compact, then 
  $\RDER f_*(P \ltensor_{\Orb_X} -)$ preserves pseudo-coherent complexes relative to $S$.
\end{theorem}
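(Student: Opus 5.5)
The plan is to reduce Theorem \ref{thm:firstprime} to the affine-local, noetherian-approximated situation in which pseudo-coherence relative to $S$ becomes a condition on cohomology together with finite Tor-amplitude-type bounds. We then deduce the general case from the coherent case using $Y$-compactness of $P$. Since pseudo-coherence relative to $S$ is smooth-local on $S$ and on $Y$, and $\RDER f_*$ commutes with flat base change on $Y$ (as $f$ is quasi-compact and quasi-separated), we may first assume $S=\Spec A$ and $Y=\Spec B$ are affine with $B$ of finite type over $A$. Relative pseudo-coherence can then be tested after a closed immersion into $\A^n_A$. This reduces us to showing that $\RDER f_*(P\ltensor M)$ is pseudo-coherent on $Y$ whenever $M$ is pseudo-coherent relative to $S$ on $X$. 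Here we use that for $Y\to S$ of finite presentation-type over a smooth ambient, relative pseudo-coherence on $Y$ agrees with pseudo-coherence up to the embedding.

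The core step is the absolute statement: for $f$ proper with finite diagonal and $P$ $Y$-compact, $\RDER f_*(P\ltensor -)$ sends pseudo-coherent complexes on $X$ (in the sense relative to $Y$) to pseudo-coherent complexes on $Y$. We argue by derived methods. A complex is pseudo-coherent if and only if each truncation $\tau^{\geq -n}$ is approximated by perfect complexes. We realise $M$ as a colimit or limit of perfect complexes on a derived thickening: using the derived setting, choose a derived algebraic stack $\mathcal X$ over a polynomial/noetherian base, obtained by absolute noetherian approximation of $f$ (possible because $f$ has finite diagonal, so we are in the range of \cite{MR5068605}), such that $M$ is, up to each truncation, the pullback of a coherent complex. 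For noetherian $X_0\to Y_0$ proper with finite diagonal, $Y_0$-compactness of $P_0$ means $\RDER f_{0*}(P_0\ltensor -)$ has finite cohomological dimension and commutes with colimits. We combine this with the classical finiteness of coherent pushforward for proper stacks with finite diagonal (Keel--Mori coarse space plus Grothendieck/Faltings--Olsson finiteness). This gives coherence of the cohomology of $\RDER f_{0*}(P_0\ltensor M_0)$ for coherent $M_0$, with uniform bounds.

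We then pass back to $Y$. Base change along $Y\to Y_0$ holds for $\RDER f_*(P\ltensor -)$ by the tor-independent or derived base change formula for concentrated morphisms. Note that finite diagonal need not give concentration in positive characteristic, but $Y$-compactness of $P$ supplies exactly the needed finite cohomological dimension of $\RDER f_*(P\ltensor-)$. Pseudo-coherence is preserved under derived pullback, so pseudo-coherence over $Y_0$ implies it over $Y$. The uniform amplitude bound from $Y$-compactness is what lets us pass from truncations to the full bounded-above complex: $\tau^{\geq -n}$ of the output depends only on $\tau^{\geq -n-d}M$ for a fixed $d$.

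The main obstacle I expect is the approximation step: arranging that $P$ and (truncations of) $M$ descend compatibly to a noetherian model while $Y$-compactness of $P$ is preserved, since compactness relative to $Y$ is not obviously stable under such limits. To handle this, I would first characterise $Y$-compactness for proper finite-diagonal $f$ via perfectness plus a support/tame-locus condition. Alternatively, I would use that $P\ltensor -$ is a retract of something built from $\pi^*$ of perfects on the coarse space. Failing that, I would work directly in the derived category and use a compact generation argument for $\Dqc(X)$ relative to $Y$, so that it suffices to check $M$ perfect, where $\RDER f_*(P\ltensor M)$ is perfect by compactness.
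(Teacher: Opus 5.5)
There is a genuine gap, and it sits exactly at the ``main obstacle'' you flag. It has two parts. First, the treatment of $M$. A single noetherian model $X_0\to Y_0$ of $f$ cannot carry the truncations of an arbitrary $M$. For example, over $A=\Z[x_1,x_2,\ldots]$ the pseudo-coherent complex $\bigoplus_n A/(x_n)[n]$ is not defined over any finitely generated subring. So the models must depend on $M$ and on the truncation degree, and you give no mechanism for producing them. Truncated pseudo-coherent complexes are not of finite presentation in any classical category, so classical limit theorems do not help. Moreover, base change along the non-flat map $Y\to Y_0$ computes the pushforward on a derived fiber product $\tilde X=X_0\times^{\RDER}_{S_0}S$, of which $X$ is only the classical truncation. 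Hence ``pseudo-coherence over $Y_0$ implies it over $Y$'' only covers objects derived-pulled back from the model. The paper handles this in three steps. It pushes $M$ forward along $X=\tilde X_{\cl}\hookrightarrow\tilde X$; this is harmless by Example \ref{ex:pushforward-closed-rel-perfect}, and on $\tilde X$ relative and absolute pseudo-coherence agree. It replaces $M$ by $\tau^{\geq -t-d}M$ using Proposition \ref{prop:relative-compactness-properties}\ref{prop:relative-compactness-properties:bound}. It then descends this finitely $(t+d)$-presented object to a derived base change $X_\lambda$ of $X_0$ along $\Spec B_\lambda\to Y_0=\Spec B_0$, where $B_\lambda$ is a finitely generated $B_0$-algebra; this descent is the derived limit theorem, Theorem \ref{thm:fp-approx}. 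With that in place, your appeal to classical finiteness does work. $X_{\lambda,\cl}$ is proper with finite diagonal over the noetherian $\Spec B_\lambda$, so $f_\lambda$ is cohomologically proper and Lemma \ref{lem:compact-pushforward} applies. The paper instead runs the d\'evissage of Theorem \ref{thm:noetherian-base-changes-derived}\ref{thm:noetherian-base-changes-derived:qaff-noeth}, because it treats arbitrary cohomologically proper $f_0$.

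Second, the descent of $P$ is left open, and the one concrete fallback you offer fails. Compact generation in the variable $M$ cannot reduce the claim to perfect $M$: pseudo-coherent objects are not built from perfect ones by finitely many operations, and pseudo-coherence is not stable under colimits. Also, $\RDER f_*$ of a compact object need not be perfect for proper $f$; take the closed immersion $\Spec k\hookrightarrow\Spec k[\epsilon]$. This is why Corollary \ref{cor:tame-perfect} assumes $f$ perfect. The missing idea is to use compact generation in the variable $P$ instead; after reducing to affine $Y$, $P$ is simply compact.
\begin{itemize}
\item The class of compact $Q$ for which $\RDER f_*(Q\ltensor_{\Orb_X}-)$ preserves relative pseudo-coherence is thick.
\item $X_0$ has finite diagonal, so $\Dqc(X_0)$ has a compact generator $Q_0$ \cite[Theorem A]{perfect_stax}.
\item Its pullback along the affine morphism $\alpha\colon X\to X_0$ compactly generates $\Dqc(X)$ \cite[Lemma 8.2]{perfect_stax}.
\item By Thomason's theorem, $P$ lies in the thick closure of $\LDER\alpha^*Q_0$, which descends by construction.
\end{itemize}
The same device is needed for a reduction you omit. $f$ is only of finite type, so before approximating $f$ at all you must embed $X\hookrightarrow X'$ with $X'\to Y$ finitely presented, proper, and with finite diagonal. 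You then use $\RDER f_*(\LDER\imath^*P'\ltensor_{\Orb_X}-)\simeq\RDER f'_*(P'\ltensor_{\Orb_{X'}}\RDER\imath_*(-))$ for a compact generator $P'$ of $\Dqc(X')$.
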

Roughly speaking, $Y$-compactness (Definition \ref{D:concentrated}) is a relative version of compactness
in the unbounded derived category that is naturally amenable to base change. If $f \colon X \to Y$ is tame, then
every perfect complex on $X$ is $Y$-compact (Example
\ref{ex:concentrated-rel-compact}), and so Theorem
\ref{thm:firstprime} immediately implies Theorem
\ref{thm:tame-firstprime} (with
$P=\Orb_X$).
\subsection{Methods}
Kiehl's argument in \cite{Kiehl1972} uses noetherian approximation. To
circumvent the failure of base-change for non-flat morphisms, he uses cohomological descent. A na\"ive
attempt to adapt the argument to the setting of algebraic stacks runs
into a host of difficulties, however. These stem from the unboundedness of the
normalised \v{C}ech complex associated to a smooth or \'etale
cover. Thus, the argument does not appear to extend even to
classifying stacks.
    
If one instead uses derived algebraic
geometry to get around the base-change issues, the
difficulties disappear. In fact, Theorems \ref{thm:gms} and \ref{thm:firstprime} are a
consequence of a more general result in derived algebraic geometry
(Theorem \ref{thm:noetherian-base-changes-derived}), which, in
essence, is that concentrated and finite type
morphisms of noetherian (derived) algebraic stacks that are {\em
  cohomologically proper} remain so after base change. Theorems \ref{thm:gms} and \ref{thm:firstprime} then follow from noetherian approximation \cite{rydh2023absolutenoetherianapproximationalgebraic} and the following underived consequence of Theorem \ref{thm:noetherian-base-changes-derived}.
\begin{theorem}\label{thm:mainprime}
  Let $f_0 \colon X_0 \to Y_0$ be a morphism of algebraic stacks,
  where $X_0$ and $Y_0$ are of finite type over a noetherian algebraic
  stack $S_0$. Assume that for every coherent $\Orb_{X_0}$-module
  $M_0$, $\RDER^if_{0,*}M_0$ is a coherent $\Orb_{Y_0}$-module for all
  $i\geq 0$. Let $a \colon S \to S_0$ be a morphism and let
  $f \colon X \to Y$ be the base change of $f_0$ along $a$ and
  $\alpha \colon X \to X_0$ the projection. If $P_0$ is $Y_0$-compact
  and $Y_0$ has quasi-affine diagonal, then
  $\RDER f_*(\LDER\alpha^*P_0 \ltensor_{\Orb_X} - )$ preserves
  complexes that are pseudo-coherent relative to $S$.
\end{theorem}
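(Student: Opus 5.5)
The plan is to pass through derived algebraic geometry and deduce the statement from Theorem \ref{thm:noetherian-base-changes-derived}. We first reduce to the case where $S$, and hence $X$ and $Y$, are quasi-compact, and then to the case where $S=\Spec A$ is affine with a smooth cover. Pseudo-coherence relative to $S$ is local on $Y$ in the smooth topology, and on $S$ as well. Flat base change for $\RDER f_*$ holds because $f_0$ is concentrated ($Y_0$-compact objects and the coherence hypothesis force this for quasi-compact quasi-separated morphisms of noetherian stacks). These two facts let us replace $S$ by an affine smooth cover of $S_0$ pulled back, and $Y$ by an affine scheme $\Spec B$ smooth over $Y$. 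After these reductions, $Y$ is affine and of finite type over $A$, and relative pseudo-coherence means pseudo-coherence after embedding $Y$ into an affine space $\A^n_A$, or equivalently pseudo-coherence as an $A[x_1,\dots,x_n]$-module.

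Next we recast the hypothesis derivedly. The condition that $\RDER^i f_{0,*}$ preserves coherence for all $i$, together with concentratedness, says that $f_0$ is cohomologically proper. We then form the \emph{derived} base change $\mathbf{X} = X_0 \times^{\mathbf{R}}_{S_0} S$ over $\mathbf{Y}= Y_0\times^{\mathbf R}_{S_0} S$. Theorem \ref{thm:noetherian-base-changes-derived} says that cohomological properness is stable under arbitrary (derived) base change, in the form we need: pushforward along $\mathbf{f}\colon \mathbf X\to\mathbf Y$ of $\alpha^*P_0\otimes(-)$ sends almost perfect objects to almost perfect objects. The quasi-affine diagonal of $Y_0$ and the $Y_0$-compactness of $P_0$ enter here. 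Compactness relative to $Y_0$ is preserved under pullback, so $\LDER\alpha^*P_0$ is $\mathbf Y$-compact. The quasi-affine diagonal ensures the projection formula and base change identities hold in $\Dqc$ of the derived stacks.

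Finally we compare the derived and classical pictures. The classical truncations of $\mathbf X$ and $\mathbf Y$ are $X$ and $Y$, via closed immersions $i\colon X\to\mathbf X$ and $j\colon Y\to \mathbf Y$. Let $M$ be pseudo-coherent relative to $S$ on $X$. We would show that $i_*M$ is almost perfect on $\mathbf X$ relative to $S$. For this, embed locally into $\A^n_{\mathbf X}$-type charts and note that relative pseudo-coherence is insensitive to nilpotent derived thickenings over the same base $S$, since both are measured over $\A^n_S$. Then $j_*\RDER f_*(\LDER\alpha^*P_0\otimes M)\simeq \mathbf f_*(\LDER\alpha^*P_0\otimes i_*M)$ is almost perfect relative to $S$, and hence so is $\RDER f_*(\ldots)$ relative to $S$.

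The main obstacle is this last comparison step, specifically matching "pseudo-coherent relative to $S$" for classical complexes with almost-perfectness relative to $S$ on the derived base change. The fibre product $\mathbf Y$ is not of finite presentation over $S$ in any naive sense. I would handle this by computing everything after pushing forward to $\A^n_S$: there, relative pseudo-coherence becomes absolute pseudo-coherence over a polynomial ring over $A$, and closed pushforwards are exact and detect it.
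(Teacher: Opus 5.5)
Your architecture is the paper's. You form $\tilde X = X_0\times^{\RDER}_{S_0} S$ and $\tilde Y = Y_0\times^{\RDER}_{S_0} S$, and apply Theorem \ref{thm:noetherian-base-changes-derived}\ref{thm:noetherian-base-changes-derived:qaff-diag} to $f_0\colon X_0\to Y_0$ and $\tilde Y\to Y_0$, noting $\tilde X\simeq X_0\times^{\RDER}_{Y_0}\tilde Y$. You then transport the result using $i_X\colon X\to\tilde X$, $i_Y\colon Y\to\tilde Y$ and the affine projection formula $\RDER\tilde f_*(\tilde P\ltensor_{\Orb_{\tilde X}}\RDER i_{X,*}M)\simeq\RDER i_{Y,*}\RDER f_*(P\ltensor_{\Orb_X}M)$.

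The gap is exactly at the step you flag. Theorem \ref{thm:noetherian-base-changes-derived} is about \emph{absolute} pseudo-coherence on $\tilde X$ and $\tilde Y$, whereas your input, and the output you want, are pseudo-coherent \emph{relative to $S$}. So ``then $j_*\RDER f_*(\dots)$ is almost perfect relative to $S$'' does not follow from what you have.

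The missing idea is that $\tilde X\to S$ and $\tilde Y\to S$ are almost finitely presented. They are derived base changes of finite type morphisms of noetherian stacks, which are almost finitely presented by Example \ref{ex:noetherian-ap-alg-rel}, and this property is stable under derived base change. Proposition \ref{proposition: rel-abs-agree} then identifies pseudo-coherence relative to $S$ with absolute pseudo-coherence on both $\tilde X$ and $\tilde Y$. Combined with Example \ref{ex:pushforward-closed-rel-perfect} for the $\pi_0$-surjective maps $i_X$, $i_Y$ (and Remark \ref{rem:compare-discrete}), this finishes the proof.

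Your remark that $\tilde Y$ is ``not of finite presentation over $S$ in any naive sense'' points the wrong way. The workaround via $\A^n_S$ does not avoid the issue either. Converting pseudo-coherence as an $\Orb_{\tilde Y}$-module into pseudo-coherence as an $A[x_1,\dots,x_n]$-module requires $\Orb_{\tilde Y}$ itself to be almost perfect over $A[x_1,\dots,x_n]$. That is precisely almost finite presentation of $\tilde Y\to S$, and the same holds on the input side for $\tilde X$.

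Separately, your claim that the hypotheses force $f_0$ to be concentrated is false. Take $B(\Z/2)\to\Spec k$ in characteristic $2$, as in Example \ref{ex:counterexample-non-tame-pushforward}. It satisfies the coherence hypothesis and has nonzero compact objects, for instance the pushforward of $\Orb$ along the finite \'etale cover $\Spec k\to B(\Z/2)$. Yet $\RDER f_{0,*}\Orb_{X_0}$ is unbounded. Handling such non-concentrated $f_0$ is the whole point of twisting by a $Y_0$-compact $P_0$.

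Cohomological properness in the paper involves no concentratedness; it needs only that $f_0$ is almost perfect, which again is Example \ref{ex:noetherian-ap-alg-rel}. The base change you want comes from the $Y_0$-compactness of $P_0$ via Proposition \ref{prop:relative-compactness-properties}\ref{prop:relative-compactness-properties:base-change}, not from $f_0$. Your preliminary reductions are in any case unnecessary, since Theorem \ref{thm:noetherian-base-changes-derived}\ref{thm:noetherian-base-changes-derived:qaff-diag} already allows an arbitrary $\tilde Y\to Y_0$.

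Likewise, the quasi-affine diagonal of $Y_0$ is not there to supply projection or base-change formulas. It makes the charts $\Spec A\to Y_0$ quasi-affine, so that case \ref{thm:noetherian-base-changes-derived:qaff-noeth} applies after noetherian approximation.
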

\subsection{Related results}
If $f$ is pseudo-coherent, representable, and $Y=S$, then Theorem
\ref{thm:tame-firstprime} follows from \cite[Theorem 5.6.0.2]{SAG}. If
$f$ is flat, finitely presented, representable, and $Y=S$, then
Theorem \ref{thm:tame-firstprime} appears in
\cite[\href{https://stacks.math.columbia.edu/tag/0CTN}{Tag
  0CTN}]{stax}. In the preprint
\cite{2012properlocalcompleteintersection}, To\"en proves Theorem
\ref{cor:tame-perfect} in the special case where $f$ is a local
complete intersection morphism of schemes using an idea related to ours, but this extra hypothesis features in a key way. Our main base
change result (Theorem \ref{thm:noetherian-base-changes-derived}) is
similar to \cite[Proposition 2.4.7]{Halpern-Leistner_Preygel_2023},
although neither non-noetherian base changes nor the
non-concentrated case are considered. The idea of reducing the non-noetherian result
to the noetherian result appears in Lurie's thesis
\cite[Proposition 5.5.5]{lurie}, but this method does not seem to
appear in subsequent works. While the connection to Kiehl's theorem is
mentioned in \cite[Remark 5.6.0.3]{SAG}, relative pseudo-coherence is
not discussed---only absolute notions.
\subsection{Organization of the article}
We review pseudo-coherence in
\S\ref{S:pseudocoherence}, almost-perfection in
\S\ref{S:almost-perfection}, and derived algebraic geometry in
Appendices \ref{APP:flatness} and \ref{APP:derived-stacks}. In \S\ref{S:relative-compactness}, we
introduce relative compactness. In the final sections, we prove our
main results in complete generality.
\subsection{Conventions}
For algebraic stacks, we will follow the conventions of
\cite{stax}. For derived algebraic geometry, see Appendices \ref{APP:flatness} and \ref{APP:derived-stacks}, but we roughly follow the conventions
of \cite{BZFN,MR3701352,hansenmann_cllc,khan2026lecturesalgebraicstacks,SAG}. We will
let upper indices and truncations denote cohomological degrees and
lower indices and truncations for homological/homotopy degrees. In particular, for an animated ring
$B$ and $B$-module $M$ we have equivalences of $\pi_0(B)$-modules
$\H^r(M) \simeq \H_{-r}(M) \simeq \pi_{-r}(M)$ and $B$-modules
$\tau^{\leq r}M \simeq \tau_{\geq -r}M$. This convention clashes with
\cite{stax}, where $\tau_{\leq r}$ denotes what we are referring to as
$\tau^{\leq r}$.
\subsection{Acknowledgements}
Both authors were partially supported by the Australian Research
Council DP210103397 and FT210100405.  The first author would like to
thank Siddharth Mathur and Lior Yanovski for some helpful
comments. The second named author would like to thank Marcel Dang,
Dougal Davis, Tianqi Feng, Caleb Ji, Adam Monteleone, Fei Peng, Lior
Yanovski, and Ivan Zelich for several helpful discussions. AI was only
used for proofreading, mathematical feedback, and literature searches
on the completed article.
\section{Classical case}\label{S:pseudocoherence}
Let $B$ be a (classical) ring. Let $\Mod(B)$ denote the abelian
category of $B$-modules and $\D(B)$ its derived category. Recall that
a \emph{perfect complex} of $B$-modules is an object in the derived
category $\D(B)$ obtained from $B$, considered as a complex in degree
zero, by applying finitely many shifts, cones, direct sums and
retracts. Any such complex can be represented by a bounded complex of
finitely generated projective $B$-modules (i.e., a \emph{strictly} perfect
complex).
    
Now let $M$ be a complex of $B$-modules. Let $r\in \Z$. We say
that $M$ is {\em $r$-pseudo-coherent} if there is a morphism
$\alpha \colon P \to M$, where $P$ is perfect, and $\H^i(\alpha)$ is
bijective for all $i>r$ and $\H^r(\alpha)$ is surjective (i.e.,
$\alpha$ is $r$-connected). We say that $M$ is \emph{pseudo-coherent}
if it is $r$-pseudo-coherent for all $r$.
\begin{example}\label{E:noetherian-pcoh}
  If $B$ is noetherian, then $M$ is $r$-pseudo-coherent if and only if
  $\tau^{\geq r}M$ is bounded above with finitely generated cohomology
  \cite[Lemma III.12.3]{Hart}. In this case, we denote by
  $\D^-_{\coh}(B)$ the subcategory of $\D(B)$ consisting of
  pseudo-coherent objects. The standard $t$-structure of $\D(B)$
  restricts to $\D^-_{\coh}(B)$ and
  $\D^-_{\coh}(B)^\heartsuit \simeq \coh(B)$, the abelian category of
  finitely generated $B$-modules.
\end{example}
\begin{example}\label{E:modules-pco}
  If $\tau^{>s}M \simeq 0$, then $M$ is $s$-pseudo-coherent
  if and only if $\H^s(M)$ is a finitely generated
  $B$-module. Similarly, $M$ is $(s-1)$-pseudo-coherent if and only if
  $\H^s(M)$ is a finitely presented $B$-module and $\H^{s-1}(M)$ is a
  finitely generated $B$-module.
\end{example}
There is also a relative version: let $A \to B$ be a map of rings. Let
$M$ be a complex of $B$-modules. Let $r\in \Z$. We say that $M$ is
$r$-pseudo-coherent \emph{relative to $A$} if there is a factorization
$A \to A[x_1,\dots,x_n] \to B$, with the second map
surjective, such that $M$ is $r$-pseudo-coherent as a complex of
$A[x_1,\dots,x_n]$-modules. As before, we say that $M$ is
pseudo-coherent \emph{relative to $A$} if $M$ is $r$-pseudo-coherent
relative to $A$ for all $r$. We also say that $M$ is \emph{perfect
  relative to $A$} if it is pseudo-coherent and of finite
tor-dimension relative to $A$
\cite[\href{https://stacks.math.columbia.edu/tag/0685}{Tag
  0685}]{stax}. We say that $B$ is a \emph{pseudo-coherent} or
\emph{perfect} $A$-algebra if $B$, viewed as a module over itself, is
pseudo-coherent or perfect relative to $A$, respectively.
\begin{example}
  Let $R$ be a ring. Set
  \[
    A = R[t, x_1, x_2,\ldots], B = A/(tx_1, tx_2,\ldots), C = B/tB = A/tA.
  \]
  Then there are surjections
  $A \twoheadrightarrow B \twoheadrightarrow C$. Note that the
  $B$-module $_BC$ is $A$-perfect and thus $A$-pseudo-coherent. Also,
  $_BC$ is finitely presented, and thus $C$ is $(-1)$-pseudo-coherent
  relative to $B$, but is not $(-2)$-pseudo-coherent relative to
  $B$. Also, $_AB$, considered as both an $A$-module and an
  $A$-algebra, is finitely generated but not finitely presented, and
  so is not pseudo-coherent. In particular, relative pseudo-coherence
  does not imply absolute pseudo-coherence.
\end{example}
\begin{example}\label{E:noetherian-smooth-pcoh-relative}
  If $A$ and $B$ are noetherian, then $B$ is a pseudo-coherent
  $A$-algebra if and only if it is a finitely generated
  $A$-algebra. Moreover for complexes of $B$-modules, pseudo-coherence
  and pseudo-coherence relative to $A$ are equivalent. More generally,
  if $B$ is a pseudo-coherent $A$-algebra, then pseudo-coherence and
  pseudo-coherence relative to $A$ are equivalent
  \cite[\href{https://stacks.math.columbia.edu/tag/064Z}{Tag
    064Z}]{stax}. Similarly, if $B$ is a perfect $A$-algebra, then
  perfection and relative perfection are equivalent.
\end{example}
\begin{example}\label{E:flat-fp-pcoh}
  If $A \to B$ is flat and finitely presented, then $B$ is a
  pseudo-coherent $A$-algebra
  \cite[\href{https://stacks.math.columbia.edu/tag/0695}{Tag
    0695}]{stax}.
\end{example}
The following is a special case of
\cite[\href{https://stacks.math.columbia.edu/tag/0673}{Tag
  0673}]{stax}, which we will revisit in the derived context with
Corollary \ref{corollary:pushforward}.
\begin{example}\label{E:relative-pseudo-push}
  Let $A \to B \to C$ be ring homomorphisms such that $B$ is a
  finitely generated $A$-algebra and $B \to C$ is surjective. Let
  $r\in \Z$. Let $M$ be a complex of $C$-modules. Then $M$ is
  $r$-pseudo-coherent relative to $A$ if and only if $M$ is
  $r$-pseudo-coherent relative to $A$ as a complex of $B$-modules. In
  particular, $M$ is pseudo-coherent relative to $A$ as a $C$-module
  if and only if it is pseudo-coherent relative to $A$ as a
  $B$-module. This is trivial: choose a surjection
  $A[x_1,\dots,x_n] \twoheadrightarrow B \twoheadrightarrow C$. Then
  the complex of $C$-modules $M$ is $r$-pseudo-coherent as a complex
  of $A[x_1,\dots,x_n]$-modules if and only if it is
  $r$-pseudo-coherent relative to $A$ as a complex of $C$-modules if
  and only if it is $r$-pseudo-coherent relative to $A$ as a complex
  of $B$-modules.
\end{example}
Since pseudo-coherence, relative pseudo-coherence, and perfection are
all smooth local 
\cite[\href{https://stacks.math.columbia.edu/tag/0CSM}{Tag
  0CSM}]{stax}, these notions extend naturally to the derived category of quasi-coherent
sheaves on an algebraic stack and their morphisms.
\section{Deriving}\label{S:almost-perfection}
In this section, we describe the derived analog of \S\ref{S:pseudocoherence}. 
Let $B$ be an animated ring.  A $B$-module is \emph{perfect} if it is obtained by applying finitely
many shifts, cones, direct sums and retracts to $B$; equivalently, it
is a compact object of $\Ds(B)$, the stable $\infty$-category of $B$-modules. 
Perfect modules are always \emph{almost
  connective} (i.e. bounded above).

Let $r\in \Z$. A $B$-module $M$ is \emph{$r$-perfect} if there is a
morphism $\alpha \colon P \to M$, where $P$ is perfect, 
$\H^i(\alpha)$ is an isomorphism for $i > -r$, and $\H^{-r}(\alpha)$ is
surjective. We say that $M$ is
\emph{pseudo-coherent} if it is $r$-perfect for all $r$ (some authors
also call this \emph{almost perfect}). Note that $r$-perfect and
pseudo-coherent modules are almost connective as well.
\begin{example}\label{ex:pcoh-aperf-compare-discrete}
  Let $B$ be a discrete animated ring. Then a $B$-module $M$ is
  $r$-perfect if and only if $M$, viewed as an object of
  $\D(\pi_0(B))$, is $(-r)$-pseudo-coherent. In particular, $M$ is
pseudo-coherent if and only if $M$ is a pseudo-coherent object of
  $\D(\pi_0(B))$.
\end{example}
\begin{example}\label{ex:aperf-nilp}
  Let $A \to A'$ be a map of animated rings such that the induced ring
  homomorphism $\pi_0(A) \to \pi_0(A')$ is surjective with nilpotent
  kernel $I$ (e.g., $A'=\pi_0(A)$). Let $M$ be an almost connective
  $A$-module.  Then $M$ is an $r$-perfect (resp.~perfect) $A$-module if and only if
  $M \ltensor_A A'$ is an $r$-perfect (resp.~perfect) $A'$-module \cite[Proposition
  2.7.3.2(a),(d)]{SAG}. Similarly,
  $M \ltensor_A A' \simeq 0$ if and only if $M \simeq 0$.
\end{example}
\begin{example}\label{ex:truncation-perfect}
  Let $B$ be an animated ring and $r\in \Z$. Let $M$ be a
  $B$-module. Then $M$ is $r$-perfect if and only if $\tau^{\geq -r}M$
  is $r$-perfect. The necessity is trivial. For the sufficiency: since
  $\tau^{\geq -r}M$ is $r$-perfect, it is almost connective. Hence,
  there is an $m\geq -r$ such that $\tau^{>m}M \simeq 0$. Then
  \cite[Corollary 2.7.2.2]{SAG} implies that there exists a morphism
  $\alpha\colon P \to \tau^{\geq -r}M$, where $P$ is perfect of
  Tor-amplitude $[-r,m]$, $\H^i(\alpha)$ is an isomorphism for $i>-r$,
  and $\H^{-r}(\alpha)$ is surjective. The obstruction to lifting
  $\alpha$ along $M \to \tau^{\geq -r}M$ belongs to
  $\Hom_B(P,\tau^{<-r}M[1])$. Since $P$ is perfect,
  $\Hom_B(P,\tau^{<-r}M[1]) \simeq \H^1(P^\vee \ltensor_B
  \tau^{<-r}M)$. But $P^\vee$ has Tor-amplitude in $[-m,r]$, so
  $\tau^{>0}(P^\vee \ltensor_B \tau^{<-r}M) \simeq 0$, which gives the
  claim.
\end{example}
\begin{example}\label{ex:noetherian-ap}
  Let $B$ be \emph{noetherian}; that is, $\pi_0(B)$ is a (classical)
  noetherian ring and the $\pi_i(B)$ are
  finitely generated $\pi_0(B)$-modules for all $i>0$. Then a $B$-module
  $M$ is $r$-perfect if and only if it is almost connective and
  $\H^{i}(M)$ is a finitely generated $\pi_0(B)$-module for all
  $i\geq -r$. We let $\Ds_{\coh}^-(B)$ denote the subcategory of
  $\Ds(B)$ spanned by the pseudo-coherent $B$-modules. As in the
  classical case, the standard $t$-structure restricts to
  $\Ds_{\coh}^-(B)$ and there is an equivalence of abelian categories
  $\Ds_{\coh}^-(B)^\heartsuit \simeq \coh(\pi_0(B))$.
\end{example}
There is also a relative version of pseudo-coherence: let $A\to B$ be
a map of animated rings. Let $M$ be a $B$-module. We say $M$ is
{$r$-perfect} \emph{relative to $A$} if there is a factorization
$A \to A[x_1,\dots ,x_n] \to B$, with the second map surjective on
$\pi_0$, such that $M$ is $r$-perfect as a
$A[x_1,\dots,x_n]$-module. We say that $M$ is almost
perfect \emph{relative to $A$} if it is $r$-perfect relative to $A$
for all $r\in \Z$. We say the ring map $A \to B$ is \emph{almost
  finitely presented} if $B$ is almost perfect relative to $A$. These
are stable under (derived) base-change.
\begin{remark}\label{rem:well-defined}
  Note that if $A[x_1,\dots,x_n] \to B$ is some map that is surjective
  on $\pi_0$ such that $M$ is an $r$-perfect
  $A[x_1,\dots,x_n]$-module, then $M$ is an $r$-perfect
  $A[y_1,\dots,y_m]$-module for any $A[y_1,\dots,y_m] \to B$ that is
  surjective on $\pi_0$. This follows from a similar argument to
  \cite[\href{https://stacks.math.columbia.edu/tag/065H}{Tag
    065H}]{stax}.
\end{remark}
\begin{remark}\label{rem:compare-discrete}
  Let $A \to B$ be a map of discrete animated rings. Let
  $r\in \Z$. Then the equivalence
  $\mathrm{Ho}(\Ds(B)) \simeq \D(\pi_0(B))$ sends $r$-perfect modules
  relative to $A$ to $(-r)$-pseudo-coherent complexes
  relative to $A$. This follows from Example
  \ref{ex:pcoh-aperf-compare-discrete}.
\end{remark}
The following example is key for our main results (cf.~Corollary \ref{corollary:pushforward}).
\begin{example}\label{ex:pushforward-closed-rel-perfect}
  Let $A \to B \to C$ be morphisms of animated rings such that
  $\pi_0(A) \to \pi_0(B)$ is of finite type and
  $\pi_0(B) \to \pi_0(C)$ is surjective. Let $M$ be a $C$-module. Then
  $M$ is $r$-perfect relative to $A$ as a $C$-module if and only if it
  is so as a $B$-module. 
  Indeed, choose $b_1$, $\dots$,
  $b_n \in \pi_0(B)$ such that the induced map
  $A[\underline{X}] \to B$, where $\underline{X} = (X_1,\dots, X_n)$
  induces surjections
  $\pi_0(A[\underline{X}]) \simeq \pi_0(A)[\underline{X}]
  \twoheadrightarrow \pi_0(B) \twoheadrightarrow \pi_0(C)$. By Remark
  \ref{rem:well-defined}, the $C$-module $M$ is an $r$-perfect
  $A[\underline{X}]$-module if and only if the $B$-module $M$ is an
  $r$-perfect $A[\underline{X}]$-module. That is, $M$, as a
  $B$-module, is $r$-perfect relative to $A$ if and only if it is so
  as a $C$-module.
\end{example}
\begin{example}\label{ex:noetherian-ap-alg-rel}
  Let $A \to B$ be a finite type homomorphism of noetherian classical
  rings. Then $B$ is an almost perfect $A$-algebra. More generally, if
  $A \to B$ is a map of animated rings, where $A$ and $B$ are
  noetherian and $\pi_0(B)$ is a finite type $\pi_0(A)$-algebra, then
  $A \to B$ is almost perfect. In particular, if $C$ is an animated
  $A$-algebra, then $C \ltensor_A B$ is an almost perfect $C$-algebra.
\end{example}
The following example generalizes Example \ref{ex:aperf-nilp} to the relative setting.
\begin{example}\label{ex:relperf-nilp}
  Let $A \to B$ be a map of animated rings and $r \in \Z$. Let $M$ be
  an almost connective $B$-module. Let $A \to A'$ be a map of animated
  rings such that the induced ring homomorphism
  $\pi_0(A) \to \pi_0(A')$ is surjective with nilpotent kernel
  $I$. Then $M$ is $r$-perfect relative to $A$ if and only if
  $M\ltensor_A A'$ is $r$-perfect relative to $A'$.  
  It is clear that 
  if $M$ is $r$-perfect relative to $A$, then $M \ltensor_A A'$ is
  $r$-perfect relative to $A'$. Conversely, if $M \ltensor_A A'$ is
  $r$-perfect relative to $A'$, then it follows that
  $\pi_0(A)/I\simeq \pi_0(A') \to \pi_0(B\ltensor_A A')\simeq \pi_0(B)
  \otimes_{\pi_0(A)} \pi_0(A') \simeq \pi_0(B)/I\pi_0(B)$ is of finite
  type. Since $I$ is nilpotent, it follows that
  $\pi_0(A) \to \pi_0(B)$ is of finite type
  \cite[\href{https://stacks.math.columbia.edu/tag/0G8U}{Tag
   0G8U}]{stax}. Now choose a presentation $A[x_1,\dots,x_n] \to B$
  that induces a surjection on $\pi_0$. Since $M \ltensor_A A'$ is
  $r$-perfect relative to $A'$, it is an $r$-perfect
  $A'[x_1,\dots,x_n]$-module (Remark \ref{rem:well-defined}). It
  follows from Example \ref{ex:aperf-nilp} that $M$ is an $r$-perfect
  $A[x_1,\dots,x_n]$-module, so $M$ is $r$-perfect relative to $A$.
\end{example}
The following proposition is analogous to Example
\ref{E:noetherian-smooth-pcoh-relative}, and the proof is essentially
the same as \cite[\href{https://stacks.math.columbia.edu/tag/064Z}{Tag
  064Z}]{stax}.
\begin{proposition}\label{proposition: rel-abs-agree}
  Let $A \to B$ be an almost finitely presented map of animated rings and
  $r\in \Z$. Let $M$ be a $B$-module. Then $M$ is $r$-perfect if and
  only if it is $r$-perfect relative to $A$, and similarly for pseudo-coherence.
\end{proposition}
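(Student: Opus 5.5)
We reduce to a change-of-rings statement for a single map of animated rings. Since $A \to B$ is almost finitely presented, $\pi_0(A)\to\pi_0(B)$ is of finite type. We therefore choose $A[\underline{X}] = A[x_1,\dots,x_n] \to B$ inducing a surjection on $\pi_0$. By definition and Remark \ref{rem:well-defined}, $M$ is $r$-perfect relative to $A$ if and only if $M$, viewed as an $A[\underline{X}]$-module via restriction along $\phi\colon A[\underline{X}]\to B$, is $r$-perfect. So it suffices to show: for a map $\phi\colon C \to B$ of animated rings that is surjective on $\pi_0$ and with $B$ a pseudo-coherent $C$-module, a $B$-module $M$ is $r$-perfect over $B$ if and only if it is $r$-perfect over $C$. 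Here we apply this with $C = A[\underline{X}]$, which is flat over $A$. That $B$ is pseudo-coherent over $C$ follows from almost finite presentation and Remark \ref{rem:well-defined}, since $B$ itself is almost perfect relative to $A$. The pseudo-coherence statement then follows by quantifying over all $r$.

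For the direction ``$r$-perfect over $B$ implies $r$-perfect over $C$'', we first note that $B$ is pseudo-coherent over $C$, hence every perfect $B$-module is pseudo-coherent over $C$, since pseudo-coherent modules form a stable subcategory closed under retracts. Given $\alpha\colon P\to M$ over $B$ that is an isomorphism on $\H^i$ for $i>-r$ and surjective on $\H^{-r}$, the fiber $F$ of $\alpha$ is concentrated in degrees $\le -r$, i.e.\ $\tau^{> -r}F\simeq 0$. Then $M$ sits in a cofiber sequence $P \to M \to F[1]$ with $P$ pseudo-coherent over $C$ and $F[1]$ $(r+1)$-connective in homological terms. An $r$-perfect approximation of $P$ over $C$ therefore gives an $r$-perfect approximation of $M$ over $C$. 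This uses the standard fact that $r$-perfect modules are closed under extensions and that a module $N$ with $\tau^{\geq -r}N\simeq 0$ is trivially $r$-perfect (take $0\to N$). In fact it is cleaner to argue via Example \ref{ex:truncation-perfect}: $\tau^{\ge -r}M\simeq\tau^{\ge -r}P$ up to the surjectivity in degree $-r$, so we reduce to the two-out-of-three property for $r$-perfectness in cofiber sequences, which is \cite[Proposition 2.7.2.1]{SAG}-style.

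For the converse, suppose $M$ is $r$-perfect over $C$. We build the approximation over $B$ by descending induction on degrees, as in \cite[Tag 064Z]{stax}. Since $M$ is almost connective, pick $m$ with $\tau^{>m}M\simeq 0$. $\H^m(M)$ is a finitely generated $\pi_0(C)$-module, hence a finitely generated $\pi_0(B)$-module because $\pi_0(C)\to\pi_0(B)$ is surjective. Choosing a finite free $B$-module $B^{k}[-m]\to M$ that is surjective on $\H^m$, the cofiber $M'$ satisfies $\tau^{>m}M' \simeq 0$. By the first direction (free $B$-modules are pseudo-coherent over $C$) and two-out-of-three, $M'$ is again $r$-perfect over $C$, now with $\H^m$ replaced by a quotient coming from the kernel. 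Iterating lowers the top degree, and after finitely many steps (down to degree $-r$) we obtain a perfect $B$-module $P$ with a map to $M$ that is $r$-connected in the required sense. The argument is the usual cell-attachment construction, which is legitimate because finite generation over $\pi_0(C)$ of the relevant cohomology of the successive cofibers is preserved by the two-out-of-three property.

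The main obstacle is the bookkeeping in this converse induction: ensuring that at each step the top cohomology of the new cofiber is finitely generated over $\pi_0(C)$. This requires the $(r')$-perfectness of the new cofiber for the appropriate shifted index, which is exactly where the pseudo-coherence of $B$ over $C$ enters. I would first try to deduce it directly from the two-out-of-three property for the sequence $B^k[-m]\to M\to M'$.
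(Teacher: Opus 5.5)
Your proposal is correct and follows the paper's route: reduce, via a presentation $A[\underline{X}]\to B$ that is surjective on $\pi_0$ together with Remark \ref{rem:well-defined}, to a change-of-rings statement for $C=A[\underline{X}]\to B$ with $B$ pseudo-coherent over $C$. The only difference is that the paper cites \cite[Lemma 5.6.1.2]{SAG} for the change-of-rings step, whereas you prove it by hand: composing an $r$-perfect approximation of $P$ over $C$ with $\alpha$ gives one direction, and cell attachment with the same index $r$, using that cofibers of pseudo-coherent-to-$r$-perfect maps stay $r$-perfect, gives the other. This works; your aside claiming $\tau^{\geq -r}M\simeq\tau^{\geq -r}P$ is inaccurate, but it is not needed.
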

\begin{proof}
  Replacing $A$ with a polynomial ring such that
  $\pi_0(A) \to \pi_0(B)$ is surjective, we may assume $B$ is
  pseudo-coherent as an $A$-module. Now apply \cite[Lemma
  5.6.1.2]{SAG}.
\end{proof}
A consequence is that we may replace $A[x_1,\dots,x_n]$ with any
almost finitely presented $A$-algebra $B'$ in the definition of relative
pseudo-coherence.
\begin{corollary}\label{cor:aperf-relative-abs}
  Let $A \to B$ be a map of animated rings and $r \in \Z$. If
  $M$ is a $B$-module, then the following conditions are equivalent.
  \begin{enumerate}[label = \normalfont(\roman*)]
  \item \label{cor:aperf-relative-abs:def} $M$ is $r$-perfect relative to $A$.
  \item \label{cor:aperf-relative-abs:exist} There is a factorization
    $A \to B' \to B$ with $B'$ an almost finitely presented
    $A$-algebra, such that $\pi_0(B') \to \pi_0(B)$ is surjective and $M$ is $r$-perfect as a $B'$-module.
  \item \label{cor:aperf-relative-abs:every} For every factorization
    $A \to B' \to B$, where $B'$ is an almost finitely presented
    $A$-algebra such that $\pi_0(B') \to \pi_0(B)$ is surjective, $M$
    is $r$-perfect as a {$B'$-module}.
  \end{enumerate}
\end{corollary}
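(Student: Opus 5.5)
The plan is to prove the cycle of implications \ref{cor:aperf-relative-abs:every} $\Rightarrow$ \ref{cor:aperf-relative-abs:def} $\Rightarrow$ \ref{cor:aperf-relative-abs:exist} $\Rightarrow$ \ref{cor:aperf-relative-abs:every}, reducing everything to Proposition \ref{proposition: rel-abs-agree} and the invariance statement of Remark \ref{rem:well-defined}. A key observation, used twice, is the following. If $B'$ is an almost finitely presented $A$-algebra with $\pi_0(B') \to \pi_0(B)$ surjective, then Proposition \ref{proposition: rel-abs-agree} applied to $A \to B'$ says that a $B'$-module is $r$-perfect if and only if it is $r$-perfect relative to $A$.

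\emph{\ref{cor:aperf-relative-abs:every} $\Rightarrow$ \ref{cor:aperf-relative-abs:def}.} First I check that some admissible factorization exists, so that \ref{cor:aperf-relative-abs:every} has content. In \ref{cor:aperf-relative-abs:def}, and in every situation where we use it, $\pi_0(A) \to \pi_0(B)$ is of finite type. In that case choose $b_1,\dots,b_n \in \pi_0(B)$ generating $\pi_0(B)$ over $\pi_0(A)$ and lift them to a map $A[x_1,\dots,x_n] \to B$. This is possible since $A[x_1,\dots,x_n]$ is free on $n$ generators, so maps out of it correspond to $n$ points of $\pi_0$. The polynomial algebra is almost finitely presented over $A$, being perfect (even free) over itself. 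Hence \ref{cor:aperf-relative-abs:every} gives that $M$ is $r$-perfect over $A[x_1,\dots,x_n]$, which is \ref{cor:aperf-relative-abs:def}.

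I expect the main technical point to be a mild one: if $\pi_0(B)$ is not of finite type over $\pi_0(A)$, then \ref{cor:aperf-relative-abs:every} is vacuous and this implication needs a convention. Every one of the three conditions implicitly forces $\pi_0(A) \to \pi_0(B)$ to be of finite type, so I will state that all three conditions include this, and the argument goes through.

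\emph{\ref{cor:aperf-relative-abs:def} $\Rightarrow$ \ref{cor:aperf-relative-abs:exist}.} This is immediate: take $B' = A[x_1,\dots,x_n]$ from the definition, which is almost finitely presented over $A$.

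\emph{\ref{cor:aperf-relative-abs:exist} $\Rightarrow$ \ref{cor:aperf-relative-abs:every}.} Let $B'$ be as in \ref{cor:aperf-relative-abs:exist}, and let $B''$ be any other almost finitely presented $A$-algebra with $\pi_0(B'') \to \pi_0(B)$ surjective.
\begin{enumerate}
\item By the key observation, $M$ is $r$-perfect relative to $A$ as a $B'$-module.
\item Since $\pi_0(A) \to \pi_0(B')$ is of finite type and $\pi_0(B') \to \pi_0(B)$ is surjective, Example \ref{ex:pushforward-closed-rel-perfect} shows that $M$ is $r$-perfect relative to $A$ as a $B$-module.
\item Applying Example \ref{ex:pushforward-closed-rel-perfect} again, now to $A \to B'' \to B$, shows that $M$ is $r$-perfect relative to $A$ as a $B''$-module.
\item By the key observation for $B''$, $M$ is $r$-perfect as a $B''$-module.
\end{enumerate}
Both applications of Example \ref{ex:pushforward-closed-rel-perfect} are really the invariance under choice of presentation recorded in Remark \ref{rem:well-defined}.
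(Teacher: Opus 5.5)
Your proof is correct and is essentially the paper's argument. The paper proves (i)$\Leftrightarrow$(ii) exactly as you do, taking $B'=A[x_1,\dots,x_n]$ in one direction and combining Proposition \ref{proposition: rel-abs-agree} with Example \ref{ex:pushforward-closed-rel-perfect} in the other. It proves (i)$\Rightarrow$(iii) by choosing a polynomial presentation of $B'$ and applying Remark \ref{rem:well-defined} and then Proposition \ref{proposition: rel-abs-agree}, which is your steps (3)--(4) with the example unwound. Your observation that (iii) holds vacuously unless $\pi_0(A)\to\pi_0(B)$ is of finite type is a genuine point that the paper glosses over when it calls (iii)$\Rightarrow$(ii) trivial, and adding that hypothesis is the right fix.
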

\begin{proof}
 For
 \ref{cor:aperf-relative-abs:def}$\Rightarrow$\ref{cor:aperf-relative-abs:exist}:
 take $B' = A[x_1,\dots ,x_n]$, where $A[x_1,\dots ,x_n]$ is as in
 the definition of relative $r$-perfection. For
 \ref{cor:aperf-relative-abs:exist}$\Rightarrow$\ref{cor:aperf-relative-abs:def}:
 combine Proposition \ref{proposition: rel-abs-agree} with Example
 \ref{ex:pushforward-closed-rel-perfect}.

 The implication
 \ref{cor:aperf-relative-abs:every}$\Rightarrow$\ref{cor:aperf-relative-abs:exist}
 is trivial, so it remains to prove
 \ref{cor:aperf-relative-abs:def}$\Rightarrow$\ref{cor:aperf-relative-abs:every}. Fix
 a factorization $A \to B' \to B$ as in
 \ref{cor:aperf-relative-abs:every}. Now choose $b_1'$, $\dots$,
 $b_n' \in \pi_0(B')$ such that the induced $A$-algebra homomorphism
 $A[x_1,\dots,x_n] \to B'$, where $x_i \mapsto b_i'$ induces a
 surjection $\pi_0(A[x_1,\dots,x_n]) \twoheadrightarrow
 \pi_0(B')$. It follows that the composition
 $\pi_0(A[x_1,\dots,x_n]) \to \pi_0(B') \to \pi_0(B)$ is surjective,
 so $M$ is an $r$-perfect $A[x_1,\dots,x_n]$-module (Remark
 \ref{rem:well-defined}). Then $M$ is $r$-perfect relative to $A$ as
 a $B'$-module and so $M$ is $r$-perfect as a $B'$-module
 (Proposition \ref{proposition: rel-abs-agree}).
\end{proof}
Since perfect, $r$-perfect, pseudo-coherent, and their relative variants are all smooth local on animated rings, these notions naturally extend to derived algebraic stacks and their morphisms. Moreover, these agree with the classical notions discussed in \S\ref{S:pseudocoherence}. 
\section{Relative compactness}\label{S:relative-compactness}
In this section, we will show that a number of results of \cite[\S4.2]{perfect_stax} extend to the derived and relative setting. This is somewhat subtle due to the way we have set up quasi-coherent sheaves. Let $X$ be a quasi-compact and quasi-separated derived algebraic
stack. We say that $P \in \Dsqc(X)$ is \emph{compact} if
$\RDER\Hom_{\Orb_X}(P,-)$ preserves small coproducts (equivalently, small filtered colimits) in $\Dsqc(X)$. Some notation: $\RDER \Gamma(X,N) = \RDER\Hom_{\Orb_X}(\Orb_X,N)$. We begin with the following.
\begin{lemma}\label{lem:strongproj}
  Let $X$ be a quasi-compact and quasi-separated derived stack. Let
  $P \in \Dsqc(X)$ be perfect and $M \in \Dsqc(X)$.
  \begin{enumerate}[label = \normalfont(\roman*)]
  \item\label{lem:strongproj:ideal} If $P$ is compact and $Q$ is perfect, then $P \ltensor_{\Orb_X} Q$ is compact.
  \item\label{lem:strongproj:dual} $P$ is compact if and
   only if $P^\vee$ is compact. In particular, $P$ is compact if and
    only if $\RDER \Gamma(X,P\ltensor_{\Orb_X} - )$ preserves small
    coproducts in $\Dsqc(X)$.
    \item\label{lem:strongproj:fcd} $P$ is compact if and only if there exists $d(P)$ such that for all $F \in \Dsqc(X)^{\heartsuit}$:
    \[
      \tau^{>d(P)}\RDER \Gamma(X,P \ltensor_{\Orb_X} F) \simeq 0.
    \]
  \item \label{lem:strongproj:bound} If $P$ is compact, then there exists a minimal $d(P)\geq 0$ such that for all $n\in \Z$:
   \begin{equation}\label{eqn:truncation_cd}
       \tau^{\geq n}\RDER \Gamma(X,P \ltensor_{\Orb_X} M) \simeq \tau^{\geq n}\RDER\Gamma(X,P \ltensor_{\Orb_X} \tau^{\geq n-d(P)}M).
   \end{equation}
 \end{enumerate}
\end{lemma}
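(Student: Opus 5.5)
We prove the four parts in order, using throughout that for perfect $P$ on $X$ we have the adjunction identity
\[
\RDER\Hom_{\Orb_X}(P,N)\simeq \RDER\Gamma(X,P^\vee\ltensor_{\Orb_X}N)\quad (N \in \Dsqc(X)),
\]
where $P^\vee$ is perfect with $P^{\vee\vee}\simeq P$. This holds smooth-locally, hence globally, since perfect objects are dualizable in $\Dsqc(X)$.

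\textbf{Part \ref{lem:strongproj:ideal}.} We have $\RDER\Hom(P\ltensor Q,-)\simeq \RDER\Hom(P,Q^\vee\ltensor -)$. Tensoring with $Q^\vee$ commutes with small coproducts, so if $\RDER\Hom(P,-)$ commutes with coproducts, so does $\RDER\Hom(P\ltensor Q,-)$.

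\textbf{Part \ref{lem:strongproj:dual}.} Here $\RDER\Hom(P^\vee,-)\simeq\RDER\Gamma(X,P\ltensor -)$ and $\RDER\Hom(P,-)\simeq \RDER\Gamma(X,P^\vee\ltensor-)$. The key intermediate claim is that $P$ is compact if and only if $\RDER\Gamma(X,P\ltensor-)$ preserves coproducts. Granting it, $P$ is compact iff $P^\vee$ is compact, which is the second assertion of \ref{lem:strongproj:dual}, and then the first follows by symmetry. For the claim, note $P\ltensor-$ has right adjoint $P^\vee\ltensor -$ and $\RDER\Gamma(X,P\ltensor N)\simeq \RDER\Hom(P^\vee, N)$. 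Using \ref{lem:strongproj:ideal} with $Q=P^\vee\ltensor P$ does not close the loop directly, so I would instead use that $P$ is a retract of $P\ltensor P^\vee\ltensor P$ (via the evaluation/coevaluation triangle identities). Then compactness of $P$ is equivalent to $\RDER\Hom(P\ltensor P^\vee\ltensor P,-)\simeq\RDER\Hom(P,P^\vee\ltensor P \ltensor P^\vee \ltensor -)$ preserving coproducts. Hence, up to retracts, both compactness conditions reduce to coproduct-preservation of the functor $\RDER\Gamma(X,E\ltensor-)$ for the self-dual object $E=P\ltensor P^\vee$, and are therefore equivalent.

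\textbf{Part \ref{lem:strongproj:fcd}.} For ``if'': a bounded cohomological dimension for $\RDER\Gamma(X,P\ltensor-)$ on the heart, together with $P$ being bounded, gives by a standard way-out argument that $\RDER\Gamma(X,P\ltensor -)$ commutes with coproducts. First take uniformly bounded-below complexes, using Postnikov towers and the fact that the $t$-structure on $\Dsqc$ is right complete and compatible with filtered colimits. Then pass to arbitrary complexes via the truncation estimate of \ref{lem:strongproj:bound}, which already follows from the bound. For ``only if'': if $P$ is compact, write $\RDER\Gamma(X,P\ltensor F)=\RDER\Hom(P^\vee,F)$. Then argue as in \cite[\S4.2]{perfect_stax}: cover $X$ by a smooth affine atlas, use Čech/descent to express $\RDER\Gamma$ as a limit, and use compactness against the coproduct $\bigoplus_n F[n]$ to get uniform vanishing. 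Concretely, if no bound $d$ existed, choose $F_n\in\Dsqc(X)^\heartsuit$ with $\H^{d_n}\neq0$ and $d_n\to\infty$. Then $\bigoplus_n F_n[d_n]$ would be a coproduct whose image has nonzero classes in $\H^0$ in every summand, which is incompatible with compactness since $\RDER\Hom(P^\vee, \prod) $ and $\bigoplus$ must agree in a bounded range. I expect this step to be the main obstacle, since making ``bounded range'' precise requires that $\bigoplus F_n[d_n]\to\prod F_n[d_n]$ is an equivalence in $\Dsqc$ here; that holds because the degrees escape to $-\infty$ and $\Dsqc(X)$ has coproducts $t$-exact.

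\textbf{Part \ref{lem:strongproj:bound}.} Take $d(P)$ minimal with the property of \ref{lem:strongproj:fcd}. The fiber of $M\to\tau^{\geq n-d}M$ is $\tau^{<n-d}M$. Writing this as a colimit of bounded pieces with heart-valued cohomology in degrees $<n-d$, the vanishing of \ref{lem:strongproj:fcd}, preservation of colimits, and an induction on the length give $\tau^{\geq n-1}\RDER\Gamma(X,P\ltensor\tau^{<n-d}M)\simeq 0$. The long exact sequence then yields \eqref{eqn:truncation_cd}.
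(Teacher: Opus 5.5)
Your (i), (ii), and the ``only if'' half of (iii) are fine and follow the paper. For the last, the identification $\bigoplus_n F_n[d_n]\simeq\prod_n F_n[d_n]$ needs left completeness of $\Dsqc(X)$ (Lemma \ref{L:right-and-left-completeness}), not just $t$-exactness of coproducts.

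The genuine gap is in (iv), and it carries over to the ``if'' half of (iii). You propose to write $\tau^{<n-d}M$ as a colimit of bounded pieces, but such presentations do not exist in general. Take $X=\Spec A$ with $A$ an animated ring having infinitely many nonzero homotopy groups. Then $A$ is compact in $\Ds(A)$. If $A$ were a colimit (equivalently, a filtered colimit) of bounded modules, the identity of $A$ would factor through one of them, and $A$ would be bounded. An object that is unbounded below is reached from bounded ones by its Postnikov tower $\varprojlim_r\tau^{\geq -r}$, which is a limit, not a colimit.

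There is a second problem in the ``if'' direction: there $P$ is not yet known to be compact. So even granting such a presentation, you could not commute $\RDER\Gamma(X,P\ltensor_{\Orb_X}-)$ with a colimit whose terms are not uniformly bounded below; that is exactly the compactness being proved. The chain ``bound $\Rightarrow$ (iv) $\Rightarrow$ compact'' is therefore circular as written. Without compactness you only have colimit preservation on $\Dsqc^{\geq m}(X)$ (Example \ref{ex:bounded-below-colimits}).

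The missing ingredient is the paper's limit argument, which uses only the heart bound:
\begin{itemize}
\item Since $P$ is perfect, left completeness gives $\RDER\Gamma(X,P\ltensor_{\Orb_X}M)\simeq\varprojlim_r\RDER\Gamma(X,P\ltensor_{\Orb_X}\tau^{\geq -r}M)$.
\item Use the fiber sequences $\mathcal{H}^{-r}(M)[r]\to\tau^{\geq -r}M\to\tau^{\geq -r+1}M$ together with the vanishing of $\H^j(X,P\ltensor_{\Orb_X}F)$ for $j>d$ and $F$ in the heart. They show that $\H^i(X,P\ltensor_{\Orb_X}\tau^{\geq -r}M)\to\H^i(X,P\ltensor_{\Orb_X}\tau^{\geq -r+1}M)$ is an isomorphism once $r>d-i$.
\item Hence the towers are eventually constant, so the $\varprojlim^1$ term in the Milnor sequence vanishes.
\item Combined with the bounded-below case (hypercohomology spectral sequence, or your induction on length), this yields \eqref{eqn:truncation_cd} for all $M$.
\end{itemize}
Only after this does your way-out argument give the ``if'' half of (iii).

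Finally, the vanishing you need is $\tau^{\geq n}\RDER\Gamma(X,P\ltensor_{\Orb_X}\tau^{<n-d}M)\simeq 0$. Your $\tau^{\geq n-1}$ is off by one and false in general: take $X$ a point, $P=\Orb_X$, $d=0$.
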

\begin{proof}
Claim \ref{lem:strongproj:ideal} is trivial from adjunction. Claim \ref{lem:strongproj:dual} follows from \ref{lem:strongproj:ideal} and that $P$ is a direct summand of $P \ltensor_{\Orb_X} P^\vee \ltensor_{\Orb_X} P$. We now claim that if the cohomological vanishing condition of \ref{lem:strongproj:fcd} holds, then \eqref{eqn:truncation_cd} does too. Indeed, if $M$ is almost coconnective (i.e., bounded below), then \eqref{eqn:truncation_cd} follows from the hypercohomology spectral sequence \cite{antieau2024spectralsequencesdecalagebeilinson}:
  \[
  E_2^{p,q} = \H^p(X, P \ltensor_{\Orb_X} \mathcal{H}^q(M)) \Rightarrow \H^{p+q}(X, P \ltensor_{\Orb_X} M). 
  \] 
  In general, by left completeness (Lemma \ref{L:right-and-left-completeness}), there is an equivalence:
  \[ 
  M \simeq \holim{r}\tau^{\geq -r}M. 
  \]
  Since $P$ is perfect, it follows that
  \[
  \RDER\Gamma(X,P\ltensor_{\Orb_X} M) \simeq \holim{r} \RDER \Gamma(X,P \ltensor_{\Orb_X} \tau^{\geq -r}M).
  \]
  Let us fix $n$, and $i \geq n$. If $r > d(P)-n$, then 
  \[
  \H^i(X, P\ltensor_{\Orb_X} \tau^{\geq -r}M) \simeq \H^i(X, P\ltensor_{\Orb_X} \tau^{\geq n - d(P)}\tau^{\geq -r}M) \simeq \H^i(X, P\ltensor_{\Orb_X} \tau^{\geq n - d(P)}M).
  \] 
  Thus, by the Milnor exact sequence:
  \[
  0 \to \varprojlim_r\!{}^1 \H^{i-1}(X,P\ltensor_{\Orb_X} \tau^{\geq -r}M) \to \H^i(X,P\ltensor_{\Orb_X} M) \to \varprojlim_r \H^i(X,P\ltensor_{\Orb_X} \tau^{\geq -r}M) \to 0
  \]
  it suffices to show that the groups $\H^i(X, P \ltensor_{\Orb_X}\tau^{\geq -r} M)$ stabilize as $r \to \infty$. Now for any $r$, we have a fiber sequence 
  \[
  \mathcal{H}^{-r}(M)[r] \to \tau^{\geq -r}M \to \tau^{\geq -r+1}M. 
  \] 
  Thus, for $r > d(P) - n$, applying $R\Gamma(X, P\ltensor_{\Orb_X}-)$ and taking the long exact sequence, the vanishing of $\H^{i + r}(X, P \ltensor_{\Orb_X}\mathcal{H}^{-r}(M))$ and $\H^{i +1 + r}(X, P \ltensor_{\Orb_X}\mathcal{H}^{-r}(M))$ implies that 
  \[
  \H^i(X, P \ltensor_{\Orb_X}\tau^{\geq -r} M) \simeq \H^i(X, P \ltensor_{\Orb_X}\tau^{\geq -r+1} M).
  \] 
  We now show that \eqref{eqn:truncation_cd} implies that $P$ is compact, which is \ref{lem:strongproj:fcd}. By Example \ref{ex:bounded-below-colimits}, $\RDER \Gamma(X, P \ltensor_{\Orb_X} \tau^{\geq n- d(P)}(-))$ preserves small coproducts. Since $\Ds(\Z)$ is left complete, it now follows from \eqref{eqn:truncation_cd} that $\RDER \Gamma(X,P \ltensor_{\Orb_X} - )$ preserves small coproducts. 
  
  For \ref{lem:strongproj:bound}: it suffices to show that there exists a $d(P)$ satisfying \eqref{eqn:truncation_cd}. Arguing as in \cite[Lemma 4.5(1)$\Rightarrow$(2)]{perfect_stax}, we can find some $d(P)$ such that $\H^i(X, P\ltensor_{\Orb_X} F) = 0$ for all $F\in \Dsqc(X)^\heartsuit$ and $i > d(P)$. Then we have \ref{lem:strongproj:fcd} and so \eqref{eqn:truncation_cd}.
\end{proof}
\begin{remark}\label{rem:classical-compact}
    Let $X$ be a quasi-compact and quasi-separated derived algebraic stack. By Lemma \ref{lem:strongproj}\ref{lem:strongproj:fcd}, if $P \in \Dsqc(X)$ is perfect, then $P$ is compact if and only if its restriction to $X_{\cl}$ is so. In particular, if $X$ is a derived algebraic space, then $\Orb_X$ is compact \cite[\href{https://stacks.math.columbia.edu/tag/073G}{Tag
      073G}]{stax}. More generally, $X_\cl$ has finite cohomological dimension (e.g., tame, or quasi-compact and quasi-separated with affine stabilizers in characteristic $0$ \cite[Theorem C]{classifyingstacks}) if and only if $\Orb_X$ is compact. Hence, every perfect complex is compact if and only if $X_{\cl}$ has finite cohomological dimension.
\end{remark}
\begin{definition}\label{D:concentrated}
    Let $f \colon X \to S$ be a quasi-compact and quasi-separated morphism
of derived algebraic stacks. Let $P \in \Dsqc(X)$. We say that $P$
is \emph{$S$-compact} if for every
smooth morphism $\Spec A \to S$, the restriction of $P$ to
$X_A = X \times^\RDER_{S} \Spec A$ is a compact object of $\Dsqc(X_A)$. If $\Orb_X$ is $S$-compact, then we say that $f$ is \emph{concentrated}.
\end{definition}
This condition is clearly stable under smooth base change on $S$. In particular if $P$ is perfect, then it is $S$-compact if and only if $P^\vee$ is.
\begin{example}\label{ex:concentrated-rel-compact}
  If $f\colon X \to S$ is concentrated, then every perfect complex on $X$ is $S$-compact. Indeed, $P$ is
  perfect and $\Spec A \to S$ is smooth, so $P_{X_A}$ is perfect on
  $X_A$. But $X_A$ has finite cohomological dimension since $f$ is
  concentrated, so $P_{X_A}$ is compact in $\Dsqc(X_A)$ (Remark \ref{rem:classical-compact}). Thus, $P$ is $S$-compact.
\end{example}
\begin{example}\label{ex:concentrated-morphisms}
  Let $f\colon X \to S$ be a quasi-compact and quasi-separated
  morphism of derived algebraic stacks. If $f$ is tame (e.g.,
  representable) or has relatively affine stabilizers and $S$ has
  characteristic $0$, then $f$ is concentrated. This follows from
  Remark \ref{rem:classical-compact}. More generally, $f$ is
  concentrated if and only if $f_{\cl} \colon X_{\cl} \to S_{\cl}$ is
  concentrated as in \cite{perfect_stax}.
\end{example}
\begin{example}\label{ex:abs-rel-compact}
    Let $f \colon X \to \Spec A$ be a quasi-compact and quasi-separated morphism of derived stacks. If $P\in \Dsqc(X)$, then $P$ is $\Spec A$-compact if and only if it is compact. One direction is trivial. For the other, it suffices to prove that in the cartesian square:
    \[
    \begin{tikzcd}
        X' \arrow[r,"\gamma"] \arrow[d,"f'"'] & X  \arrow[d,"f"]\\ \Spec A' \arrow[r,"g"'] & \Spec A,
    \end{tikzcd}
    \]
    that $\LDER \gamma^*P$ is compact. But Example \ref{ex:base-change-affine} shows that $\RDER \gamma_*$ preserves filtered colimits, so $\LDER \gamma^*$ preserves compact objects. 
\end{example}
The following is a relative version of Lemma \ref{lem:strongproj} (cf.~\cite[Proposition 9.1.5.7]{SAG}).
\begin{proposition}\label{prop:relative-compactness-properties}
  Let $f \colon X \to S$ be a quasi-compact and quasi-separated
  morphism of derived algebraic stacks. Let $P \in \Dsqc(X)$ be perfect and $S$-compact and let $M \in \Dsqc(X)$.
  \begin{enumerate}[label = \normalfont(\roman*)]
  \item \label{prop:relative-compactness-properties:ideal} If $Q \in \Dsqc(X)$ is perfect, then $P\ltensor_{\Orb_X} Q$ is $S$-compact.
  \item \label{prop:relative-compactness-properties:sum} The functor $\RDER f_*(P \ltensor_{\Orb_X} - )$ preserves small coproducts.
  \item \label{prop:relative-compactness-properties:projection} If $N \in \Dsqc(S)$, then there is a natural equivalence:
    \[
      \RDER f_*(P \ltensor_{\Orb_X} M) \ltensor_{\Orb_S} N \simeq
      \RDER f_*(P \ltensor_{\Orb_X} M \ltensor_{\Orb_X} \LDER f^*N).
    \]
  \item \label{prop:relative-compactness-properties:bound} If $S$ is
    quasi-compact, then there exists a minimal $d(P,f)\geq 0$ such that  $\forall\,n\in \Z$:
    \[
      \tau^{\geq n} \RDER f_*(P\ltensor_{\Orb_X} M) \simeq \tau^{\geq n}\RDER f_* (P\ltensor_{\Orb_X}\tau^{\geq n-d(P,f)}M).
    \]
    In particular, if $M$ is almost connective,
    then $\RDER f_*(P \ltensor_{\Orb_X} M)$ is almost connective.
  \item \label{prop:relative-compactness-properties:base-change}
    Consider a fiber diagram of derived algebraic stacks:
    \[
      \begin{tikzcd}
        X' \arrow[r, "\gamma"] \arrow[d, "f'"'] & X \arrow[d, "f"] \\
        S' \arrow[r, "g"'] & S.
      \end{tikzcd}
    \]
    Then $\LDER \gamma^*P$ is $S'$-compact and there is a natural
    equivalence:
    \[
      \LDER g^*\RDER f_*(P\ltensor_{\Orb_X} M) \simeq \RDER f'_*\LDER
      \gamma^*(P\ltensor_{\Orb_X} M).
    \]
    Moreover, if $S$ and $S'$ are quasi-compact, then
    $d(\LDER \gamma^*P,f') \leq d(P,f)$.
  \end{enumerate}
\end{proposition}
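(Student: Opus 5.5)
The plan is to reduce everything to the affine case $S=\Spec A$ (with $A$ smooth over $S$), where $S$-compactness of $P$ is just compactness of $P$ in $\Dsqc(X)$ by Definition \ref{D:concentrated}, and then apply Lemma \ref{lem:strongproj}. All the statements are smooth local on $S$: formation of $\RDER f_*$ commutes with flat (in particular smooth) base change on $S$ for quasi-compact quasi-separated $f$. Coproducts, tensor products and truncations in $\Dsqc(S)$ are also detected smooth locally. So for \ref{prop:relative-compactness-properties:ideal}--\ref{prop:relative-compactness-properties:projection} we may assume $S=\Spec A$, and then $P$ is compact.

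\textbf{Parts \ref{prop:relative-compactness-properties:ideal} and \ref{prop:relative-compactness-properties:sum}.} Part \ref{prop:relative-compactness-properties:ideal} is immediate from Lemma \ref{lem:strongproj}\ref{lem:strongproj:ideal} after restricting to each $X_A$. For \ref{prop:relative-compactness-properties:sum}, note that on $\Spec A$ the global sections functor is conservative and $t$-exact on $\Dsqc$. Hence it suffices to show that
\[
\RDER\Gamma(\Spec A,\RDER f_*(P\ltensor_{\Orb_X} -))=\RDER\Gamma(X,P\ltensor_{\Orb_X} -)
\]
preserves coproducts, which is Lemma \ref{lem:strongproj}\ref{lem:strongproj:dual}.

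\textbf{Part \ref{prop:relative-compactness-properties:projection}.} Both sides are exact in $N$, and by \ref{prop:relative-compactness-properties:sum} both commute with small coproducts in $N$. On $\Spec A$, $\Dsqc$ is generated under colimits, shifts and cones by $\Orb_S$. The natural projection map is an equivalence for $N=\Orb_S$, so it is an equivalence for all $N$. We then descend the equivalence back to a general $S$ by smooth locality.

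\textbf{Part \ref{prop:relative-compactness-properties:base-change}, first for $S'\to S$ affine.} Write $S=\Spec A$ and $S'=\Spec A'$. Then $g_*$ identifies $\Dsqc(S')$ with $A'$-modules, and by Example \ref{ex:base-change-affine} we have $\RDER\gamma_*\LDER\gamma^*(-)\simeq (-)\ltensor_{\Orb_X}\LDER f^*g_*\Orb_{S'}$. Applying \ref{prop:relative-compactness-properties:projection} with $N=g_*\Orb_{S'}$ gives
\[
g_*\LDER g^*\RDER f_*(P\ltensor M)\simeq \RDER f_*(P\ltensor M)\ltensor g_*\Orb_{S'}\simeq \RDER f_*(P\ltensor M\ltensor f^*g_*\Orb_{S'})\simeq g_*\RDER f'_*\LDER\gamma^*(P\ltensor M),
\]
and $g_*$ is conservative, which gives the base change equivalence. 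That $\LDER\gamma^*P$ is compact follows as in Example \ref{ex:abs-rel-compact}.

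\textbf{Part \ref{prop:relative-compactness-properties:base-change} in general.} Check $S'$-compactness on smooth charts $\Spec A'\to S'$. Choose a smooth affine cover of $S$ and compose, so that the composite $\Spec A'\to S$ factors through affines after passing to a smooth cover of $\Spec A'$. We then reduce to the affine case and use that compactness descends along the resulting flat affine base change. The base change formula then follows by checking on smooth covers of $S'$.

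\textbf{Part \ref{prop:relative-compactness-properties:bound}.} Cover the quasi-compact $S$ by finitely many smooth affines $\Spec A_i$, and let $d(P,f)$ be the maximum of the $d(P_{X_{A_i}})$ from Lemma \ref{lem:strongproj}\ref{lem:strongproj:bound}. Truncations commute with flat pullback and global sections on affines are $t$-exact, so the truncation identity holds on each chart and hence on $S$. The almost-connectivity claim follows by taking $n$ large with $\tau^{\geq n-d}M\simeq0$. For the inequality $d(\LDER\gamma^*P,f')\le d(P,f)$, I would use the characterisation in Lemma \ref{lem:strongproj}\ref{lem:strongproj:fcd}: the bound is the vanishing of $\H^{>d}$ on hearts. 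Quasi-coherent sheaves on $X'_{A'}$ pushed forward along the affine map to $X_{A}$ stay in the heart, and cohomology is unchanged. This gives the bound chartwise, and then minimality gives the inequality.

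The main obstacle is making this smooth-local reduction rigorous in the derived stack setting. In particular we need flat base change for $\RDER f_*$ on unbounded $\Dsqc$ of non-concentrated morphisms, and we need the affine pushforward $\RDER\gamma_*$ to be $t$-exact and to preserve colimits (Example \ref{ex:base-change-affine}). I would try first to do everything using only these two inputs.
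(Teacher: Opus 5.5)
Your affine-case arguments are correct and match the paper's. You get (ii) and (iv) from Lemma \ref{lem:strongproj}, and (iii) by the generation argument using (ii). You get base change between affines from (iii) with $N=\RDER g_*\Orb_{S'}$, plus affine base change and the affine projection formula. The bound $d(\LDER\gamma^*P,f')\le d(P,f)$ comes from $t$-exactness of $\RDER\gamma_*$.

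The gap is the globalization. You reduce to affine $S$ by asserting that $\RDER f_*$ commutes with smooth base change for every quasi-compact quasi-separated $f$. At the end you list ``flat base change for $\RDER f_*$ on unbounded $\Dsqc$ of non-concentrated morphisms'' as a needed input. That input is false, and non-concentrated $f$ is exactly the case this proposition is for.

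For example, let $k$ have characteristic $2$ and $f\colon X=B(\Z/2)\to\Spec k$. Take $M=\bigoplus_{r\geq 0}\Orb_X[r]\simeq\prod_{r\geq 0}\Orb_X[r]$ as in Example \ref{ex:counterexample-non-tame-pushforward}, and the smooth map $g\colon\Spec k[t]\to\Spec k$. Then $\H^0(\LDER g^*\RDER f_*M)\simeq(\prod_r k)\otimes_k k[t]$, while $\H^0(\RDER f'_*\LDER\gamma^*M)\simeq\prod_r k[t]$. The element $(1,t,t^2,\dots)$ is not in the image of the base change map.

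What is true is base change for $\RDER f_*(P\ltensor_{\Orb_X}-)$, but that is part (v) itself. So using smooth locality to prove (ii)--(iv) for non-affine $S$ is circular. The same goes for ``checking on smooth covers of $S'$'' in your general case of (v). The functor $\RDER f_*$ is defined only as the right adjoint of $\LDER f^*$ on $\Dsqc(S)\simeq\varprojlim_i\Ds(A_i)$, and nothing you have proved says it is computed chart by chart.

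The missing step is how to pass from the affine--affine case to the general one using only what the affine case provides. The paper writes $\Dsqc(S)\simeq\varprojlim_{i\in I}\Ds(A_i)$ over smooth affine charts with smooth transition maps, and $\Dsqc(X)\simeq\varprojlim_i\Dsqc(X_{A_i})$. It does the same for $S'$ and $X'$ over the same index category. Your affine--affine base change supplies exactly the Beck--Chevalley equivalences for $\RDER f_{A_i,*}(P\ltensor -)$ along the transition maps. Lemma \ref{lem:bc-prl-limits} then shows that the right adjoint on the limit is computed termwise and satisfies base change. This is done first for $S$ affine and $S'$ arbitrary, then for arbitrary $S$.

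Only after this is everything genuinely smooth-local. Then (ii)--(iv) for general $S$ follow from the affine case as you describe: coproducts and truncations are detected on charts, and for (iv) you take the maximum of the chart bounds over a finite cover. The same termwise description also gives $S'$-compactness of $\LDER\gamma^*P$ for non-affine $S$. In your sketch that step rested on a descent of compactness along flat covers, which still needs proof.
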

\begin{proof} 
  Claim \ref{prop:relative-compactness-properties:ideal} is trivial
  from Lemma \ref{lem:strongproj}\ref{lem:strongproj:ideal}. For
  \ref{prop:relative-compactness-properties:sum}-\ref{prop:relative-compactness-properties:base-change},
  we first consider the case where $S'=\Spec A'$ and $S=\Spec A$. In
  this case, \ref{prop:relative-compactness-properties:sum} and
  \ref{prop:relative-compactness-properties:bound} are just Lemma
  \ref{lem:strongproj}\ref{lem:strongproj:dual} and
  \ref{lem:strongproj:bound}, respectively, with $d(P,f) = d(P)$. For
  \ref{prop:relative-compactness-properties:projection}: adjunction
  gives a natural morphism
  \[
    \phi(N) \colon \RDER\Gamma(X,P\ltensor_{\Orb_X} M) \ltensor_A N \to
    \RDER \Gamma(X,P\ltensor_{\Orb_X} M \ltensor_{\Orb_X} \LDER f^*N)
  \]
  such that $\phi(A)$ is an equivalence. Now let
  $\mathcal{T} \subseteq \Ds(A)$ be the full subcategory spanned by
  those $N$ such that $\phi(N)$ is an equivalence. Then $\mathcal{T}$
  is stable, contains $A$, and is closed under small coproducts
  because $P$ is compact in $\Dsqc(X)$. In particular,
  $\mathcal{T} = \Ds(A)$ and we have the claim. For \ref{prop:relative-compactness-properties:base-change}: \ref{prop:relative-compactness-properties:projection} tells us that there are natural equivalences:
  \begin{align*}
    \RDER\Gamma(X,P\ltensor_{\Orb_X} M) \ltensor_A A' &\simeq \RDER \Gamma(X,P \ltensor_{\Orb_X} M \ltensor_{\Orb_X} \LDER f^* \RDER g_*\Orb_{\Spec A'})\\
                                        &\simeq \RDER \Gamma(X,P \ltensor_{\Orb_X} M \ltensor_{\Orb_X} \RDER \gamma_*\Orb_{X'}) & \mbox{(Example \ref{ex:base-change-affine})}\\
                                        &\simeq \RDER \Gamma(X,\RDER \gamma_*\LDER \gamma^*(P \ltensor_{\Orb_X} M)) & \mbox{(Example \ref{ex:affine-projection-formula})}\\
    &\simeq \RDER \Gamma(X',\LDER \gamma^*P\ltensor_{\Orb_{X'}}\LDER \gamma^*M). 
  \end{align*}
  That $\LDER \gamma^*P$ is $S'$-compact is Example \ref{ex:abs-rel-compact}. By Example \ref{ex:affine-projection-formula}, if $M' \in \Dsqc(X')$, then
  \begin{align*}
    \RDER\Gamma(X',\LDER \gamma^*P\ltensor_{\Orb_{X'}} M') &\simeq \RDER \Gamma(X,\RDER \gamma_*(\LDER \gamma^*P\ltensor_{\Orb_{X'}} M'))\\
    &\simeq \RDER \Gamma(X,P \ltensor_{\Orb_X} \RDER \gamma_*M').
  \end{align*}
  Since $\RDER \gamma_*$ is $t$-exact, $d(\LDER \gamma^*P,f') \leq d(P,f)$.

Next note that \ref{prop:relative-compactness-properties:sum}-\ref{prop:relative-compactness-properties:bound} in general follow from the affine case already considered and the general case of \ref{prop:relative-compactness-properties:base-change}. Thus, it remains to prove \ref{prop:relative-compactness-properties:base-change} in general. Still keeping $S=\Spec A$ but now allowing $S'$ to be arbitrary, we can use that $\Dsqc(S') = \varprojlim_{i\in I'} \Ds(A_i')$ and $\Dsqc(X') = \varprojlim_{i\in I'} \Dsqc(X_i')$, where $X_i' = X'\times^\RDER_{S'} \Spec A_i'$ for some category $I'$. By the affine case already considered, it follows from Lemma \ref{lem:bc-prl-limits} that we have the claim. In general, write $\Dsqc(S) = \varprojlim_{i\in I} \Ds(A_i)$. Then $\Dsqc(X) = \varprojlim_{i\in I} \Dsqc(X_i)$, where $X_i=X\times^\RDER_S \Spec A_i$; $\Dsqc(S') = \varprojlim_{i\in I} \Dsqc(S_i')$, where $S'_i = S'\times_S^\RDER \Spec A_i$; and $\Dsqc(X') = \varprojlim_{i\in I} \Dsqc(X'_i)$, where $X'_i=X'\times_{S'}^\RDER S'_i$. By the case already considered, the result now follows again from Lemma \ref{lem:bc-prl-limits}.
\end{proof}
We finally get to our relationship between compactness and perfection.
\begin{corollary}\label{cor:compact-is-perfect}
Let $f \colon X \to S$ be a quasi-compact and quasi-separated morphism of derived algebraic stacks. Let $P \in \Dsqc(X)$.
\begin{enumerate}[label = \normalfont(\roman*)]
\item\label{cor:compact-is-perfect:restrict} If $Q \in \Dsqc(S)$ is compact and $P$ is $S$-compact, then $P\ltensor_{\Orb_X}\LDER f^*Q \in \Dsqc(X)$ is compact. In particular, if $f$ is concentrated, then $\LDER f^*Q$ is compact.
\item\label{cor:compact-is-perfect:compact} If $P$ is $S$-compact, then $P$ is perfect.
\item\label{cor:compact-is-perfect:relcompact} If $S$ is quasi-separated and $P$ is compact, then $P$ is $S$-compact.
\item\label{cor:compact-is-perfect:fcd} If $S$ is quasi-compact, quasi-separated, and finite cohomological dimension, then $P$ is $S$-compact if and only if it is compact.
\item\label{cor:compact-is-perfect:cl} If $P$ is almost connective, then it is $S$-compact if and only if $P_{X_{\cl}}$ is $S_{\cl}$-compact.
\end{enumerate}
\end{corollary}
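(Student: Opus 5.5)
We prove the five parts in order, using Proposition \ref{prop:relative-compactness-properties} and Lemma \ref{lem:strongproj} throughout.

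For \ref{cor:compact-is-perfect:restrict}, we check that $\RDER\Hom_{\Orb_X}(P\ltensor_{\Orb_X}\LDER f^*Q,-)$ preserves small coproducts. Perfection of $P$ is not yet available, so we first assume it and return to it in \ref{cor:compact-is-perfect:compact}. With $P$ perfect,
\[
\RDER\Hom_{\Orb_X}(P\ltensor\LDER f^*Q, M)\simeq \RDER\Hom_{\Orb_S}(Q,\RDER f_*(P^\vee\ltensor M)).
\]
Here $P^\vee$ is $S$-compact, and Proposition \ref{prop:relative-compactness-properties}\ref{prop:relative-compactness-properties:sum} shows that $\RDER f_*(P^\vee\ltensor -)$ preserves coproducts. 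Since $Q$ is compact, the composite preserves coproducts. Taking $P=\Orb_X$ gives the concentrated case.

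For \ref{cor:compact-is-perfect:compact}, perfection is smooth local on $S$. So we pass to a smooth atlas and reduce to $X_A$ for $\Spec A\to S$ smooth, where $P_{X_A}$ is compact in $\Dsqc(X_A)$. We then need that a compact object of $\Dsqc$ on a quasi-compact quasi-separated derived stack is perfect. We would prove this by restricting along a smooth $\Spec B\to X_A$ and showing the restriction is compact in $\Ds(B)$. For that, take a smooth atlas $p\colon \Spec B\to X_A$; then $\RDER p_*$ preserves filtered colimits, by base change together with quasi-compactness and quasi-separatedness, so $\LDER p^*$ preserves compacts. Compact objects of $\Ds(B)$ are perfect. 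I expect this step to be the main obstacle: one has to justify that $\RDER p_*$ commutes with colimits for a non-representable smooth cover of a stack, possibly only after restricting to bounded-below or truncated objects, mirroring \cite[Lemma 4.4]{perfect_stax}. Note that once $P$ is perfect, part \ref{cor:compact-is-perfect:restrict} needs no additional assumption.

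For \ref{cor:compact-is-perfect:relcompact}, take $\Spec A\to S$ smooth. Since $S$ is quasi-separated, this map is quasi-compact and quasi-separated, and so is $\gamma\colon X_A\to X$. Smooth, hence flat, base change (Lemma \ref{lem:bc-prl-limits}) shows that $\RDER\gamma_*$ preserves small coproducts. Hence $\LDER\gamma^*P$ is compact.

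For \ref{cor:compact-is-perfect:fcd}, the forward direction applies \ref{cor:compact-is-perfect:restrict} with $Q=\Orb_S$, which is compact by finite cohomological dimension (Remark \ref{rem:classical-compact}). The reverse direction is \ref{cor:compact-is-perfect:relcompact}.

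For \ref{cor:compact-is-perfect:cl}, we reduce to the affine case $S=\Spec A$. By Example \ref{ex:aperf-nilp}, perfection of an almost connective complex can be checked on $X_{\cl}$. Using Lemma \ref{lem:strongproj}\ref{lem:strongproj:fcd}, compactness becomes a vanishing of cohomology of $P\ltensor F$ for $F$ in the heart. Such $F$ is pushed forward from $X_{\cl}$ via the $t$-exact map $i_*$, and $P\ltensor i_*F\simeq i_*(P_{X_{\cl}}\ltensor F)$. So this vanishing condition is the same on $X$ and on $X_{\cl}$. We also need that $(X_A)_{\cl}=(X_{\cl})_{A}$ up to classical truncation, which holds on the level of hearts.
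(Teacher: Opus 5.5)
There is a genuine gap in (ii). You flagged it yourself but did not close it, and the same missing idea also breaks your justification of (iii).

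\textbf{The gap in (ii).} You need $\LDER p^*$ to preserve compact objects, and you describe $p\colon \Spec B\to X_A$ as a non-representable cover. It is not. The diagonal of $X_A$ is representable by derived algebraic spaces, so $p$ is representable. Since $X_A$ is quasi-compact and quasi-separated, $p$ is also quasi-compact and quasi-separated. Hence $p$ is concentrated (Example~\ref{ex:concentrated-morphisms}). Concretely, every base change of $p$ along a smooth $\Spec C\to X_A$ is a quasi-compact, quasi-separated derived algebraic space, whose structure sheaf is compact (Remark~\ref{rem:classical-compact}). So $\Orb_{\Spec B}$ is $X_A$-compact. Proposition~\ref{prop:relative-compactness-properties}\ref{prop:relative-compactness-properties:sum} then shows $\RDER p_*$ preserves small coproducts. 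Equivalently, the perfect case of your (i), applied to $p$ with the $X_A$-compact object $\Orb_{\Spec B}$, makes $\LDER p^*P_{X_A}$ compact in $\Ds(B)$, hence perfect. This is exactly the paper's argument.

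Your suggested fallback, restricting to bounded-below or truncated objects as in Example~\ref{ex:bounded-below-colimits}, would not suffice. Commuting with filtered colimits only on each $\Dsqc^{\geq n}$ gives at best pseudo-coherence of $\LDER p^*P_{X_A}$, not finite Tor-amplitude, so it does not give perfection. Since your (i) (beyond the perfect case), your (iv) and your (v) all use perfection of $S$-compact or compact objects, they depend on this step.

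\textbf{The same issue in (iii).} Quasi-compactness, quasi-separatedness and flatness of $\gamma\colon X_A\to X$ do not make $\RDER\gamma_*$ commute with small coproducts. For example, $B(\Z/2)\to\Spec k$ in characteristic $2$ is smooth and quasi-compact quasi-separated, yet the computation in Example~\ref{ex:counterexample-non-tame-pushforward} shows its pushforward does not commute with $\bigoplus_{r\geq 0}\Orb[r]$. Lemma~\ref{lem:bc-prl-limits} only assembles Beck--Chevalley equivalences you already have, and unbounded base change is precisely what needs concentratedness.

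What makes (iii) work is this: $\gamma$ is the base change of $\Spec A\to S$, which is representable and, because $S$ is quasi-separated, quasi-compact and quasi-separated. So $\gamma$ is concentrated, and (i) applied to $\gamma$ with $Q=P$ gives $\LDER\gamma^*P$ compact.

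\textbf{The remaining parts.} Once these are repaired, your (iv) is correct and is a tidier packaging of the paper's argument: apply (i) with $Q=\Orb_S$. Your (v) is also correct. It unwinds Remark~\ref{rem:classical-compact} through Lemma~\ref{lem:strongproj}\ref{lem:strongproj:fcd} and the identification of hearts via $i_*$. The paper instead uses that $X_{\cl}\to X$ is affine, hence concentrated, and invokes (i).
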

\begin{proof}
  We first treat \ref{cor:compact-is-perfect:restrict} when $P$ is
  perfect.  By Proposition \ref{prop:relative-compactness-properties}
  \ref{prop:relative-compactness-properties:sum}, the functor
  $\RDER f_*(P \ltensor_{\Orb_X} - ) \colon \Dsqc(X) \to \Dsqc(S)$
  preserves small coproducts. Hence, its left adjoint
  $P^\vee \ltensor_{\Orb_X} \LDER f^*(-)$ preserves compacts
  \cite[Lemma 5.5.1.4]{HTT}. This applies when $P=\Orb_X$, so if $f$ is
  concentrated, then $\LDER f^*Q$ is compact. For
  \ref{cor:compact-is-perfect:compact}: if $\Spec A \to S$ is smooth,
  then the restriction of $P$ to $X_A = X\times_S^\RDER \Spec A$ is
  compact in $\Dsqc(X_A)$. Since $X_A$ is quasi-compact and
  quasi-separated, there is a smooth surjection
  $p\colon \Spec B \to X_A$, which is quasi-compact, quasi-separated,
  and representable. Hence, $p$ is concentrated (Example
  \ref{ex:concentrated-morphisms}). By the case of
  \ref{cor:compact-is-perfect:restrict} already considered,
  $\LDER p^*P_{X_A} \in \Ds(B)$ is compact, so is perfect. To complete
  \ref{cor:compact-is-perfect:restrict}: we combine with perfect case
  already established with \ref{cor:compact-is-perfect:compact}. For
  \ref{cor:compact-is-perfect:relcompact}: since $S$ is
  quasi-separated, the map $X_A \to X$ above is concentrated, so
  \ref{cor:compact-is-perfect:restrict} implies that $P_{X_A}$ is
  compact on $X_A$. Hence, $P$ is $S$-compact. For
  \ref{cor:compact-is-perfect:fcd}: if $P$ is compact, then it is
  $S$-compact by \ref{cor:compact-is-perfect:relcompact}. Conversely,
  if $P$ is $S$-compact, then it is perfect by
  \ref{cor:compact-is-perfect:compact}. Hence, it suffices to prove
  that
  $\RDER \Gamma(X, P \ltensor_{\Orb_X} - ) \simeq \RDER \Gamma(S ,
  \RDER f_*(P \ltensor_{\Orb_X} - ))$ preserves small coproducts in
  $\Dsqc(X)$ (Lemma 
  \ref{lem:strongproj}\ref{lem:strongproj:dual}). Since $S$ is
  concentrated, $\Orb_S$ is a compact object of $\Dsqc(S)$, so
  $\RDER \Gamma(S,-)$ preserves small coproducts in
  $\Dsqc(S)$. Proposition
  \ref{prop:relative-compactness-properties}\ref{prop:relative-compactness-properties:sum}
  implies that $\RDER f_*(P \ltensor_{\Orb_X} -)$ preserves small
  coproducts in $\Dsqc(X)$, and the result follows. For
  \ref{cor:compact-is-perfect:cl}: we may assume that $S$ and $S_\cl$
  are affine. By Example \ref{ex:abs-rel-compact}, it suffices to
  prove that $P$ is compact if and only if $P_{X_{\cl}}$ is
  compact. If $i\colon X_{\cl} \to X$ denotes the inclusion, then $i$
  is affine and so concentrated (Example
  \ref{ex:concentrated-morphisms}). By
  \ref{cor:compact-is-perfect:restrict}, it follows that if $P$ is
  compact, then $P_{X_\cl}$ is compact. Conversely, if $P_{X_{\cl}}$
  is compact, then \ref{cor:compact-is-perfect:compact} implies that
  $P_{X_{\cl}}$ is perfect. By Example \ref{ex:aperf-nilp}, $P$ is
  perfect. Now apply Remark \ref{rem:classical-compact}.
\end{proof}
\section{Cohomological properness}
Let $f \colon X \to S$ be an almost perfect morphism of derived
locally noetherian algebraic stacks. We say that $f$ is
\emph{cohomologically proper} if
$\RDER^n f_*M=\mathcal{H}^n(\RDER f_*M) \in \Ds_{\coh}^-(S)^\heartsuit$ for all
$M \in \Ds_{\coh}^-(X)^\heartsuit$ and $n\in \Z$. To the authors' knowledge, cohomological
properness was first studied systematically in
\cite{Halpern-Leistner_Preygel_2023}. Also see
\cite{Coherent_complete} for some variations. We begin with the
following lemma.
\begin{lemma}\label{lem:compact-pushforward}
  Let $f \colon X \to S$ be a cohomologically proper morphism of
  derived noetherian algebraic stacks.
  \begin{enumerate}[label = \normalfont(\roman*)]
  \item \label{lem:compact-pushforward:bdd_below} $\RDER f_*$ sends
    $\Ds_{\coh}^+(X)$ to $\Ds_{\coh}^+(S)$.
  \item \label{lem:compact-pushforward:compact} If $P \in \Dsqc(X)$ is
    $S$-compact, then $\RDER f_*(P \ltensor_{\Orb_X} - )$ preserves pseudo-coherents.
  \end{enumerate}
\end{lemma}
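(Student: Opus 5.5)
For \ref{lem:compact-pushforward:bdd_below}, the plan is to reduce to objects of the heart and then use the coherence of $\RDER^n f_*$ on the heart. Let $M \in \Ds_{\coh}^+(X)$ and fix $n$. We must show $\mathcal{H}^n(\RDER f_*M)$ is coherent, and that $\RDER f_*M$ is bounded below.

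Boundedness below is immediate, since $\RDER f_*$ is left $t$-exact: it is right adjoint to the right $t$-exact functor $\LDER f^*$. For coherence, note that $\mathcal{H}^n(\RDER f_*M)$ depends only on $\tau^{\leq n}M$, because $\RDER f_*\tau^{>n}M$ lies in degrees $>n$. So we may assume $M$ is bounded, say with cohomology in $[a,n]$. Now induct on the length $n-a$ using the fiber sequences $\mathcal{H}^a(M)[-a] \to M \to \tau^{>a}M$. The long exact sequence exhibits $\mathcal{H}^n(\RDER f_*M)$ as an extension of a subobject of a coherent sheaf by a quotient of a coherent sheaf. Since $S$ is noetherian, coherent sheaves on $S$ form an abelian subcategory of $\Dsqc(S)^\heartsuit$ closed under extensions, so $\mathcal{H}^n(\RDER f_*M)$ is coherent. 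The base case is exactly cohomological properness applied to $\mathcal{H}^a(M)\in\Ds^-_{\coh}(X)^\heartsuit$.

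For \ref{lem:compact-pushforward:compact}, the argument runs as follows.

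\textbf{Step 1: reduction.} Pseudo-coherence on $S$ is smooth local, and $S$-compactness and the formation of $\RDER f_*(P\ltensor -)$ are compatible with flat base change (Proposition~\ref{prop:relative-compactness-properties}\ref{prop:relative-compactness-properties:base-change}). Smooth base change preserves noetherianity and cohomological properness. Hence we may assume $S=\Spec A$ with $A$ noetherian. By Example~\ref{ex:noetherian-ap}, pseudo-coherent objects are then those that are almost connective with coherent cohomology. Corollary~\ref{cor:compact-is-perfect}\ref{cor:compact-is-perfect:compact} shows $P$ is perfect.

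\textbf{Step 2: almost connectivity.} Let $M\in\Ds^-_{\coh}(X)$. Proposition~\ref{prop:relative-compactness-properties}\ref{prop:relative-compactness-properties:bound} gives that $\RDER f_*(P\ltensor M)$ is almost connective, and moreover
\[
\tau^{\geq n}\RDER f_*(P\ltensor M)\simeq\tau^{\geq n}\RDER f_*(P\ltensor\tau^{\geq n-d}M).
\]
Since $P$ is perfect, $P\ltensor\tau^{\geq n-d}M$ is bounded with coherent cohomology. Being perfect, $P$ has finite Tor-amplitude locally, and $X$ is quasi-compact, which is why it is bounded. Its cohomology is coherent because it is pseudo-coherent and bounded on the noetherian stack $X$.

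\textbf{Step 3: conclusion.} Part~\ref{lem:compact-pushforward:bdd_below} shows $\RDER f_*(P\ltensor\tau^{\geq n-d}M)$ has coherent cohomology, hence $\mathcal{H}^n$ of $\RDER f_*(P\ltensor M)$ is coherent for every $n$. Combined with almost connectivity, this gives pseudo-coherence.

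The main obstacle is the justification in Step 2 that $P\ltensor N$ is bounded below with coherent cohomology for bounded $N\in\Ds_{\coh}(X)$. The plan is to check it smooth locally on an affine atlas $\Spec B\to X$. There $P$ becomes a perfect $B$-module of finite Tor-amplitude, and tensoring a bounded pseudo-coherent $B$-module with it stays bounded and pseudo-coherent. Quasi-compactness of $X$ then yields a uniform bound.
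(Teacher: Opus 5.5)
Apart from your Step~1, your argument is the paper's. For (ii) you truncate with the bound $d(P,f)$ of Proposition~\ref{prop:relative-compactness-properties}\ref{prop:relative-compactness-properties:bound}, note that $P\ltensor_{\Orb_X}\tau^{\geq n-d}M\in\Ds^b_{\coh}(X)$ because $P$ is perfect and $X$ is quasi-compact, and apply (i). For (i) you replace the hypercohomology spectral sequence by an explicit d\'evissage along $\mathcal{H}^a(M)[-a]\to M\to\tau^{>a}M$. You combine this with the observation that $\mathcal{H}^n(\RDER f_*M)$ only depends on $\tau^{\leq n}M$. This is the same argument unwound, and it is fine.

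The problem is Step~1, and it is a gap as written. You assert that smooth base change preserves cohomological properness, and you genuinely use this. In Step~3 you apply (i) to the base-changed morphism $f'\colon X'\to\Spec A$, evaluated on $P'\ltensor\tau^{\geq n-d}M'$ with $M'$ an \emph{arbitrary} pseudo-coherent object on $X'$.

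This claim is not formal. Flat base change $\LDER g^*\RDER f_*\simeq\RDER f'_*\LDER\gamma^*$ on $\Dsqc^+(X)$ only controls sheaves pulled back from $X$. Cohomological properness of $f'$, by contrast, quantifies over all coherent sheaves on $X'$. Closing that gap is exactly the content of the paper's base-change result, Theorem~\ref{thm:noetherian-base-changes-derived}, and you cannot borrow it here:
\begin{enumerate}
\item it is itself proved using this lemma, so invoking it would be circular;
\item it needs a compact $P_0$, because the bound $d$ is what starts the descending induction on degrees.
\end{enumerate}
For $\Orb_X$ no such bound exists when $f$ is not concentrated; see Example~\ref{ex:counterexample-non-tame-pushforward}. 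So Step~1 rests on an unproved, nontrivial claim.

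The reduction is also unnecessary, so simply delete Step~1.
\begin{itemize}
\item $S$ is noetherian, hence quasi-compact, so Proposition~\ref{prop:relative-compactness-properties}\ref{prop:relative-compactness-properties:bound} applies directly to $f$.
\item On noetherian $S$, pseudo-coherence is still equivalent to almost connectivity plus coherent cohomology sheaves, since the characterization in Example~\ref{ex:noetherian-ap} is smooth local.
\item In Step~3 you then apply (i) to $f$ itself, which is cohomologically proper by hypothesis.
\end{itemize}
With that change your proof coincides with the paper's.
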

\begin{proof}
  Claim \ref{lem:compact-pushforward:bdd_below} follows immediately
  from the hypercohomology spectral sequence
  \cite{antieau2024spectralsequencesdecalagebeilinson}:
  \[
    E_2^{p,q} = \RDER^pf_*\mathcal{H}^q(M) \Rightarrow \RDER^{p+q}f_*M
  \]
  For \ref{lem:compact-pushforward:compact}: if $M \in \Ds^-_{\coh}(X)$, by Example \ref{ex:truncation-perfect}, it suffices to prove that for each $n\in \Z$, $\tau^{\geq n}\RDER f_*(P \ltensor_{\Orb_X} M) \in \Ds^-_{\coh}(S)$. By Proposition
  \ref{prop:relative-compactness-properties}\ref{prop:relative-compactness-properties:bound}
 there is a $d(P,f)$ such that 
  $\tau^{\geq n}\RDER f_*(P \ltensor_{\Orb_X} M) \simeq \tau^{\geq n}
  \RDER f_*(P \ltensor_{\Orb_X} \tau^{\geq n - d(P,f)}M)$. Hence, we
  may replace $M$ by $\tau^{\geq n-d(P,f)}M \in \Ds_{\coh}^b(X)$. Since $P$ is perfect,
  $P \ltensor_{\Orb_X} M \in \Ds_{\coh}^b(X)$. Now apply
  \ref{lem:compact-pushforward:bdd_below}.
\end{proof}
That $P$ is compact is essential in
Lemma \ref{lem:compact-pushforward}\ref{lem:compact-pushforward:compact}
(cf.~\cite[\href{https://stacks.math.columbia.edu/tag/07DC}{Tag
  07DC}]{stax}).
\begin{example}\label{ex:counterexample-non-tame-pushforward}
  Let $k$ be a field of characteristic $2$. Let
  $f\colon X = B(\Z/2) \to S=\Spec k$. It is easy to calculate
  directly that
  $\RDER f_*\Orb_{X} \simeq \bigoplus_{n\leq 0}\Orb_S[n]$. Let
  $M = \bigoplus_{r\geq 0}\Orb_X[r] \simeq \prod_{r\geq 0} \Orb_X[r]
  \in \Ds_{\Coh}^-(X)$ (by Lemma \ref{L:right-and-left-completeness}); then $\RDER f_*$ and $\H^0$ commute with products, so
  \[
    \H^0(\RDER f_*M) \simeq \prod_{r\geq 0} \H^0(\RDER f_*\Orb_X[r]) \simeq \prod_{r \geq 0} \Orb_S \notin \Coh(S).
  \]
  
\end{example}
The following is a derived version of Theorem \ref{thm:mainprime}, the
main result of this section. This is similar to \cite[Proposition
2.4.7]{Halpern-Leistner_Preygel_2023}.
\begin{theorem}\label{thm:noetherian-base-changes-derived}
  Let $f_0 \colon X_0 \to S_0$ be a cohomologically proper morphism of derived  noetherian algebraic stacks. Let $a\colon S \to S_0$ be a morphism. Let
  $f \colon X \to S$ be the derived base change and
  $\alpha \colon X \to X_0$ the projection. Let $P_0 \in \Dsqc(X_0)$ be
  $S_0$-compact. If
  \begin{enumerate}[label = \normalfont(\roman*)]
  \item \label{thm:noetherian-base-changes-derived:qaff-noeth} $S$ is
    noetherian and $a$ is quasi-affine; or
  \item \label{thm:noetherian-base-changes-derived:qaff-diag} $S_0$
    has quasi-affine diagonal;
  \end{enumerate}
  then $\RDER f_*(\LDER\alpha^*P_0 \ltensor_{\Orb_X} - )$ preserves pseudo-coherents. 
\end{theorem}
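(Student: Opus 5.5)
The plan is to reduce to the case $S_0=\Spec A_0$ and $S=\Spec A$ with $A_0$ noetherian. In that case the functor is controlled on pullbacks from $X_0$ by the projection formula and base change (Proposition \ref{prop:relative-compactness-properties}). A general pseudo-coherent complex on $X$ is then approximated, up to any fixed truncation, by objects built from such pullbacks.

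\emph{Step 1: reduction to affines.} Pseudo-coherence is smooth local on $S$. By Proposition \ref{prop:relative-compactness-properties}\ref{prop:relative-compactness-properties:base-change}, $\RDER f_*(\LDER\alpha^*P_0\ltensor -)$ commutes with flat base change on $S$, and the relevant relative compactness is preserved. So we may assume $S=\Spec A$.

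Next choose a smooth surjection $\Spec A_0\to S_0$. By flat base change, the pullback of $f_0$ to $\Spec A_0$ is still cohomologically proper, and the pullback of $P_0$ is $\Spec A_0$-compact. Form $T=\Spec A\times^{\RDER}_{S_0}\Spec A_0$, which is smooth and surjective over $\Spec A$. Under \ref{thm:noetherian-base-changes-derived:qaff-diag}, the morphism $\Spec A_0\to S_0$ is quasi-affine, so $T$ is quasi-affine, hence covered by finitely many affine opens $\Spec A'$. Under \ref{thm:noetherian-base-changes-derived:qaff-noeth}, $T\to\Spec A_0$ is quasi-affine and the same conclusion holds. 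Replacing $S$ by such a $\Spec A'$ (smooth over $S$), we reach the situation $S_0=\Spec A_0$ noetherian, $S=\Spec A$ arbitrary, with $X=X_0\times^{\RDER}_{\Spec A_0}\Spec A$. In this situation $d=d(\LDER\alpha^*P_0,f)\le d(P_0,f_0)$ is finite.

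\emph{Step 2: the good class.} Let $\mathcal{C}\subseteq\Dsqc(X)$ be the full subcategory of those $E$ for which $\RDER f_*(\LDER\alpha^*P_0\ltensor E)$ is pseudo-coherent. This class is closed under shifts, cones and retracts. For $N\in\Ds^-_{\coh}(X_0)$, Proposition \ref{prop:relative-compactness-properties}\ref{prop:relative-compactness-properties:base-change} gives
\[
\RDER f_*\LDER\alpha^*(P_0\ltensor N)\simeq \RDER f_{0,*}(P_0\ltensor N)\ltensor_{A_0}A .
\]
The right-hand side is pseudo-coherent by Lemma \ref{lem:compact-pushforward}\ref{lem:compact-pushforward:compact} together with the stability of pseudo-coherence under base change. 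Hence $\LDER\alpha^*N\in\mathcal{C}$.

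\emph{Step 3: approximation (the main obstacle).} Fix $M$ pseudo-coherent on $X$ and $n\in\Z$. By Example \ref{ex:truncation-perfect} and Proposition \ref{prop:relative-compactness-properties}\ref{prop:relative-compactness-properties:bound}, it suffices to show that $\tau^{\ge n}\RDER f_*(\LDER\alpha^*P_0\ltensor M)$ is $(-n)$-perfect. For this it is enough to produce $E\in\mathcal{C}$ and a map $E\to M$ whose cone $C$ satisfies $\tau^{\ge n-d-1}C\simeq 0$. Indeed, by Proposition \ref{prop:relative-compactness-properties}\ref{prop:relative-compactness-properties:bound} such a map induces an isomorphism on $\tau^{\ge n}$ of the pushforwards.

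I would build $E$ by descending induction on the top nonzero degree $s$ of $M$, which is finite since $M$ is almost connective. The key input is the following claim: every finite-type quasi-coherent $\pi_0$-sheaf $\mathcal{H}^s(M)$ on $X_\cl$ is a quotient of $\alpha^*F$ for some coherent $F$ on $X_0$. To prove it, push forward along the affine morphism $X_\cl\to X_{0,\cl}$, write the result as a filtered union of coherent subsheaves (which is possible because $X_0$ is noetherian), and use finite type to see that one of them already generates. Lifting $\LDER\alpha^*F[-s]\to\tau^{\ge s}M=\mathcal{H}^s(M)[-s]$ to $M$ involves an obstruction; I expect to handle it by first replacing $F$ with a suitable pseudo-coherent $N\in\Ds^-_{\coh}(X_0)$, or by lifting only after further truncation.

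Once the map is constructed, its cone is again pseudo-coherent with strictly smaller top degree. After finitely many steps, the top degree drops below $n-d-1$, giving the required $E$ as an iterated extension of objects of $\mathcal{C}$. This surjection-and-lifting step, making sure the iterated cones stay pseudo-coherent, is where I expect the real work to be.
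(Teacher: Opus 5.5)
\textbf{There is a genuine gap in each of Steps 3 and 1.} Your ingredients are the paper's: the bound $d$, base change for objects pulled back from $X_0$, and the surjection $\alpha^*F\to\mathcal{H}^s(M)$ obtained from an exhaustion by coherent subsheaves. But the step you flag in Step 3 is exactly where the paper's proof is organized differently.

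\textbf{Step 3: the lift is obstructed.} For pseudo-coherent $M$ with top degree $s$, a surjection $\alpha^*F\to\mathcal{H}^s(M)$ lifts to a map $\LDER\alpha^*F[-s]\to M$ only if an obstruction in $\Hom(\LDER\alpha^*F[-s],\tau^{<s}M[1])$ vanishes. This group is built from higher Ext's of $\LDER\alpha^*F$ into the lower cohomology of $M$. There is no reason for it to vanish, since quasi-coherent sheaves on a stack need not have enough projectives or vector bundles. Nor can you pass to the heart. Over a non-noetherian $A$, the truncations and cohomology sheaves of a pseudo-coherent complex need not be pseudo-coherent.

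The paper separates the two situations.
\begin{itemize}
\item When $S$ is noetherian (case (i)), the $\mathcal{H}^i(M)$ are coherent. The Postnikov filtration plus the bound $d$ then reduce to $M\in\Ds^-_{\coh}(X)^\heartsuit$. There a map $\LDER\alpha^*M_0\to M$ is determined on $\mathcal{H}^0$, so it exists with no obstruction. An induction on $t$ starting at $t=-d-1$ finishes, using the fiber sequence $K\to\LDER\alpha^*M_0\to M$ and coherence on the noetherian $S_\cl$.
\item For general affine $S$ (case (ii)), the missing idea is noetherian approximation of the complex itself. The truncation $\tau^{\geq -t-d}M$ is finitely $(t+d)$-presented, so by Theorem \ref{thm:fp-approx} it descends to $X_\lambda=X_0\times^{\RDER}_{S_0}\Spec A_\lambda$ with $A_\lambda$ of finite type over $\pi_0(A_0)$. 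The bound $d$ and base change then give $\tau^{\geq -t}\RDER f_*(P\ltensor_{\Orb_X} M)\simeq\tau^{\geq -t}\LDER a_\lambda^*\RDER f_{\lambda,*}(P_\lambda\ltensor_{\Orb_{X_\lambda}} M_\lambda)$, which is pseudo-coherent by case (i).
\end{itemize}
A direct route like yours would need an extra approximation lemma on the noetherian $X_0$, producing a pseudo-coherent object that maps to $\RDER\alpha_*M[s]$ one degree at a time. The proposal does not supply it.

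\textbf{Step 1: cohomological properness of the base change is not justified.} Flat base change does not show that $X_0\times^{\RDER}_{S_0}\Spec A_0\to\Spec A_0$ is cohomologically proper. It only controls $\RDER^n f_*$ of sheaves pulled back from $X_0$. Steps 2 and 3, however, apply Lemma \ref{lem:compact-pushforward} to arbitrary coherent sheaves on the new $X_0$.

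A symptom: your reduction uses hypotheses (i) or (ii) only to cover $T$ by affines. That is automatic, since $T$ is an algebraic space, so if complete your argument would prove the theorem with no hypothesis on $a$ or $S_0$.

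In the paper these hypotheses are used essentially. The morphism $f_0$ is never replaced. Instead, quasi-affineness of $\alpha$ (resp.~of $\Spec A_\lambda\to S_0$) makes $\alpha_\cl^*\alpha_{\cl,*}\mathcal{H}^0(M)\to\mathcal{H}^0(M)$ surjective. Hence coherent sheaves on $X$ are quotients of pullbacks of coherent sheaves on the original $X_0$, where cohomological properness is known. Even for a quasi-affine chart $\Spec A_0\to S_0$, establishing what you need amounts to proving case (i) of the theorem itself.
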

\begin{proof}
  This is smooth-local on $S$, we may assume that $S=\Spec A$ is quasi-compact. Set
  $P=\LDER \alpha^*P_0$. Let $d=d(P_0,f_0)\geq d(P, f)\geq 0$ be as in
  Proposition
  \ref{prop:relative-compactness-properties}\ref{prop:relative-compactness-properties:bound}. If
  $N\in \Dsqc(X)$ is almost connective, then
  $\RDER f_*(P \ltensor_{\Orb_X} N)$ is almost connective. Combining
  Example \ref{ex:aperf-nilp} with Proposition
  \ref{prop:relative-compactness-properties}\ref{prop:relative-compactness-properties:base-change},
  we may further assume $S$ is classical. 

  For \ref{thm:noetherian-base-changes-derived:qaff-noeth}: it suffices to prove by
  induction on $t$ that
  $\tau^{\geq -t}\RDER f_*(P \ltensor_{\Orb_X} M) \in \Ds_{\coh}^-(S)$
  for all connective $M \in \Ds_{\coh}^-(X)$. By a simple filtration
  argument (noting the existence of the bound $d$), if the inductive hypothesis is true for all
  $M \in \Ds_{\coh}^-(X)^\heartsuit$, then it is true for all
  connective $M$. The base case of the induction is $t=-d-1$. This
 holds because
  \[
    \tau^{\geq d+1}\RDER f_*(P \ltensor_{\Orb_X} M) \simeq \tau^{\geq d+1}\RDER f_*(P \ltensor_{\Orb_X} \tau^{\geq 1}M) = 0.
  \]
  Now assume the result is true for all $k\leq t$. We must
  prove it for $k=t+1$. As noted above, it suffices to assume that
  $M \in \Ds_{\coh}^-(X)^\heartsuit$. Let
  $\alpha_{\cl} \colon X_\cl \to X_{0,\cl}$ be the induced
  quasi-affine morphism between the underlying classical algebraic
  stacks. By Remark \ref{rem:heart-derived} and Example \ref{ex:noetherian-ap}, 
  $\mathcal{H}^0(M)$ is a coherent $\Orb_{X_{\cl}}$-module. Then
  $\alpha_{\cl}^*\alpha_{\cl,*}\mathcal{H}^0(M) \to \mathcal{H}^0(M)$
  is surjective
  \cite[\href{https://stacks.math.columbia.edu/tag/0891}{Tag
    0891}]{stax} and so
  $\mathcal{H}^0(\LDER \alpha^*i_*\alpha_{\cl,*}\mathcal{H}^0(M)) \to
  \mathcal{H}^0(M)$ is surjective, where $i\colon X_{0,\cl} \hookrightarrow X_0$ is the inclusion. By \cite[\href{https://stacks.math.columbia.edu/tag/0GRF}{Tag
    0GRF}]{stax}, we can write $\alpha_{\cl,*}\mathcal{H}^0(M)=\cup_{\gamma\in \Gamma} M_\gamma$ as an increasing union of coherent $\Orb_{X_{0,\cl}}$-submodules. Since $\mathcal{H}^0(M)$ is a coherent $\Orb_{X_{\cl}}$-module, for $\gamma$ sufficiently large, the induced maps $\alpha_{\cl}^*M_\gamma \to \alpha_{\cl}^*\alpha_{\cl,*}\mathcal{H}^0(M) \to \mathcal{H}^0(M)$ are all surjective. Hence, there is an $M_0 \in \Ds_{\coh}^-(X_0)^\heartsuit$, where $\mathcal{H}^0(M_0)$ is a 
  coherent $\Orb_{X_{0,\cl}}$-submodule of $\alpha_{\cl,*}\mathcal{H}^0(M)$, together
  with a map $\mu \colon \LDER \alpha^*M_0 \to M$ such that
  $\mathcal{H}^0(\mu)$ is surjective. Form the fiber sequence
  $K \to \LDER \alpha^*M_0 \to M$; then we obtain an exact
  sequence of $\Orb_{S_{\cl}}$-modules:
  \[
    \mathcal{H}^{-t-1}(\RDER f_*(P \ltensor_{\Orb_X} \LDER \alpha^*M_0)) \to \mathcal{H}^{-t-1}(\RDER f_*(P \ltensor_{\Orb_X} M)) \to \mathcal{H}^{-t}(\RDER f_*(P \ltensor_{\Orb_X} K)).
  \]
  By Proposition
  \ref{prop:relative-compactness-properties}\ref{prop:relative-compactness-properties:base-change},
  $\RDER f_*(P \ltensor_{\Orb_X} \LDER \alpha^*M_0) \simeq \LDER
  a^*\RDER f_{0,*}(P_0 \ltensor_{\Orb_{X_0}} M_0)$. By Lemma
  \ref{lem:compact-pushforward},
  $\LDER a^*\RDER f_{0,*}(P_0 \ltensor_{\Orb_{X_0}} M_0) \in
  \Ds_{\coh}^-(S)$. Then the inductive hypothesis implies that
  $\tau^{\geq -t}\RDER f_*(P \ltensor_{\Orb_X} K) \in
  \Ds_{\coh}^-(S)$.  Since ${S_{\cl}}$ is noetherian,
  $\mathcal{H}^{-t-1}(\RDER f_*(P \ltensor_{\Orb_X} M)) \in
  \coh(S_{\cl})$. By the inductive hypothesis, it follows that
  $\tau^{\geq -t-1}(\RDER f_*(P \ltensor_{\Orb_X} M)) \in
  \Ds_{\coh}^-(S)$. This proves
  \ref{thm:noetherian-base-changes-derived:qaff-noeth}.
  
  For \ref{thm:noetherian-base-changes-derived:qaff-diag}: this is
  smooth-local on $S$. Hence, we may assume that $S=\Spec A$ and there
  is a factorization $\Spec A \to \Spec A_0 \to S_0$, where $\Spec A_0 \to S_0$ is smooth. Since $S_0$ has quasi-affine diagonal, all these maps are quasi-affine. It
  suffices to prove that if $M$ is connective and pseudo-coherent on
  $X$, then $\RDER f_*(P \ltensor_{\Orb_X} M)$ is pseudo-coherent on
  $S$. Let $t\geq 0$. Then $\tau^{\geq -t-d}M$ is finitely
  $(t+d)$-presented \cite[\S4.5.2]{SAG}. Now write $\pi_0(A)$ as a union of finitely generated
  $\pi_0(A_0)$-subalgebras. Since $A_0 \to A$ factors through $A_0 \to \pi_0(A_0)$, we have expressed $A$ as a filtered colimit of
  almost perfect $A_0$-algebras $\{A_\lambda\}_{\lambda\in
    \Lambda}$. Let $X_{A_0} = X_0 \times^{\RDER}_{S_0} \Spec A_0$. Then there exist a finitely $(t+d)$-presented
  $M_\lambda$ on
  $X_\lambda = X_0 \times^{\RDER}_{S_0} \Spec A_\lambda\simeq X_{A_0} \times^{\RDER}_{\Spec A_0} \Spec A_\lambda$ and a
  $\Orb_X$-equivalence
  $\tau^{\geq -t-d}M \simeq \tau^{\geq -t-d}\LDER
  \alpha_\lambda^*M_\lambda$, where
  $\alpha_\lambda \colon X \to X_\lambda$ is the projection (Theorem
  \ref{thm:fp-approx}). Let
  $f_\lambda \colon X_\lambda \to \Spec A_\lambda$,
  $\beta_\lambda \colon X_\lambda \to X_0$, and
  $a_\lambda \colon \Spec A \to \Spec A_\lambda$ be the induced
  maps. Set $P_\lambda = \LDER \beta_\lambda^*P_0$; then
  $\LDER \alpha_\lambda^*P_\lambda \simeq P$.  Since $P$ is compact in
  $\Dsqc(X)$, it follows from Lemma
  \ref{lem:strongproj}\ref{lem:strongproj:bound} that there are
  equivalences:
  \begin{align*}
    \tau^{\geq -t} \RDER f_*(P \ltensor_{\Orb_X} M)
    &\simeq  \tau^{\geq -t} \RDER f_*(P \ltensor_{\Orb_X} \tau^{\geq -t-d}M) \simeq \tau^{\geq -t} \RDER f_*(P \ltensor_{\Orb_X} \tau^{\geq -t-d}\LDER \alpha^*_\lambda M_\lambda)\\
    &\simeq \tau^{\geq -t} \RDER f_*\LDER \alpha^*_\lambda(P_\lambda \ltensor_{\Orb_{X_\lambda}}  M_\lambda)\\
    &\simeq \tau^{\geq -t} \LDER a_\lambda^*\RDER f_{\lambda,*}(P_\lambda \ltensor_{\Orb_{X_\lambda}}  M_\lambda),
  \end{align*}
  But $A_\lambda$ is noetherian, $M_\lambda$ is pseudo-coherent (Example \ref{ex:fp-noeth}), and $\Spec A_\lambda \to S_0$ is quasi-affine, so
  $\RDER f_{\lambda,*}(P_\lambda \ltensor_{\Orb_{X_\lambda}}
  M_\lambda)$ is pseudo-coherent by \ref{thm:noetherian-base-changes-derived:qaff-noeth}. Thus,
  $\tau^{\geq -t}\RDER f_*(P \ltensor_{\Orb_X} M)$ is $t$-perfect. As
  $t\geq 0$ was arbitrary, $\RDER f_*(P \ltensor_{\Orb_X} M)$ is
  pseudo-coherent (Example \ref{ex:truncation-perfect}).
\end{proof}
\begin{remark}\label{rem:kiehl-r-perfect}
  The proof of Theorem \ref{thm:noetherian-base-changes-derived} shows
  that if $M \in \Dsqc(X)$ is $r+d(P_0,f_0)$-perfect relative to $S$,
  then $\RDER f_*(\LDER \alpha^*P_0 \ltensor_{\Orb_X} M)$ is
  $r$-perfect relative to $S$. This is a derived analog of
  \cite[Theorem 2.9]{Kiehl1972}.
\end{remark}
\section{Underiving}
We now apply our derived results to the main results of the article.
\begin{proof}[Proof of Theorem \ref{thm:mainprime}]
  Let $\tilde{X} = X_0 \times^{\RDER}_{S_0} S$ and
  $\tilde{Y} = Y_0 \times^{\RDER}_{S_0} S$; then the maps
  $\tilde{f} \colon \tilde{X} \to \tilde{Y}$, $\tilde{Y}\to S$ (and
  hence their composition $\tilde{X} \to S$) are all almost perfect
  (Example \ref{ex:noetherian-ap-alg-rel}). Let
  $\tilde{\alpha} \colon \tilde{X} \to X_0$, $i_X \colon X \to \tilde{X}$ and $i_Y \colon Y \to \tilde{Y}$ be the induced
  morphisms. Then $\tilde{X}_\cl \simeq X$, $\tilde{Y}_\cl \simeq Y$,
  $\tilde{f}_\cl \simeq f$, and $\tilde{\alpha}_{\cl} \simeq
  \alpha$. Set $P=\LDER \alpha^*P_0$; then $\tilde{P}=\LDER \tilde{\alpha}^*P_0$ is $\tilde{Y}$-compact (Proposition \ref{prop:relative-compactness-properties}\ref{prop:relative-compactness-properties:base-change}) and $\LDER i_X^*\tilde{P} \simeq P$. If $M\in \Dsqc(X)$ is $S$-pseudo-coherent, then $\RDER i_{X,*}M$ is $S$-pseudo-coherent (Example \ref{ex:pushforward-closed-rel-perfect}). Then Example \ref{ex:affine-projection-formula} implies that
  \[
    \RDER \tilde{f}_*(\tilde{P} \ltensor_{\Orb_{\tilde{X}}} \RDER i_{X,*}M) \simeq \RDER \tilde{f}_*\RDER i_{X,*}(P \ltensor_{\Orb_X} M) \simeq \RDER i_{Y,*} \RDER f_*(P \ltensor_{\Orb_X}M).
  \]
  Thus, by Example \ref{ex:pushforward-closed-rel-perfect} and Proposition \ref{proposition: rel-abs-agree}, it
  suffices to show
  $\RDER \tilde{f}_*(\tilde{P}
  \ltensor_{\Orb_{\tilde{X}}} - )\colon \Dsqc(\tilde{X}) \to
  \Dsqc(\tilde{Y})$ preserves pseudo-coherent objects, which is just
  Theorem
  \ref{thm:noetherian-base-changes-derived}\ref{thm:noetherian-base-changes-derived:qaff-diag}.
\end{proof}
\begin{remark}\label{rem:kiehl-r-perfect-underived}
  Remark \ref{rem:kiehl-r-perfect} implies that if $M \in \Dqc(X)$ is
  $r+d(P_0,f_0)$-pseudo-coherent relative to $S$, then
  $\RDER f_*(\LDER \alpha^*P_0 \ltensor_{\Orb_X} M)$ is
  $r$-pseudo-coherent relative to $S$.
\end{remark}
\begin{proof}[Proof of Theorem \ref{thm:gms}]
      By Proposition \ref{prop:relative-compactness-properties}, we
  may assume $Y=\Spec B$ and
  $S=\Spec A$.  Since $B$ is a
  finitely generated $A$-algebra, there is a closed immersion of
  $S$-schemes $i \colon Y \hookrightarrow Y'$, where $Y'$ is affine
  and finitely presented over $S$ (e.g., can take $Y'= \A^m_S$ for
  some $m\gg 0$). Note that an $\Orb_Y$-module $M$ is pseudo-coherent
  relative to $S$ if and only if the $\Orb_{Y'}$-module $\RDER i_*M$
  is pseudo-coherent relative to $S$ (Example
  \ref{E:relative-pseudo-push}). Hence, we may replace $f$ with
  $i\circ f$ and assume that $Y \to S$ is finitely presented.
  
  By \cite[Corollary C \& Theorem
  5.10]{rydh2023absolutenoetherianapproximationalgebraic}, there is a
  closed immersion $j \colon X \hookrightarrow X'$, where
  $f'\colon X' \to Y$ is of finite presentation with affine diagonal
  and admits a proper good moduli space
  $f_{\gms}' \colon X_{\gms}' \to Y$. Then
  $\RDER f_* \simeq \RDER f'_* \RDER j_*$ and $\RDER j_*$ preserves
  $S$-pseudo-coherence (Example \ref{E:relative-pseudo-push}). Hence,
  we may now replace $X$ and $f$ with $X'$ and $f'$, so that $f$ is
  finitely presented. We now do relative noetherian approximation
  (e.g., combine \cite{Rydh_2015} with \cite[Corollary
  7.5]{MR5068605}): this produces a noetherian subalgebra
  $A_0 \subseteq A$, a finite type $A_0$-algebra $B_0$ with an
  $A_0$-algebra map $B_0 \to B$ such that
  $B_0 \otimes_{A_0} A \simeq B$, and a finite type morphism with
  affine diagonal $f_0 \colon X_0 \to \Spec B_0$ that admits a proper
  good moduli space such that $f_0 \otimes_{A_0} A \simeq
  f$. Combining \cite[Theorem 6.3.3]{alper_gms} with
  \cite[\href{https://stacks.math.columbia.edu/tag/08AR}{Tag
    08AR}]{stax}, we see that $f_0 \colon X_0 \to \Spec B_0$ is
  cohomologically proper. The result now follows from Theorem
  \ref{thm:mainprime}.
\end{proof}
\begin{remark}
    The proof of Theorem \ref{thm:gms} shows that if we have a factorization $X \xrightarrow{\pi} X_{\gms} \xrightarrow{f_{\gms}} Y$, where $X \to Y$ is locally of finite type and $\pi$ is a good moduli space, then $f_{\gms}$ is of finite type. This is a finite type variant of \cite[Theorem 6.1]{MR5068605}.
\end{remark}
\begin{proof}[Proof of Theorem \ref{thm:firstprime}]
  By Proposition \ref{prop:relative-compactness-properties}, we
  immediately reduce to the situation where $Y=\Spec B$ and
  $S=\Spec A$ are affine schemes and $P$ is a compact object of
  $\Dqc(X)$. Arguing as in the proof of Theorem \ref{thm:gms}, we may assume that $Y \to S$ is finitely presented and there exist a 
  closed immersion $\imath \colon X \hookrightarrow X'$ and a proper
  and finitely presented morphism with finite diagonal
  $f'\colon X' \to Y$. Let $P'$ be a compact generator of $\Dqc(X')$
  \cite[Theorem A]{perfect_stax}; then $\LDER \imath^*P'$ compactly
  generates $\Dqc(X)$ \cite[Lemma 8.2]{perfect_stax}. In particular,
  $P$ lies in the smallest thick triangulated subcategory containing
  $\LDER\imath^*P'$ (Thomason's Localization Theorem \cite[Theorem
  3.2]{perfect_stax}). Since the collection of compacts $Q$ where $\RDER f_*(Q \ltensor_{\Orb_X} - )$ preserves $S$-pseudo-coherence is thick, it suffices to prove that $
    \RDER f_*( \LDER \imath^*P' \ltensor_{\Orb_X} - ) \simeq \RDER f'_*
    \RDER \imath_*(\LDER \imath^*P' \ltensor_{\Orb_X} - ) \simeq \RDER
    f'_*(P'\ltensor_{\Orb_{X'}} \RDER \imath_*(-))$
  preserves pseudo-coherent complexes relative to $S$ for any compact generator $P'$ of $\Dqc(X')$. Since $\RDER \imath_*$ preserves $S$-pseudo-coherence (Example
  \ref{E:relative-pseudo-push}), we may replace $X$ by $X'$ and
  assume that $f$ is also finitely presented.

  We now do relative noetherian approximation \cite{Rydh_2015}: this
  produces a finite type $\Z$-subalgebra $A_0 \subseteq A$, a finite
  type $A_0$-algebra $B_0$ with an $A_0$-algebra map $B_0 \to B$ such
  that $B_0 \otimes_{A_0} A \simeq B$, and a proper morphism with
  finite diagonal $f_0 \colon X_0 \to \Spec B_0$ such that
  $f_0 \otimes_{A_0} A \simeq f$. If $Q_0$ is a compact generator of $\Dqc(X_0)$, then
   $Q=\LDER \alpha^*Q_0$ compactly generates $\Dqc(X)$, where
  $\alpha \colon X \to X_0$ is the induced projection \cite[Lemma 8.2]{perfect_stax}. Now combine Theorem
  \ref{thm:mainprime} with \cite[Theorem 1.2]{proper_coverings}.
\end{proof}
\begin{lemma}\label{lem:commutative-algebra-conditions}
  Let $f\colon X\to Y$ be a quasi-compact and quasi-separated morphism
  of algebraic stacks, where $X$ and $Y=\Spec B$ are locally of finite
  type over an affine scheme $S=\Spec A$. Let $P \in \Dqc(X)$ be
  $Y$-compact. If $\RDER f_*(P \ltensor_{\Orb_X} -)$ preserves
  pseudo-coherent complexes relative to $S$, then it preserves
  complexes that are
\begin{enumerate}[label = \normalfont(\roman*)]
\item \label{lem:commutative-algebra-conditions:pcpr} pseudo-coherent and perfect relative to $S$;
\item \label{lem:commutative-algebra-conditions:pc} pseudo-coherent if $f$ is pseudo-coherent and $Y=S$;
\item \label{lem:commutative-algebra-conditions:perf} perfect if $f$ is perfect and $Y=S$.
\end{enumerate}
\end{lemma}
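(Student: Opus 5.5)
We treat the three parts in turn; in each, pseudo-coherence of $\RDER f_*(P\ltensor_{\Orb_X} M)$ relative to $S$ is given by hypothesis, and only the extra finiteness has to be checked.

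\textbf{Part (i): Tor-amplitude.} Let $M$ be pseudo-coherent and perfect relative to $S$, so $M$ has finite Tor-dimension over $A$. We must show $\RDER f_*(P\ltensor_{\Orb_X} M)$ has finite Tor-dimension over $A$; since it is pseudo-coherent relative to $S$, this gives relative perfection. The plan is to test against discrete $A$-modules $N$ and use the projection formula of Proposition \ref{prop:relative-compactness-properties}\ref{prop:relative-compactness-properties:projection}, applied with $P$ being $Y$-compact. Since $Y=\Spec B$ is affine over $\Spec A$, tensoring over $A$ is the same as tensoring over $B$ with $B\ltensor_A N$. This gives
\[
\RDER f_*(P\ltensor_{\Orb_X} M)\ltensor_A N\simeq \RDER f_*(P\ltensor_{\Orb_X} M\ltensor_A N).
\]
If $M$ has Tor-amplitude $[a,b]$ over $A$, then $M\ltensor_A N$ is concentrated in degrees $[a,b]$. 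Because $P$ is perfect and $Y$-compact with $Y$ affine, Lemma \ref{lem:strongproj}\ref{lem:strongproj:fcd} together with the boundedness of $P$ shows that $\RDER f_*(P\ltensor -)$ sends objects with cohomology in $[a,b]$ to objects with cohomology in $[a-c,\,b+d(P)+c']$. The constants $c,c'$ depend only on the amplitude of $P$. This gives uniform Tor-amplitude, as required.

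\textbf{Parts (ii) and (iii).} Now $Y=S$. For (ii), $f$ is pseudo-coherent, so $\Orb_X$ is pseudo-coherent relative to $S$, and by Example \ref{E:noetherian-smooth-pcoh-relative} (Tag 064Z, smooth locally on $X$) pseudo-coherence and pseudo-coherence relative to $S$ agree on $X$. On $S$ itself, relative and absolute pseudo-coherence agree trivially, since $A\to A$ is a pseudo-coherent algebra. So the hypothesis transfers directly. For (iii), $f$ is perfect, so perfection and relative perfection agree on $X$, and also on $S$. A perfect $M$ on $X$ is then pseudo-coherent and perfect relative to $S$, so by (i) the output is perfect relative to $S=Y$, hence perfect.

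\textbf{Main obstacle.} The main work is the amplitude bound in (i). Projection formula and base change only control $\RDER f_*$ after tensoring with $\LDER f^*N$, and one has to make sure the $Y$-compactness bound $d(P,f)$ is uniform in $N$. I would take it from Proposition \ref{prop:relative-compactness-properties}\ref{prop:relative-compactness-properties:bound} with $S$ replaced by the quasi-compact $Y$. That statement is uniform in all $M$, so truncating $M\ltensor_A N$ from below gives the upper bound. The lower bound is automatic because $\RDER f_*$ is left $t$-exact up to the amplitude of $P$.
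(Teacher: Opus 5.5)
Your proposal is correct and follows essentially the same route as the paper. For (i), you use the projection formula with $B\ltensor_A N$ to reduce finite Tor-amplitude of $\RDER f_*(P\ltensor_{\Orb_X} M)$ over $A$ to a uniform amplitude bound for $\RDER f_*(P\ltensor_{\Orb_X} -)$ on complexes with cohomology in a fixed range. You make that bound explicit via $d(P,f)$ and left $t$-exactness, where the paper only asserts it; your extra constant $c'$ is unnecessary, since $d(P)$ already accounts for $P$. Parts (ii) and (iii) are deduced exactly as in the paper: you identify absolute and relative pseudo-coherence (resp. perfection) when $f$ is pseudo-coherent (resp. perfect), and then for (iii) invoke (i).
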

\begin{proof}
    For \ref{lem:commutative-algebra-conditions:pcpr}: let $M$ be pseudo-coherent and perfect
  relative to $S$. Since 
  $\RDER f_*(P \ltensor_{\Orb_X} M)$ is pseudo-coherent relative to
  $S$, it suffices to prove that it is perfect relative to $S$. Let $N \in \Mod(A)$; then Proposition \ref{prop:relative-compactness-properties}\ref{prop:relative-compactness-properties:projection} implies that 
  \begin{align*}
    \RDER f_*(P \ltensor_{\Orb_X} M) \ltensor_A N &\simeq   \RDER f_*(P \ltensor_{\Orb_X} M) \ltensor_B (B\ltensor_A N)\\
                                                &\simeq \RDER f_*(P \ltensor_{\Orb_X} M \ltensor_{\Orb_X} \LDER f^*(B \ltensor_A N))\\
    &\simeq \RDER f_*(P \ltensor_{\Orb_X} (M \ltensor_{A} N)).
  \end{align*}
  Since $M$ is perfect relative to $S$, $M \ltensor_{A} N$ has
  amplitude contained in $[-a,a]$ for some $a\geq 0$ (independent of
  $N$). It follows that
  $\RDER f_*(P \ltensor_{\Orb_X} (M \ltensor_{A} N))$ has amplitude
  contained in some $[-a-r,a+r]$ for some $r$ independent of $N$. That
  is, $ \RDER f_*(P \ltensor_{\Orb_X} M)$ has finite tor-amplitude
  relative to $A$, so is perfect relative to $S$.

  For \ref{lem:commutative-algebra-conditions:pc}: since $f$ is
  pseudo-coherent, a complex on $X$ is pseudo-coherent relative to $S$
  if and only if it is pseudo-coherent (Example
  \ref{E:noetherian-smooth-pcoh-relative}). Hence, if $M \in \Dqc(X)$
  is pseudo-coherent, it is pseudo-coherent relative to $S$. Thus,
  $\RDER f_*(P \ltensor_{\Orb_X} M)$ is pseudo-coherent relative to
  $S=Y$, which gives the claim. For \ref{lem:commutative-algebra-conditions:perf}: combine \ref{lem:commutative-algebra-conditions:pcpr} and
  \ref{lem:commutative-algebra-conditions:pc}.  
\end{proof}
\begin{proof}[Proof of Corollary \ref{cor:tame-perfect}]
    By Theorem \ref{thm:tame-firstprime} with $Y=S$, $\RDER f_*$ preserves $S$-pseudo-coherent complexes. Now apply Lemma \ref{lem:commutative-algebra-conditions}\ref{lem:commutative-algebra-conditions:perf}.
\end{proof}
\appendix
\section{Flatness and finiteness}\label{APP:flatness}
Recall that the
$\infty$-category of \emph{animated rings} $\AniRing$ is the $\infty$-category
freely generated under sifted colimits by finite polynomial rings over
$\Z$. This is equivalent to the localisation of the category of
simplicial rings at the weak equivalences. For
background material, the reader is referred to \cite[\S5]{purity_flat_cohoom},
\cite[\S2.7 \& \S25]{SAG}, and \cite[\S7.2.4]{Lurie-HA}.
\begin{example}\label{E:ring-ani}
    If $\Ring$ denotes the category of rings, then there is a fully faithful embedding $i\colon \Ring \to \AniRing$. The functor $\pi_0 \colon \AniRing \to \Ring$ is left adjoint to $i$.
\end{example}
Let $A$ be an animated ring. Let $\Ds(A)$ be the stable
$\infty$-category of $A$-modules. Note that there is an equivalence between connective $A$-modules
(i.e., those $M$ with $\H^{i}(M)=\pi_{-i}(M) = 0$ for all $i>0$) and animated
$A$-modules. There also is a standard $t$-structure on $\Ds(A)$, with
$\Ds(A)^{\leq 0}=\Ds(A)_{\geq 0}$ consisting of connective $A$-modules, and an
equivalence of abelian categories
$\Ds(A)^{\heartsuit} \simeq \Mod(\pi_0(A))$.  If $A$ is
\emph{discrete} (i.e., $A \simeq \pi_0(A)$), then there is a
triangulated equivalence between the homotopy category
$\mathrm{Ho}(\Ds(A))$ and $\D(\pi_0(A))$.

We now briefly recall some results on flatness in the animated
setting. Let $A$ be an animated ring. An $A$-module $M$ is \emph{flat}
if $\H^0(M)$ is a flat $\pi_0(A)$-module and the induced map
$\pi_n(A) \otimes_{\pi_0(A)} \H^0(M) \to \H^{-n}(M)$ is an isomorphism
of abelian groups for all $n\in \Z$
\cite[Definition 7.2.2.10]{Lurie-HA}. In particular, since $A$ is
connective, flat $A$-modules are always connective. A flat $A$-module
$M$ is \emph{faithfully} flat if $M \ltensor_A N \simeq 0$ implies that
$N\simeq 0$ for any $A$-module $N$. We say that an $A$-algebra $C$ is
\emph{(faithfully) flat} if its underlying $A$-module is so. We also
say that a flat and almost finitely presented $A$-algebra $C$ is
\emph{smooth} or \emph{\'etale} if the induced map
$\pi_0(A) \to \pi_0(C)$ is a smooth or \'etale map of classical rings,
respectively.
\begin{example}\label{E:faithfully-flat-discrete}
  Let $A$ be a discrete animated ring. An $A$-module $M$ is flat if
  and only if $\H^i(M) = 0$ for all $i\neq 0$ and $\H^0(M)$ is a flat
  $\pi_0(A)$-module.
\end{example}
\begin{example}\label{E:faithfully-flat-derived}
  Let $A$ be an animated ring. An $A$-module $M$ is faithfully flat if
  and only if $M$ is a flat $A$-module and
  $\H^0(M)\otimes_{\pi_0(A)} \kappa(\mathfrak{p}) \neq 0$ for all
  prime ideals $\mathfrak{p} \subseteq \pi_0(A)$. The condition is
  certainly necessary. For the sufficiency, it suffices to prove
  $M \ltensor_A N \simeq 0$ implies that $N\simeq 0$. Since $M$ is a
  flat $A$-module, it follows that
  $\H^n(M \ltensor_A N) \simeq \H^0(M) \otimes_{\pi_0(A)} \H^n(N)$ for
  all $n\in \Z$ \cite[Proposition 7.2.2.13]{Lurie-HA}. But the residue field
  condition on $\H^0(M)$ implies that it is a faithfully flat
  $\pi_0(A)$-module
  \cite[\href{https://stacks.math.columbia.edu/tag/00HP}{Tag
    00HP}]{stax}, so $\H^n(N) = 0$ for all $n\in \Z$. Hence,
  $N \simeq 0$ and $M$ is a faithfully flat $A$-module.
\end{example}
\begin{example}\label{E:faithfully-flat-algebra}
  A flat $A$-algebra $C$ is faithfully flat if and only if the induced
  morphism $\Spec \pi_0(C) \to \Spec \pi_0(A)$ is surjective. This
  follows immediately from Example \ref{E:faithfully-flat-derived}.
\end{example}
We next show that relative perfection descends along
faithfully flat algebras.
\begin{proposition}\label{P:flat-descent-r-perfect}
  Let $A \to B$ be a map of animated rings and let $M$ be a
  $B$-module. Let $A \to C$ be a faithfully
  flat $A$-algebra. Let $r\in \Z$. If the $B\ltensor_AC$-module
  $M \ltensor_A C$ is $r$-perfect relative to $C$, then $M$ is
  $r$-perfect relative to $A$.
\end{proposition}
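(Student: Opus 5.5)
The plan is to reduce to absolute $r$-perfection over a polynomial ring, and then to use faithfully flat descent of $r$-perfection for modules. The argument has three steps.

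\emph{Step 1: finite type.} First I show that $\pi_0(A) \to \pi_0(B)$ is of finite type. By hypothesis, $\pi_0(C) \to \pi_0(B \ltensor_A C) \simeq \pi_0(B)\otimes_{\pi_0(A)}\pi_0(C)$ is of finite type. Since $\pi_0(A)\to\pi_0(C)$ is faithfully flat (Example \ref{E:faithfully-flat-algebra}), fpqc descent of finite type for algebras applies (cf.~\cite[\href{https://stacks.math.columbia.edu/tag/00QP}{Tag 00QP}]{stax}), so $\pi_0(A)\to\pi_0(B)$ is of finite type. Choose a presentation $A' := A[x_1,\dots,x_n] \to B$ that is surjective on $\pi_0$. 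Its base change $C' := C[x_1,\dots,x_n] \to B\ltensor_A C$ is then also surjective on $\pi_0$. By Remark \ref{rem:well-defined}, $M\ltensor_A C$ is an $r$-perfect $C'$-module, and $C' \simeq A'\ltensor_A C$ is faithfully flat over $A'$.

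\emph{Step 2: reduction to absolute descent.} After Step 1, it suffices to prove the following claim: if $A'\to C'$ is faithfully flat and $N$ is an $A'$-module with $N\ltensor_{A'} C'$ an $r$-perfect $C'$-module, then $N$ is $r$-perfect. Here $N$ is the $B$-module $M$ regarded as an $A'$-module, and $N\ltensor_{A'}C' \simeq M\ltensor_A C$. This is the derived analogue of \cite[\href{https://stacks.math.columbia.edu/tag/068T}{Tag 068T}]{stax}. For the proof I would cite \cite[Proposition 2.8.4.2]{SAG}, which says that almost perfect and $r$-perfect modules (finitely $n$-presented, in Lurie's terminology) descend along faithfully flat maps. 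If a self-contained argument is preferred, I would proceed as follows.

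\emph{Step 3: the descent argument.} First, $N$ is almost connective. By flatness, $\H^i(N\ltensor_{A'}C') \simeq \H^i(N)\otimes_{\pi_0(A')}\pi_0(C')$ (Example \ref{E:faithfully-flat-derived}), and faithful flatness then shows that $\H^i(N)=0$ whenever $\H^i(N\ltensor_{A'}C')=0$. Shifting, we may assume $N$ is connective. By Example \ref{ex:truncation-perfect}, we may replace $N$ by a truncation, so that $N$ is bounded. We then induct on $r$, starting from the top nonzero degree, following the classical proof.

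\begin{itemize}
\item \emph{Top degree.} $\H^0(N)\otimes\pi_0(C')$ is finitely generated, so $\H^0(N)$ is finitely generated by classical descent. Lift finitely many generators to a map $F = A'^{\oplus m}\to N$ that is surjective on $\H^0$.
\item \emph{Inductive step.} Let $K$ be the fiber of $F \to N$. Flat base change commutes with fibers, so $K\ltensor_{A'}C'$ is $(r-1)$-perfect, using the standard two-out-of-three behaviour of $r$-perfection in fiber sequences \cite[\S2.7]{SAG}. By induction, $K$ is $(r-1)$-perfect, and hence $N$ is $r$-perfect.
\end{itemize}

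The main obstacle is bookkeeping the indices in this two-out-of-three step: the fiber of a map from a perfect module that is surjective on $\H^0$ loses exactly one degree of perfection, and faithful flatness has to be used to detect both the vanishing and the finite generation of homotopy groups. Once the absolute claim of Step 2 holds for $N$ over $A'$, $M$ is $r$-perfect relative to $A$ by definition.
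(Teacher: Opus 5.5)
Your proof is correct, but it is organised differently from the paper's.

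The paper's route:
\begin{enumerate}
\item It first checks that $M$ is almost connective, because that is the hypothesis needed to invoke Example \ref{ex:relperf-nilp}.
\item It then base-changes along $A \to \pi_0(A)$, so that by flatness $C$ becomes $\pi_0(C)$, reducing to $A$ and $C$ discrete.
\item It checks that $\pi_0(A)\to\pi_0(B)$ is of finite type and picks a presentation $A[x_1,\dots,x_n]\to B$.
\item It quotes the classical descent of pseudo-coherence along the faithfully flat map $A[x_1,\dots,x_n]\to C[x_1,\dots,x_n]$ (Tag 068R of \cite{stax}), translated via Example \ref{ex:pcoh-aperf-compare-discrete}.
\end{enumerate}

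Your route picks the presentation first and stays over the animated polynomial ring $A'$. You identify $M\ltensor_A C$ with $N\ltensor_{A'}C'$ as $C'$-modules, and then prove, or cite, faithfully flat descent of $r$-perfection directly for animated rings. Your self-contained Step 3 is sound:
\begin{itemize}
\item Almost connectivity and finite generation of $\H^0$ descend, because $\H^i(N\ltensor_{A'}C')\simeq \H^i(N)\otimes_{\pi_0(A')}\pi_0(C')$.
\item The fiber $K$ of $A'^{\oplus m}\to N$ is connective.
\item The two-out-of-three behaviour of $r$-perfection in fiber sequences drives the induction on $r$.
\item The truncation step is harmless but not needed.
\end{itemize}

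The trade-off is as follows. The paper's argument is shorter: it reuses Example \ref{ex:relperf-nilp}, which rests on \cite[Proposition 2.7.3.2]{SAG}, and offloads the real work to a classical statement. Almost connectivity is needed there only to feed that nilpotent reduction. Your argument avoids both the nilpotent-invariance input and the passage to $\pi_0$. In effect it transcribes the classical Stacks induction to animated rings, which is exactly what makes the reduction to the discrete case unnecessary. In your proof, almost connectivity becomes the first step of the induction rather than a hypothesis for a reduction.

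One terminological point if you rely on the citation instead. The paper's ``$r$-perfect'' corresponds to Lurie's ``perfect to order $r$''. ``Finitely $n$-presented'' means connective, $n$-truncated and $(n+1)$-perfect, as in the paper's appendix. So if the result you cite is phrased only for the latter, you need the easy translation via Example \ref{ex:truncation-perfect} together with $t$-exactness of flat base change.
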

\begin{proof}
  Since $A \to C$ is flat,
  $\H^n(M \ltensor_A C) \simeq \H^n(M) \otimes_{\pi_0(A)} \pi_0(C)$
  \cite[Proposition 7.2.2.13]{Lurie-HA}. Since $M \ltensor_A C$ is
  $r$-perfect relative to $C$, it follows that it is almost
  connective, so $\H^n(M) \otimes_{\pi_0(A)} \pi_0(C) = 0$ for all
  $n \gg 0$.  But $A \to C$ is faithfully flat, so $\H^n(M) = 0$ for
  all $n \gg 0$ and $M$ is almost connective. By Example
  \ref{ex:relperf-nilp}, we may thus assume that $A$ and $C$ are
  discrete and $M$ is almost connective. Arguing similarly, we also
  see that $\pi_0(A) \to \pi_0(B)$ is of finite type. Now choose
  $b_1$, $\dots$, $b_n \in \pi_0(B)$ such that the induced map
  $A[x_1,\dots,x_n] \to B$ induces a surjection on $\pi_0$. Since
  $M \ltensor_A C$ is $r$-perfect relative to $C$, $M \ltensor_A C$ is
  a $r$-perfect $C[x_1,\dots,x_n]$-module. By
  \cite[\href{https://stacks.math.columbia.edu/tag/068R}{Tag
    068R}]{stax}, $M$ is an $r$-perfect $A[x_1,\dots,x_n]$-module, so $M$ is an
  $r$-perfect $B$-module relative to $A$.
\end{proof}

The remainder of this section is not needed for the results in the
paper.
\begin{example}\label{ex:derived-quotient}
  Let $A$ be an animated ring and $f_1$, $\dots$, $f_n \in
  \pi_0(A)$. Set
  $A/\!\!/(f_1,\dots,f_n) := \Z \ltensor_{\Z[\underline{X}]} A$, where
  $\underline{X} = (X_1,\dots,X_n)$, $\Z[\underline{X}] \to \Z$ sends
  the $X_i$ to $0$ and $\Z[\underline{X}] \to A$ sends $X_i$ to
  $f_i$. Since $\Z$ is a perfect $\Z[\underline{X}]$-module, 
  $A/\!\!/(f_1,\dots,f_n)$ is a perfect $A$-module and an almost
  finitely presented $A$-algebra. Also,
  $\pi_0(A/\!\!/(f_1,\dots,f_n)) \simeq
  \pi_0(A)/(f_1,\dots,f_n)$. Note that if $\phi \colon A \to B$ is
  a map of animated rings, then it is easily verified that
  there is an equivalence of $B$-algebras
  $A/\!\!/(f_1,\dots,f_n) \ltensor_A B \simeq
  B/\!\!/(\phi(f_1),\dots,\phi(f_n))$. Similarly, if $R \to A$ is a
  map of animated rings, then
  $R \ltensor_{R[\underline{X}]} A \simeq A/\!\!/(f_1,\dots,f_n)$. 
\end{example}
\begin{example}\label{ex:derived-finite-monogenic}
  Let $A$ be an animated ring. Let $f(t) \in \pi_0(A)[T]$ be a monic
  polynomial of degree $n$. Then $A[T]/\!\!/(f)$ is a free $A$-module
  of rank $n$; in particular, $A[T]/\!\!/(f)$ is a perfect $A$-module
  and an almost perfect $A$-algebra. 
  Indeed, the $T^i$ for $i=0$, $\dots$,
  $n-1$ induce an $A$-module homomorphism
  $A^{\oplus n} \xrightarrow{\psi}A[T]/\!\!/(f)$. By Example
  \ref{ex:aperf-nilp}, it suffices to prove that
  $\psi \ltensor_A \pi_0(A)$ is an equivalence. By Example \ref{ex:derived-quotient},
  \[
    A[T]/\!\!/(f) \ltensor_A \pi_0(A) \simeq     A[T]/\!\!/(f) \ltensor_{A[T]} (A[T] \ltensor_A \pi_0(A)) \simeq \pi_0(A)[T]/\!\!/(f),
  \]
  so we may assume $A$ is discrete. {In this case, it is clear that $\psi$ induces an isomorphism on $\H^0$, and thus it suffices to show that 
  $\H^i(A[T]/\!\!/ (f)) = 0$ for $i < 0$. This follows from the fact that $f$ is monic, and hence the multiplication by $f$ map $A[T] \xrightarrow{\times f} A[T]$ is injective.} 
\end{example}
The following result is similar to
\cite[\href{https://stacks.math.columbia.edu/tag/0673}{Tag
  0673}]{stax}.
\begin{corollary}\label{corollary:pushforward}
  Let $A \to B \to C$ be morphisms of animated rings such that
  $\pi_0(A) \to \pi_0(B)$ is of finite type and
  $\pi_0(B) \to \pi_0(C)$ is finite. Let $M$ be a $C$-module that is
  $r$-perfect relative to $A$; then $M$---regarded as a
  $B$-module---is $r$-perfect relative to $A$.
\end{corollary}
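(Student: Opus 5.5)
The plan is to reduce to Example \ref{ex:pushforward-closed-rel-perfect}. We would insert an intermediate animated ring $B'$ that is almost perfect over $B$ and has $\pi_0(B')$ surjecting onto $\pi_0(C)$, using the derived monogenic construction of Example \ref{ex:derived-finite-monogenic}.

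First, since $\pi_0(B) \to \pi_0(C)$ is finite, choose generators $c_1,\dots,c_n \in \pi_0(C)$ of $\pi_0(C)$ as a $\pi_0(B)$-module. Each $c_i$ is integral over $\pi_0(B)$, so pick monic polynomials $f_i(T_i) \in \pi_0(B)[T_i]$ with $f_i(c_i)=0$. Set
\[
B' := B[T_1]/\!\!/(f_1) \ltensor_B \cdots \ltensor_B B[T_n]/\!\!/(f_n).
\]
By Example \ref{ex:derived-finite-monogenic}, each factor is a free $B$-module of finite rank, so $B'$ is a free $B$-module of finite rank. In particular $B'$ is a perfect $B$-module and an almost finitely presented $B$-algebra. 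Also $\pi_0(B') \simeq \pi_0(B)[T_1,\dots,T_n]/(f_1,\dots,f_n)$ (Example \ref{ex:derived-quotient}).

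Second, we construct a map of $B$-algebras $B' \to C$ sending $T_i \mapsto c_i$. Because $B[T]$ is free on $T$, we get a map $B[T_i] \to C$. To extend it over $B[T_i]/\!\!/(f_i) = B[T_i] \ltensor_{B[T_i][X]} B[T_i]$ (with $X \mapsto f_i$, resp. $X\mapsto 0$), we need a path from $f_i(c_i)$ to $0$ in the underlying space of $C$. Such a path exists since $f_i(c_i)=0$ in $\pi_0(C)$. Tensoring these maps over $B$ gives $B' \to C$. On $\pi_0$ the map is surjective, since the $c_i$ generate $\pi_0(C)$ as a $\pi_0(B)$-module.

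Third, $\pi_0(A)\to\pi_0(B')$ is of finite type, as $\pi_0(B')$ is finite over $\pi_0(B)$, which is of finite type over $\pi_0(A)$. Applying Example \ref{ex:pushforward-closed-rel-perfect} to $A \to B' \to C$, the $C$-module $M$ is $r$-perfect relative to $A$ as a $B'$-module. It remains to pass from $B'$ down to $B$.

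This descent from $B'$ to $B$ is the main obstacle, because the map $B \to B'$ is not surjective on $\pi_0$. We would use Corollary \ref{cor:aperf-relative-abs}.
\begin{itemize}
\item Choose $A[x_1,\dots,x_m] \to B$ surjective on $\pi_0$, and form $D := A[x_1,\dots,x_m,T_1,\dots,T_n] \to B'$, which is surjective on $\pi_0$.
\item Since $M$ is $r$-perfect relative to $A$ as a $B'$-module, $M$ is an $r$-perfect $D$-module by Remark \ref{rem:well-defined}.
\item Set $D' := D/\!\!/(\tilde f_1,\dots,\tilde f_n)$, where $\tilde f_i$ are lifts of the $f_i$ to monic polynomials with coefficients in $\pi_0(A[\underline{x}])$. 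Then $D'$ is a free $A[\underline{x}]$-module of finite rank (Example \ref{ex:derived-finite-monogenic}), and $D \to D'$ makes $D'$ a perfect $D$-module.
\item The map $D \to B'$ factors through $D'$, and $D'\to B'$ is surjective on $\pi_0$. Since $D \to D'$ is almost finitely presented, Proposition \ref{proposition: rel-abs-agree} shows that $M$ is $r$-perfect as a $D'$-module.
\item $D'$ is a finite free $A[\underline{x}]$-module, so restriction of scalars along $A[\underline{x}] \to D'$ preserves $r$-perfection: it preserves perfect modules and connectivity of maps. Hence $M$ is an $r$-perfect $A[\underline{x}]$-module.
\end{itemize}
Since $A[\underline{x}] \to B$ is surjective on $\pi_0$, $M$ is $r$-perfect relative to $A$ as a $B$-module. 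Once this descent step is in place, the earlier steps are routine.
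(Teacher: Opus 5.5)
Your proof is correct and is essentially the paper's argument. Your $D' = A[\underline{x},\underline{T}]/\!\!/(\tilde f_1,\dots,\tilde f_n)$ is exactly the paper's $C'_m = P[\underline{Y}]/\!\!/(f_1(Y_1),\dots,f_m(Y_m))$: a finite free $A[\underline{x}]$-module surjecting on $\pi_0$ onto the target, over which $M$ is shown to be $r$-perfect (via Corollary \ref{cor:aperf-relative-abs} or Proposition \ref{proposition: rel-abs-agree}) before restricting scalars. The only difference is your detour through $B'$, which the paper skips by mapping $C'_m$ directly to $C$ and applying Remark \ref{rem:well-defined} to $A[\underline{x},\underline{T}] \to C$, which is already surjective on $\pi_0$.
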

\begin{proof}
  Choose $b_1$, $\dots$, $b_n \in \pi_0(B)$ such that the induced map
  $P \to B$ induces a surjection
  $\pi_0(P) \to \pi_0(B)$, where
  $P=A[\underline{X}]$ and $\underline{X} = (X_1,\dots,X_n)$. Next
  choose $c_1$, $\dots$, $c_m \in \pi_0(C)$ such that the induced map
  $B[\underline{Y}] \to C$, where $\underline{Y}=(Y_1,\dots,Y_m)$,
  induces a surjection
  $\pi_0(B[\underline{Y}])\simeq \pi_0(B)[\underline{Y}] \to
  \pi_0(C)$. Since $M$ is $r$-perfect relative to $A$, it is an $r$-perfect $P[\underline{Y}]$-module. Hence, it remains to show that $M$ is an $r$-perfect $P$-module.

  The finiteness of $\pi_0(B) \to \pi_0(C)$ says that there are monic
  polynomials $f_j(T) \in \pi_0(P)[T]$ such that
  $f_j(c_j) =0$ in $\pi_0(C)$ for $j=1$, $\dots$, $m$. Let $P_0' = P$ and if $j>0$ let $P_j' = P[Y_1,\dots, Y_j]$. Also let 
  $C'_0 = P'_0$ and if $j>0$ let
  $C'_j=P_j'/\!\!/(f_1(Y_1),\dots,f_j(Y_j))$. Then for all $j\geq 0$, 
  $C'_j$ is perfect as a
  $P_j'$-module and almost perfect as a $P_j'$-algebra (Example
  \ref{ex:derived-quotient}). The diagram:
  \[
    \begin{tikzcd}
      \Z[\underline{S},T] \arrow[r] \arrow[d] & \Z[T] \arrow[r] \arrow[d] & \arrow[d] \Z \\
      P'_j \arrow[r] & C_{j-1}'[Y_j] \arrow[r] &   C'_j,
    \end{tikzcd}
  \]
  where $\underline{S}=(S_1,\dots,S_{j-1})$, $S_i \mapsto f_i(Y_i)$
  and $T \mapsto f_j(Y_j)$, has squares that are homotopy pushouts and
  shows that $C'_j \simeq C'_{j-1}[Y_j]/\!\!/(f_j(Y_j))$. Hence, if
  $j>0$, then $C_j'$ is a finite free $C'_{j-1}$-module (Example
  \ref{ex:derived-finite-monogenic}). By induction, it follows that
  $C'_m=P[\underline{Y}]/\!\!/(f_1(Y_1),\dots,f_m(Y_m))$ is a finite
  free $P$-module. Since $C'_m \to C$ induces a surjection
  $\pi_0(C'_m) \to \pi_0(C)$ and $C'_m$ is an almost perfect
  $A$-algebra, $M$ is an $r$-perfect $C'_m$-module (Corollary
  \ref{cor:aperf-relative-abs}). But $C'_m$ is pseudo-coherent as a
  $P$-module, so $M$ is $r$-perfect as a $P$-module \cite[Lemma
  5.6.1.2]{SAG}.
\end{proof}

\section{Derived algebraic stacks and their quasi-coherent sheaves}\label{APP:derived-stacks}
Here we fix our conventions and recall some definitions and results about
derived algebraic stacks. There are a number of more
comprehensive accounts available. A brief, but non-exhaustive, list is \cite{BZFN,MR3701352,hansenmann_cllc,khan2026lecturesalgebraicstacks,SAG}.
\subsection{Category theory}
By an \emph{$\infty$-category}, we will mean an $\infty$-category in the sense of \cite{HTT}; that is, a simplicial set satisfying the inner Horn filling conditions. These are often also called \emph{$(\infty,1)$-categories}, to emphasise that all $n$-morphisms for $n\geq 2$ are invertible. Following \cite{purity_flat_cohoom}, the \emph{$\infty$-category of} \emph{anima}, denoted $\Ani$, will refer to the $\infty$-category whose objects consist of $\infty$-groupoids (i.e., spaces or Kan complexes), viewed as $\infty$-categories in which every morphism is invertible. If $\mathcal{E}$ is an $\infty$-category and $x$ and $y$ are objects of $\mathcal{E}$, then let $\Map_{\mathcal{E}}(x,y)$ be the morphism space \cite[Definition 1.2.2.1]{HTT}.

We will let $\Cat$ and $\bCat$ denote the $\infty$-categories of small and large $\infty$-categories, respectively \cite[Definition 3.0.0.1]{HTT}. These $\infty$-categories are closed under small limits and colimits. An $\infty$-category $\mathcal{C}$ always has a \emph{core} $\mathcal{C}^{\simeq}$ \cite[Proposition 1.2.5.3]{HTT}, which is the largest subsimplicial set that is Kan. The induced functor $(\cdot)^\simeq \colon \Cat \to \Ani$ is right adjoint to the natural fully faithful functor $\Ani \to \Cat$ and so preserves limits. There is also a version of this for large $\infty$-categories.
\begin{example}\label{ex:limit-explicit}
    Let $I$ be a simplicial set and let $\mathcal{D} \colon I \to \bCat$ be a functor. If $\delta \colon j \to i$ is a map in $I$, then there is an induced functor $\mathcal{D}(\delta)^* \colon \mathcal{D}_j \to \mathcal{D}_i$. Set $\mathcal{D}_{\infty} = \varprojlim_I \mathcal{D}$. An object $d$ of $\mathcal{D}_{\infty}$ corresponds to a collection $\{d_i\}$ such that if $\delta \colon j \to i$ is a map in $I$, then there are equivalences $\mathcal{D}(\delta)^*d_j \simeq d_i$ that are compatible with $I$. More precisely, $d$ is a natural transformation from the constant functor $c\colon I \to \Cat \colon i \mapsto [0]$, where $[0]$ denotes the terminal $\infty$-category, and $\mathcal{D}$. If $e$ is another object of $\mathcal{D}_{\infty}$, then 
    \[
    \Map_{\mathcal{D}_{\infty}}(d,e) \simeq \varprojlim_I \Map_{\mathcal{D}_i}(d_i,e_i).
    \]
  \end{example}
  \begin{example}\label{ex:n-category-truncation}
    Let $\Delta$ be the simplex category. Let
    $\Delta_s \subseteq \Delta$ be the wide subcategory (i.e., the
    same objects, but we only allow those morphisms that are
    injective) \cite[Notation 6.5.3.6]{HTT}. Let $n\geq 1$ and let
    $\tCat_{\leq n} \subseteq \bCat$ be the full subcategory spanned
    by $n$-categories (equivalently, those $\infty$-categories whose
    objects are all $(n-1)$-truncated). Note that $\tCat_{\leq n}$ is
    naturally equivalent to an $(n+1)$-category \cite[Proposition
    2.3.4.18]{HTT}. Let $\Delta_{s,\leq n} \subseteq \Delta_s$ be the
    full subcategory spanned by simplices of degree $\leq n$; then
    $\Delta_{s,\leq n}$ is a finite category. Consider a functor
    $\mathcal{D} \colon \Delta_s \to \bCat$ that factors through
    $\tCat_{\leq n+1}$. Then restriction along
    $\Delta_{s,\leq n+2} \subseteq \Delta_{s}$ induces an equivalence
    $\varprojlim_{\Delta_s} \mathcal{D} \simeq
    \varprojlim_{\Delta_{s,\leq n+2}} \mathcal{D}$.
  \end{example}
Let $\LPr$ denote the (large) $\infty$-category of presentable $\infty$-categories, where we only consider those functors $g^* \colon \mathcal{C} \to \mathcal{D}$ that preserve small colimits. In particular, they admit right adjoints $g_* \colon \mathcal{D} \to \mathcal{C}$. By \cite[Proposition 5.5.3.13]{HTT}, $\LPr$ admits all small limits and the forgetful functor $\LPr \to \bCat$ preserves them. The following simple lemma (cf.\ \cite[Corollary 4.7.5.18]{HTT}), inheriting some of the notation from Example \ref{ex:limit-explicit}, will be very useful to us when we prove base change results (cf.~\cite[Proposition 9.1.5.7]{SAG}). 
\begin{lemma}\label{lem:bc-prl-limits}
   Let $I$ be a simplicial set. Consider a commuting diagram of functors $I \to \LPr$
   \[
   \begin{tikzcd}  
         \mathcal{D}' & \mathcal{D} \arrow[l,"\gamma^*"'] \\
       \mathcal{C}' \arrow[u,"f'^*"] & \arrow[l,"g^*"] \mathcal{C}\mathrlap{.} \arrow[u,"f^*"']
    \end{tikzcd}
   \]
   Denote the limits of all this data with a subscript $\infty$. 
   Assume that for all maps $\delta \colon j \to i$ in $I$, the Beck--Chevalley transformations $\mathcal{C}(\delta)^*g_{j,*} \to g_{i,*}\mathcal{C}'(\delta)^*$, $\mathcal{D}(\delta)^*\gamma_{j,*} \to \gamma_{i,*}\mathcal{D}'(\delta)^*$, and $f_i^*g_{i,*} \to \gamma_{i,*}f_i'^*$ are equivalences.
    \begin{enumerate}[label = \normalfont(\roman*)]
        \item \label{lem:bc-prl-limits:push} If $c'=\{c_i'\}$ is an object of $\mathcal{C}'_{\infty}$, then $g_{\infty,*}c' \simeq \{g_{i,*}c'_i\}$.
        \item \label{lem:bc-prl-limits:beck} 
    The Beck--Chevalley transformation $f_{\infty}^*g_{\infty,*} \to \gamma_{\infty,*}f_{\infty}'^*$ is an equivalence.
    \end{enumerate}
\end{lemma}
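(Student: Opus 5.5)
The plan is to work with the explicit description of objects and mapping spaces in a limit of $\infty$-categories from Example \ref{ex:limit-explicit}, together with the fact that limits in $\LPr$ are computed in $\bCat$. Write $\mathcal{C}_\infty = \varprojlim_I \mathcal{C}$, and similarly for the other three diagrams. The functors $g_\infty^*$, $f_\infty^*$, and so on are the induced functors on limits, and all of them preserve colimits. Each of them therefore has a right adjoint, and the task is to identify these adjoints.

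For \ref{lem:bc-prl-limits:push}, take $c' = \{c_i'\}$ in $\mathcal{C}'_\infty$. First check that $\{g_{i,*}c'_i\}$ is an object of $\mathcal{C}_\infty$. For each $\delta\colon j\to i$ we need an equivalence $\mathcal{C}(\delta)^*g_{j,*}c_j' \simeq g_{i,*}c_i'$, compatible over $I$. This is the Beck--Chevalley equivalence $\mathcal{C}(\delta)^*g_{j,*} \simeq g_{i,*}\mathcal{C}'(\delta)^*$ composed with the structure equivalence $\mathcal{C}'(\delta)^*c_j'\simeq c_i'$.

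The compatibility with composition should not be checked by hand. Instead, pass to right adjoints: the diagram $i\mapsto g_i^*$ is a natural transformation of functors $I\to\LPr$. The Beck--Chevalley hypothesis says exactly that the mate diagram $i\mapsto g_{i,*}$ is also a natural transformation of functors $I\to\bCat$, using the same transition functors $\mathcal{C}(\delta)^*$ and $\mathcal{C}'(\delta)^*$. This is the content of \cite[Corollary 4.7.5.18]{HTT}, or its dual form with left adjoints replaced by right adjoints. Taking limits then gives a functor $G\colon \mathcal{C}'_\infty\to\mathcal{C}_\infty$, $\{c'_i\}\mapsto\{g_{i,*}c'_i\}$.

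It remains to show that $G$ is right adjoint to $g_\infty^*$. For $c=\{c_i\}$, Example \ref{ex:limit-explicit} gives
\[
\Map(g_\infty^*c,c') \simeq \varprojlim_I \Map(g_i^*c_i,c_i') \simeq \varprojlim_I \Map(c_i,g_{i,*}c'_i) \simeq \Map(c,Gc').
\]
The middle step uses the levelwise adjunctions. These assemble over $I$ precisely because the units and counits are compatible with the transition maps, which is again the Beck--Chevalley condition. By uniqueness of adjoints, $G\simeq g_{\infty,*}$, which proves \ref{lem:bc-prl-limits:push}. The same argument, applied to the $\gamma$-square using the second hypothesis, shows that $\gamma_{\infty,*}d'\simeq\{\gamma_{i,*}d'_i\}$.

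For \ref{lem:bc-prl-limits:beck}, evaluate both sides at $c'$. By the two descriptions just obtained, $f_\infty^*g_{\infty,*}c' \simeq \{f_i^*g_{i,*}c_i'\}$ and $\gamma_{\infty,*}f_\infty'^*c'\simeq\{\gamma_{i,*}f_i'^*c_i'\}$. The Beck--Chevalley map on limits is, componentwise, the levelwise map $f_i^*g_{i,*}\to\gamma_{i,*}f_i'^*$, since the unit and counit of the limit adjunction are computed levelwise by the previous paragraph. Each of these levelwise maps is an equivalence by hypothesis. Equivalences in a limit of $\infty$-categories are detected componentwise, because mapping spaces are limits of mapping spaces. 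Hence the map on limits is an equivalence.

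The main obstacle I anticipate is the coherence step: making precise that the right adjoints $g_{i,*}$ form a natural transformation and that the levelwise adjunctions glue to an adjunction on limits. I plan to handle this by citing the relative adjunction and mate machinery of \cite[\S7.3.2]{Lurie-HA} and \cite[Corollary 4.7.5.18]{HTT}, rather than checking higher compatibilities explicitly.
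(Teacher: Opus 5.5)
Your proposal is correct and follows the same route as the paper: (i) comes from the explicit description of objects and mapping spaces in a limit, with the adjointable-diagram result the paper also points to, and (ii) comes from evaluating the Beck--Chevalley map componentwise, using (i) for both $g$ and $\gamma$ together with the levelwise hypothesis $f_i^*g_{i,*}\simeq\gamma_{i,*}f_i'^*$. The only difference is that you spell out the coherence and adjunction details that the paper's two-line proof leaves implicit.
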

\begin{proof}
    Claim \ref{lem:bc-prl-limits:push} follows immediately from the explicit description of limits in Example \ref{ex:limit-explicit}.  Part \ref{lem:bc-prl-limits:beck} follows from \ref{lem:bc-prl-limits:push} and the Beck--Chevalley equivalences.
\end{proof}
\subsection{Derived algebraic stacks}
For our definitions and conventions in derived algebraic geometry, we will essentially follow \cite{khan2026lecturesalgebraicstacks}. Let $\Aff$ be the opposite of the $\infty$-category of $\AniRing$. We refer to objects of $\Aff$ as \emph{affine (derived) schemes} and (by abuse of notation) denote the affine derived scheme
associated to an animated ring $A$ by $\Spec A$. A \emph{(derived) prestack} is just a presheaf on $\Aff$. 

We now topologize  $\Aff$. The \'etale (resp.~Zariski) covering families are those finite families
$\{\Spec A_i \to \Spec A\}_{i=1}^r$, where $A \to A_i$ is \'etale (resp.~there is an $A$-algebra equivalence $A_i\simeq A[f_i^{-1}]$ for some $f_i \in \pi_0(A)$) such that $A \to \prod_{i=1}^r A_i$ is faithfully flat. It follows that we can then consider the $\infty$-category $\STK=\SHV(\Aff)$ of sheaves with respect to the \'etale topology, which we call \emph{(derived)
  stacks}. By Yoneda, there is an induced fully faithful embedding
$\Aff \to \STK$.

A \emph{derived scheme} is a derived stack that is Zariski-locally affine. A \emph{derived algebraic space} is a derived stack with diagonal that is representable by monomorphisms of derived schemes and which admits an \'etale cover by a derived scheme. Finally, a \emph{derived algebraic stack} is a derived stack with diagonal that is representable by derived algebraic spaces and which admits a smooth cover by a derived algebraic space. That is, our derived algebraic stacks are the ``derived $1$-Artin stacks'' of \cite[Definition 2.37]{khan2026lecturesalgebraicstacks}. 

Given a prestack $X \colon \AniRing \to \Ani$, its \emph{classical truncation} $X_{\cl}$ is the composition $\Ring \xrightarrow{i} \AniRing \xrightarrow{X} \Ani$ (Example \ref{E:ring-ani}). If $X$ is a derived scheme, algebraic space, or algebraic stack, then $X_\cl$ is a scheme, algebraic space, or algebraic stack in the sense of \cite{stax}. 
Conversely, given a pseudo-functor $Y \colon \Ring \to \mathsf{Grpd} $, there is an associated derived algebraic stack $i(Y)$, which is the right Kan extension of $Y \colon \Ring \to \mathsf{Grpd} \to \Ani$ along $i \colon \Ring \to \AniRing$, where $\mathsf{Grpd}$ denotes the $2$-category of groupoids. If $Y$ is an algebraic stack, then $i(Y)$ is a derived algebraic stack. These assignments are functorial and the functor from classical algebraic stacks to derived algebraic stacks is fully faithful. 

A derived algebraic stack is \emph{locally noetherian} if it is smooth-locally $\Spec A$, where $A$ is a derived noetherian ring. As expected, properties of maps of animated rings that are local on the source and target for smooth coverings can be transported to morphisms of derived algebraic stacks (e.g., almost finitely presented, smooth, flat). Purely topological properties (e.g., quasi-compactness) for morphisms of classical algebraic stacks transfer naturally to morphisms of derived algebraic stacks by considering the underlying classical morphisms. For example, a morphism of derived algebraic stacks $f \colon X \to S$ is \emph{locally almost of finite presentation} if for every commuting diagram:
\[
\begin{tikzcd}
    \Spec B\arrow[r] \arrow[d] & X \arrow[d, "f"] \\
\Spec A \arrow[r]                & S,
\end{tikzcd}
\]
where $\Spec B \to X$ and $\Spec A \to S$ are smooth, $B$ is an almost finitely presented $A$-algebra. We say that $f$ is \emph{almost finitely presented} if it is quasi-compact, quasi-separated, and locally almost of finite presentation.
\subsection{Quasi-coherent sheaves}
If $X$ is a derived stack, then we define the category of quasi-coherent sheaves on $X$ as the limit (taken in $\infty$-categories):
\[
\Dsqc(X) := \varprojlim_{\Spec A \to X} \Ds(A).
\]
Since the $\Ds(A)$ are all stable and the maps in the defining system are functors of stable $\infty$-categories, it follows that $\Dsqc(X)$ is stable.  
\begin{example}\label{ex:affine-mod}
    If $R$ is an animated ring and $X=\Spec R$, then $\Dsqc(X) \simeq \Ds(R)$. More generally, if $\{R_i\}_{i\in I}$ is a set of animated rings, then $\Dsqc(\amalg_i \Spec R_i) \simeq \prod_i \Ds(R_i)$. 
\end{example}
If $X$ is a derived algebraic stack, then there is an equivalence:
\[
\Dsqc(X) \simeq \varprojlim_{\Spec A \xrightarrow{\textnormal{smooth}} X} \Ds(A).
\]
In addition, if $u \colon U_0 \to X$ is a smooth cover of $X$ by a derived algebraic space, then 
\[
\Dsqc(X) \simeq \varprojlim_{[k] \in \Delta_s} \Dsqc(U_k),
\]
where $U_\bullet$ denotes the \v{C}ech nerve of $u$; in particular, each $U_k=U_0 \times_X \cdots \times_X U_0$ is a derived algebraic space. 
Bootstrapping this to derived algebraic spaces and using covers by disjoint unions of affine derived schemes, then to derived algebraic spaces with affine diagonal, we can use Example \ref{ex:affine-mod} to write:
\begin{equation}
    \Dsqc(X) \simeq \varprojlim_{i\in I} \Ds(A_i), \label{eq:small-limit-dsqc}    
\end{equation}
where $I$ is a small $1$-category and there are smooth maps $\Spec A_i \to X$ such that the transition maps $\Spec A_j \to \Spec A_i$ are smooth. In particular, $\Dsqc(X)$ is presentable \cite[Proposition 5.5.3.13]{HTT}.  We will freely use that $\Dsqc(X)$ also admits a $\ltensor$-structure that is compatible with colimits. 
\begin{example}\label{ex:classical-qcoh}
  If $X$ is a classical algebraic stack then there is a triangulated equivalence $\mathrm{Ho}(\Dsqc(iX)) \simeq \Dqc(X)$ \cite[Proposition 1.3]{perfect_stax}.
\end{example}
If $X$ is a derived algebraic stack, then there is also the \emph{standard $t$-structure} on $\Dsqc(X)$ with aisle $\Dsqc^{\leq 0}(X)$ spanned by the connective objects of $\Dsqc(X)$. For background on $t$-structures, see \cite[\S1.2.1, \S1.4.2]{Lurie-HA}. Note that if $\Spec A \to X$ is smooth, then the restriction $\Dsqc(X) \to \Ds(A)$ is $t$-exact.
\begin{remark}\label{rem:heart-derived}
    There is a natural equivalence of abelian categories between the heart of the standard $t$-structure $\Dsqc(X)^\heartsuit$ and $\qc(X_{\cl})$ (cf.~\cite[\S2.2.6]{SAG}).
\end{remark} 
The following is well-known \cite[Proposition 2.2.5.4, Proposition 9.1.3.1, Corollary 9.1.3.2, and Remark 9.1.3.3]{SAG} (cf.~\cite[Corollary 3.1.5.6]{MR3701352}).
\begin{lemma}\label{L:right-and-left-completeness}
    If $X$ is a derived algebraic stack, then the standard $t$-structure on $\Dsqc(X)$ is accessible. Moreover, $\Dsqc(X)$ is both left and right complete with respect to it.
\end{lemma}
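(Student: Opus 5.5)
The strategy is to present $\Dsqc(X)$ as a limit of module categories along $t$-exact functors, build the $t$-structure termwise, and then check both completeness statements termwise. By the discussion preceding \eqref{eq:small-limit-dsqc}, we have $\Dsqc(X) \simeq \varprojlim_{i\in I} \Ds(A_i)$. Here $I$ is a small $1$-category and all transition maps $\Spec A_j \to \Spec A_i$ are smooth, hence flat. Each transition functor $- \ltensor_{A_i} A_j$ is therefore $t$-exact for the standard $t$-structures: for a flat algebra, $\H^n(M \ltensor_{A_i} A_j) \simeq \H^n(M)\otimes_{\pi_0(A_i)}\pi_0(A_j)$ by \cite[Proposition 7.2.2.13]{Lurie-HA}.

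\textbf{Step 1 (the $t$-structure).} Define $\Dsqc^{\leq 0}(X)$ and $\Dsqc^{\geq 0}(X)$ as the objects $\{M_i\}$ with every $M_i$ in $\Ds(A_i)^{\leq 0}$, respectively $\Ds(A_i)^{\geq 0}$. Since the transition functors are $t$-exact, they commute with the truncations $\tau^{\leq 0}$ and $\tau^{\geq 1}$. Hence $\{\tau^{\leq 0}M_i\}$ and $\{\tau^{\geq 1}M_i\}$ form compatible families, i.e.\ objects of the limit. The termwise cofiber sequences assemble into a cofiber sequence in $\Dsqc(X)$, and orthogonality holds because mapping spaces are limits of the termwise ones (Example \ref{ex:limit-explicit}). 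Thus we obtain a $t$-structure, and restriction to each $\Ds(A_i)$ is $t$-exact.

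For accessibility, note that $\Dsqc^{\leq 0}(X) \simeq \varprojlim_I \Ds(A_i)^{\leq 0}$. This is a small limit of presentable categories along colimit-preserving functors, so it is presentable by \cite[Proposition 5.5.3.13]{HTT}, and the $t$-structure is accessible.

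\textbf{Step 2 (right completeness).} Colimits in $\Dsqc(X)$ are computed termwise, since the transition functors preserve colimits. Truncations are also computed termwise by Step 1. So for $M=\{M_i\}$, the map $\varinjlim_n \tau^{\geq -n}M \to M$ restricts on each $\Ds(A_i)$ to $\varinjlim_n \tau^{\geq -n}M_i \to M_i$. This map is an equivalence because $\Ds(A_i)$ is right complete. The same termwise argument shows that $\bigcap_n \Dsqc^{\geq n}(X) = 0$, which with the colimit statement gives right completeness.

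\textbf{Step 3 (left completeness).} This is the main obstacle, because limits in $\varprojlim_I\Ds(A_i)$ are computed termwise only if the transition functors preserve the relevant limits, and base change along a flat map need not preserve arbitrary limits. I would prove that the specific Postnikov limits are preserved. Take a tower $\cdots \to N_{r+1} \to N_r$ in $\Ds(A_i)$ with $N_r \in \Ds(A_i)^{\geq -r}$ and $\tau^{\geq -r}N_{r+1}\simeq N_r$. In each fixed degree the cohomology of such a tower is eventually constant, so by the Milnor sequence $\H^n(\varprojlim_r N_r) \simeq \H^n(N_{r})$ for $r\gg |n|$. Flat base change is exact on cohomology, so $\varprojlim_r(N_r\ltensor_{A_i}A_j)$ has cohomology $\H^n(N_r)\otimes_{\pi_0(A_i)}\pi_0(A_j)$. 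Hence the comparison map $(\varprojlim_r N_r)\ltensor_{A_i}A_j \to \varprojlim_r(N_r\ltensor_{A_i}A_j)$ is an equivalence. Consequently $\varprojlim_r \tau^{\geq -r}M$ may be computed termwise, and the map $M \to \varprojlim_r\tau^{\geq -r}M$ restricts to the Postnikov map in each left complete $\Ds(A_i)$, so it is an equivalence. The same argument shows that every compatible Postnikov tower in $\Dsqc(X)$ has a limit, which is the other half of left completeness in the sense of \cite[\S1.2.1]{Lurie-HA}. One can alternatively cite \cite[Proposition 2.2.5.4, Proposition 9.1.3.1]{SAG} for these steps.
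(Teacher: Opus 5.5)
Your proof is correct. Step 1 (the $t$-structure built termwise, using $t$-exactness of the smooth transition functors, then \cite[Proposition 5.5.3.13]{HTT} for presentability of the aisle) is essentially what the paper does.

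The routes diverge at left completeness. The paper never computes a limit inside $\Dsqc(X)$. It takes left completeness to mean that $\Dsqc(X)\to\varprojlim_n\Dsqc^{\geq -n}(X)$ is an equivalence, and rewrites the target as $\varprojlim_n\varprojlim_{i\in I}\Ds^{\geq -n}(A_i)$. This is legitimate because $t$-exactness makes $(n,i)\mapsto\Ds^{\geq -n}(A_i)$ a diagram indexed by $\mathbf{N}^{\mathrm{op}}\times I$. It then interchanges the two limits and reduces to left completeness of each $\Ds(A_i)$. Right completeness is the same argument with $\tau^{\leq n}$.

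You instead argue object by object. Your Milnor-sequence computation shows that flat base change preserves limits of Postnikov-type towers. Hence such limits exist in $\Dsqc(X)$ and are computed termwise, and you then check full faithfulness and essential surjectivity by hand. This costs more: it tacitly uses that a diagram in $\varprojlim_I\Ds(A_i)$ whose termwise limits are preserved by the transition functors has its limit computed termwise, which you can get from the mapping-space formula in Example \ref{ex:limit-explicit}. In exchange it yields the concrete extra fact that smooth pullback commutes with $\varprojlim_r\tau^{\geq -r}(-)$. The paper's argument is shorter and purely formal.

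There is one indexing slip in Step 2. In the paper's cohomological conventions there is no natural map $\tau^{\geq -n}M\to M$, and $\{\tau^{\geq -n}M\}_n$ is an inverse system. The colimit you want is $\varinjlim_n\tau^{\leq n}M\to M$. Your vanishing condition $\bigcap_n\Dsqc^{\geq n}(X)=0$ is stated in the correct convention.
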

\begin{proof}
    By \eqref{eq:small-limit-dsqc}, $\Dsqc(X) \simeq \varprojlim_{i\in I} \Ds(A_i)$, where the limit is over some smooth maps $\Spec A_i \to X$ with smooth transition maps. In particular, for each $n\in \Z$, it follows that for $*\in \{ >n, <n, \geq n, \leq n\}$ this equivalence restricts to:
    \begin{equation}
        \Dsqc^{*}(X) \simeq \varprojlim_{i\in I} \Ds^*(A_i). \label{eq:small-limit-dsqc-trunc}
    \end{equation}
    Hence,  $\Dsqc^{\leq 0}(X)$ is presentable \cite[Proposition 5.5.3.13]{HTT} and the $t$-structure is accessible \cite[Proposition 1.4.4.13]{Lurie-HA}. For left completeness:  there is a commuting square:
    \[
    \begin{tikzcd}
\Dsqc(X) \arrow[r] \arrow[d, "\sim" sloped] & \varprojlim_n \Dsqc^{\geq -n}(X) \arrow[d, "\sim" sloped] \\
\varprojlim_{i\in I} \Ds(A_i) \arrow[r]                                                & \varprojlim_n \varprojlim_{i \in I}\Ds^{\geq -n}(A_i),
\end{tikzcd}
    \]
    where the vertical arrows are equivalences by \eqref{eq:small-limit-dsqc-trunc}. Interchanging the limits in the bottom right corner, to show that the top arrow is an equivalence, it suffices to prove $\Ds(A)$ is left complete whenever $A$ is an animated ring. This is just \cite[Proposition 7.1.1.13]{Lurie-HA}. The right completeness is similar, so is omitted.
\end{proof}
If $f \colon X \to S$ is a morphism of derived algebraic stacks, then there is an adjoint pair
\[
\LDER f^* \colon \Dsqc(S) \leftrightarrows \Dsqc(X) \colon \RDER f_*.
\]
Indeed, we take $\LDER f^*$ to be restriction and it is clear that it preserves small colimits. It now follows that $\LDER f^*$ admits a right adjoint $\RDER f_*$ \cite[Corollary 5.5.2.9]{HTT}. General properties of $t$-structures and adjoint functors show that $\LDER f^*$ is right $t$-exact and $\RDER f_*$ is left $t$-exact with respect to the standard $t$-structure. Since $\RDER f_*$ is constructed as an adjoint functor, it may be poorly behaved (e.g., not compatible with base change). This will be addressed later by \emph{concentrated} morphisms (Definition \ref{D:concentrated}).
\begin{example}\label{ex:affine-mod-adjoints}
  If $\phi \colon A \to B$ is a map of animated rings, then there is
  an induced morphism of affine derived schemes
  $f\colon \Spec B \to \Spec A$. It follows that $\LDER f^*$ under the
  equivalences of Example \ref{ex:affine-mod} is just $- \ltensor_A
  B$. Similarly, $\RDER f_*$ is just the forgetful functor from
  $B$-modules to $A$-modules. 
\end{example}  
\begin{example}\label{ex:classical-qcoh-adjoints}
    Let $f \colon X \to S$ be a morphism of classical algebraic stacks. Then there is a commuting diagram:
    \[
    \begin{tikzcd}
\mathrm{Ho}(\Dsqc(iS)) \arrow[r, "\mathrm{Ho}(\LDER f^*)"] \arrow[d, "\sim" sloped] & \mathrm{Ho}(\Dsqc(iX)) \arrow[d, "\sim" sloped] \\
\Dqc(S) \arrow[r, "\LDER f^*_{\qc}"']                                                & \Dqc(X),
\end{tikzcd}
    \]
    where the vertical equivalences are from Example \ref{ex:classical-qcoh} and the horizontal map along the bottom is from \cite[\S1]{perfect_stax}.
    By uniqueness of adjoints, we find a similar relationship between $\mathrm{Ho}(\RDER f_*)$ and $\RDER f_{\qc,*}$, where the latter functor comes from \cite[\S1]{perfect_stax}.
\end{example}

\begin{example}\label{ex:base-change-affine}
Consider a cartesian diagram of derived algebraic stacks:
\begin{equation}
    \begin{tikzcd}
    X' \arrow[r,"\gamma"] \arrow[d,"f'"'] & X \arrow[d,"f"] \\
    S' \arrow[r,"g"] & S.
\end{tikzcd}
\label{eq:cartesian-square}
\end{equation}
If $g$ is affine, then the natural morphism of functors $\LDER f^*\RDER g_* \to \RDER \gamma_* \LDER f'^*$ is an equivalence. If $X$ and $S$
 are affine, then the result is trivial from Example \ref{ex:affine-mod-adjoints}. If $S$ is affine, then we can write $\Dsqc(X) = \varprojlim_i \Ds(B_i)$ and $\Dsqc(X') = \varprojlim_i \Ds(B_i')$, where $\Spec B'_i \simeq X'\times_X^\RDER \Spec B_i$ (note that this is over the same indexing category $I$). The affine case already considered, combined with Lemma \ref{lem:bc-prl-limits}, proves it in this case. In general, we can write $\Dsqc(S) = \varprojlim_j \Ds(A_j)$ and $\Dsqc(S') = \varprojlim_j \Ds(A_j')$, where $\Spec A_j' = S'\times_S^\RDER \Spec A_j$ (again, note that we are over the same indexing category $J$). If we set $X_j = X \times_S^\RDER \Spec A_j$ and $X'_j = X'\times_{S'}^{\RDER} \Spec A_j'$, then we have the base change result for the resulting square. It follows from Lemma \ref{lem:bc-prl-limits} again that we now have it for our original square.
 \end{example}
\begin{example}\label{ex:affine-projection-formula}
    Let $g \colon S' \to S$ be an affine morphism of derived algebraic stacks. If $M \in \Dsqc(S')$ and $N \in \Dsqc(S)$, then there is a natural equivalence:
    \[
    (\RDER g_*M) \ltensor_{\Orb_S} N \simeq \RDER g_*( M \ltensor_{\Orb_{S'}} \LDER g^*N).
    \]
    This is the affine projection formula. By Example
    \ref{ex:base-change-affine}, we may further assume
    $g$ is a map of affine derived schemes. Now
    apply Example \ref{ex:affine-mod-adjoints} (cf.~Proposition
    \ref{prop:relative-compactness-properties}).
\end{example}
\begin{example}\label{ex:bounded-below-colimits}
    Let $f\colon  X \to S$ be a quasi-compact and quasi-separated morphism of derived algebraic stacks. If $n\in \Z$, then $\RDER f_*$ restricted to $\Dsqc^{\geq n}(X)$ preserves filtered colimits. To see this, since $\Dsqc(X)$ and $\Dsqc(S)$ are right complete with respect to the standard $t$-structure, it suffices to prove that $\RDER f_*$ preserves filtered colimits when restricted to $\Dsqc^{[n,m]}(X)$. A simple induction further reduces this to the preservation of filtered colimits in $\Dsqc(X)^{\heartsuit}$. Combining Remark \ref{rem:heart-derived} with Example \ref{ex:classical-qcoh-adjoints}, we are reduced to the case where $X$ and $S$ are classical. Now apply \cite[Lemma 1.2(3)]{perfect_stax}. For $\RDER f_*$ to preserve all filtered colimits in $\Dsqc(X)$, we need $f$ to be concentrated (Definition \ref{D:concentrated} and Proposition \ref{prop:relative-compactness-properties}\ref{prop:relative-compactness-properties:sum}). It is not too difficult to argue from here that if we have a cartesian square as in \eqref{eq:cartesian-square}, where $g$ is flat, then there is an equivalence $\LDER g^*\RDER f_* \to \RDER f'_* \LDER \gamma^*$ on $\Dsqc^{+}(X)$.
  \end{example}
If $X$ is a derived algebraic stack and $n\geq 0$, we say that $M \in \Dsqc^{\leq 0}(X)$ is \emph{finitely $n$-presented} if it is $n$-truncated (i.e., $\tau^{<-n}M \simeq 0$) and $(n+1)$-perfect. It is perhaps simplest to think of finitely $n$-presented modules as being $n$-truncations of connective $(n+1)$-perfect modules \cite[Remark 2.7.1.3]{SAG}.
We let $\Dsqc^{n-fp}(X)$ denote the full subcategory spanned by those $M$ that are finitely $n$-presented. 
It will be convenient to let $\Ds^{n-fp}(A)$ denote $\Dsqc^{n-fp}(\Spec A)$.
\begin{example}\label{ex:0fp-classical}
  Let $X$ be a classical algebraic stack. Then there is a fully
  faithful embedding
  $\Dsqc^{0-fp}(iX) \subseteq \Dsqc(iX)^{\heartsuit}$. Its image under
  the equivalence $\Dsqc(iX)^{\heartsuit}\simeq \qc(X)$ is just
  $\qc^{fp}(X)$, the finitely presented $\Orb_X$-modules.
\end{example}
\begin{example}\label{ex:fp-noeth}
  Let $X$ be a noetherian derived algebraic stack. If $M$ is finitely
  $n$-presented, then $M$ is pseudo-coherent. Indeed, $M$ is
  $n$-truncated and $(n+1)$-perfect, so $M$ is almost connective and
  $\mathcal{H}^i(M)$ is a coherent $\Orb_{X_\cl}$-module for all
  $i\in \Z$ (for $i\gg 0$ and $i<-n$ it is zero and the remaining cases
  follow from Example \ref{ex:noetherian-ap}). It follows from Example
  \ref{ex:noetherian-ap} that $M$ is pseudo-coherent.
\end{example}
If $f \colon X \to S$ is a morphism of derived stacks, then $\tau^{\geq -n}\LDER f^*$ induces a natural functor from $\Dsqc^{n-fp}(S)$ to $\Dsqc^{n-fp}(X)$. We have the following useful lemma.
\begin{lemma}\label{L:nfp-finitelimit}
    Let $Y$ be a quasi-compact and quasi-separated derived algebraic stack. Let $n\geq 0$. There is a finite category $J_{n}$ and maps $\Spec B_j \to Y$ for $j\in J_{n}$ such that if $j \to j'$ in $J_{n}$, then $\Spec B_{j'} \to \Spec B_j$ is smooth and if $Y' \to Y$ is affine and $\Spec B_j' = \Spec B_j \times_Y^\RDER Y'$, then there is an equivalence of small $\infty$-categories:
    \[
    \Dsqc^{n-fp}(Y') \simeq \varprojlim_{j\in J_{n}} \Ds^{n-fp}(B_j').
    \]
\end{lemma}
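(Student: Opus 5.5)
We reduce the statement to the case of a single affine $Y'$ over a fixed finite diagram of affines covering $Y$, and use finite truncatedness to replace the usual infinite Čech limit by a finite one.

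\textbf{Step 1 (cover and Čech nerve).} Since $Y$ is quasi-compact and quasi-separated, choose a smooth surjection $u\colon U_0=\Spec B\to Y$. Its Čech nerve $U_\bullet$ has each $U_k=U_0\times_Y\cdots\times_Y U_0$ a quasi-compact, quasi-separated derived algebraic space. Because $Y$ is only quasi-separated, the $U_k$ need not be affine. We therefore cover each $U_k$ in turn: take an étale cover by an affine, pass to its Čech nerve, and repeat for the resulting quasi-compact quasi-separated algebraic spaces. This bootstrapping is the same one used to produce \eqref{eq:small-limit-dsqc}, and it yields a diagram of affines $\Spec B_j\to Y$ with smooth transition maps.

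\textbf{Step 2 (base change).} For an affine morphism $Y'\to Y$, the base change of this diagram along $Y'\to Y$ is again a covering diagram of the same shape for $Y'$, with all terms affine, namely $\Spec B'_j=\Spec B_j\times^\RDER_Y Y'$. Since smooth descent holds for $\Dsqc$ and restricts to the full subcategories of $n$-truncated connective objects, we get
\[
\Dsqc^{n-fp}(Y')\simeq\varprojlim \Ds^{n-fp}(B'_j)
\]
over the (infinite) indexing category. This uses that pullback along a smooth map is $t$-exact, so $\tau^{\geq -n}\LDER f^*=\LDER f^*$ on these objects. It also uses that being $(n+1)$-perfect and $n$-truncated is smooth-local. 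For the locality of $(n+1)$-perfection, apply Proposition \ref{P:flat-descent-r-perfect} with $A=B$, which recovers absolute $r$-perfection by Proposition \ref{proposition: rel-abs-agree}.

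\textbf{Step 3 (finiteness, the crux).} The category $\Dsqc^{n-fp}(Y')$ of $n$-truncated connective objects is an $(n+1)$-category: its mapping spaces between $n$-truncated connective objects are $n$-truncated. Each $\Ds^{n-fp}(B'_j)$ is likewise an $(n+1)$-category, so every cosimplicial limit in the construction factors through $\tCat_{\leq n+1}$. By Example \ref{ex:n-category-truncation}, each $\Delta_s$-indexed limit may therefore be replaced by its restriction to the finite category $\Delta_{s,\leq n+2}$.

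Performing this truncation at every stage of the bootstrap gives a finite category $J_n$, built as an iterated Grothendieck construction of finitely many finite diagrams, whose transition maps are smooth. The resulting limit formula is compatible with base change along any affine $Y'\to Y$, since the diagram $J_n$ and the maps $\Spec B_j\to Y$ were chosen on $Y$ and only base changed.

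The main obstacle is making the iterated limit genuinely a limit over a single finite $1$-category $J_n$, rather than a nested limit of limits. I would handle this by writing the nested limits as one limit over the Grothendieck construction of the finite diagram of finite index categories, which is again finite. It also remains to verify that the descent equivalences restrict to the $n$-fp subcategories as full subcategories, which follows from locality as in Step 2.
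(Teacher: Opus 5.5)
Your proposal is essentially the paper's proof: a smooth \v{C}ech cover $\Spec B \to Y$; the observation that $\Dsqc^{n-fp}$ takes values in $(n+1)$-categories, so that Example~\ref{ex:n-category-truncation} cuts each $\Delta_s$-indexed limit down to the finite $\Delta_{s,\leq n+2}$; the bootstrap through algebraic spaces and affine diagonal down to affines; and stability of everything under affine base change $Y'\to Y$. The assembly into a single finite $J_n$ that you flag is handled no more explicitly in the paper (``we are reduced to\ldots''). Your Grothendieck-construction fix works only if the affine covers of the various $U_k$ are chosen compatibly with the face maps (e.g.\ inductively, by covering the relative matching objects), not independently as your Step~1 suggests.
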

\begin{proof}
Since $Y$ is quasi-compact and quasi-separated, there is a smooth cover $u_0 \colon U_0=\Spec B \to Y$ and the $(k+1)$-fold fiber products $U_k=U_0\times_Y \cdots \times_Y U_0$ are all quasi-compact and quasi-separated algebraic spaces. Base changing this along the affine morphism $Y' \to Y$, we obtain $U_0' = \Spec B'=\Spec B \times_Y Y'$ and $U_k'$ that are all affine over $U_k$. Also, if $W$ is a derived algebraic stack, then the objects of $\Dsqc^{n-fp}(W)$ are all connective and $n$-truncated, so their mapping spaces are all $n$-truncated. By \cite[Proposition 2.3.4.18]{HTT}, it follows that $\Dsqc^{n-fp}(W)$ is equivalent to an $(n+1)$-category. Hence,
    \[
    \Dsqc^{n-fp}(Y') \simeq \varprojlim_{[k] \in \Delta_s}\Dsqc^{n-fp}(U'_k) \simeq \varprojlim_{[k]\in \Delta_{s,\leq n+2}}\Dsqc^{n-fp}(U'_k).
  \]
  The second equivalence follows from Example \ref{ex:n-category-truncation}.
    Since $\Delta_{s,\leq n+2}$ is finite, we are reduced to the situation where $Y$ is a quasi-compact and quasi-separated algebraic space. Bootstrapping, we are reduced to the situation where $Y$ is a quasi-compact algebraic space with affine diagonal. Bootstrapping again, we are reduced to the situation where $Y=\Spec A$ is affine.  Finally, $\Ds^{n-fp}(A)$ is small (e.g., \cite[Remark  2.7.1.4]{SAG}).  
\end{proof}
We now have the following analog of \cite[Theorem 4.5.2.3]{SAG}. Note that Example \ref{ex:0fp-classical} shows that this recovers the ``standard limit results'' of \cite[\S8]{EGAIV3}.
\begin{theorem}\label{thm:fp-approx}
    Let $R$ be an animated ring. Consider a filtered diagram of animated $R$-algebras $\{A_\lambda\}_{\lambda \in \Lambda}$ with colimit $A_{\infty}$. Let $X \to \Spec R$ be an almost finitely presented derived algebraic stack and set $X_\lambda = X\times^{\RDER}_{\Spec R} \Spec A_\lambda$. If $n\geq 0$, then the canonical map induces an equivalence of small $\infty$-categories:
    \[
    \theta \colon \varinjlim_\lambda \Dsqc^{n-fp}(X_\lambda) \to \Dsqc^{n-fp}(X_{\infty}).
    \]
\end{theorem}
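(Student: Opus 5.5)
We reduce to the affine case \cite[Theorem 4.5.2.3]{SAG} via Lemma \ref{L:nfp-finitelimit}. The stack $X$ is quasi-compact and quasi-separated, being almost finitely presented over $\Spec R$. Lemma \ref{L:nfp-finitelimit}, applied with $Y=X$, therefore gives a finite category $J_n$ and smooth maps $\Spec B_j \to X$. Each $X_\lambda \to X$ is affine, since it is a base change of $\Spec A_\lambda \to \Spec R$, and the same holds for $X_\infty \to X$. Setting $\Spec B_{j,\lambda} = \Spec B_j \times^\RDER_X X_\lambda \simeq \Spec(B_j \ltensor_R A_\lambda)$, and similarly $B_{j,\infty} = B_j \ltensor_R A_\infty \simeq \varinjlim_\lambda B_{j,\lambda}$, we obtain equivalences
\[
\Dsqc^{n-fp}(X_\lambda) \simeq \varprojlim_{j\in J_n} \Ds^{n-fp}(B_{j,\lambda}), \qquad \Dsqc^{n-fp}(X_\infty) \simeq \varprojlim_{j\in J_n} \Ds^{n-fp}(B_{j,\infty}).
\]
These equivalences are natural in $\lambda$ and compatible with the functors $\tau^{\geq -n}\LDER(-)^*$, because the construction in the proof of the lemma only uses pullbacks along the fixed diagram.

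For each $j$, \cite[Theorem 4.5.2.3]{SAG} (the statement that $\varinjlim_\lambda \Ds^{n-fp}(B_\lambda) \simeq \Ds^{n-fp}(\varinjlim B_\lambda)$ for filtered colimits of animated rings) shows that $\varinjlim_\lambda \Ds^{n-fp}(B_{j,\lambda}) \to \Ds^{n-fp}(B_{j,\infty})$ is an equivalence. The map $\theta$ is then identified with the comparison map
\[
\varinjlim_\lambda \varprojlim_{j\in J_n} \Ds^{n-fp}(B_{j,\lambda}) \to \varprojlim_{j \in J_n} \varinjlim_\lambda \Ds^{n-fp}(B_{j,\lambda}).
\]
This map is an equivalence because $J_n$ is finite and filtered colimits commute with finite limits in $\Cat$ (\cite[Proposition 5.3.3.3]{HTT}). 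The $\infty$-categories involved are small, by the smallness in Lemma \ref{L:nfp-finitelimit}.

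The main obstacle is the naturality bookkeeping. We must check that the equivalences from Lemma \ref{L:nfp-finitelimit} for the various $Y'=X_\lambda$ fit into a functor of $\lambda$, so that the diagram indexed by $\Lambda \times J_n$ is coherent. We must also check that the transition functors on both sides are the truncated pullbacks $\tau^{\geq -n}\LDER(-)^*$. The first check should follow by running the bootstrap in the proof of Lemma \ref{L:nfp-finitelimit} functorially in the affine $Y'\to Y$: all constructions there are base changes of a fixed hypercover-type diagram over $X$, and pullback along affine maps commutes with these base changes. The second check holds because $\tau^{\geq -n}$ commutes with the $t$-exact smooth restrictions. A minor additional point is that almost finite presentation of $X$ is used only to ensure that $X$ is quasi-compact and quasi-separated. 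Everything else is absorbed into the affine result, which holds for arbitrary animated rings.
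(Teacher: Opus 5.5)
Your proposal is correct and follows essentially the same route as the paper. Both apply Lemma \ref{L:nfp-finitelimit} to the quasi-compact, quasi-separated stack $X$ with the affine maps $X_\lambda \to X$ for $\lambda \in \Lambda \cup \{\infty\}$, invoke the affine statement for each $j \in J_n$, and commute the finite limit over $J_n$ with the filtered colimit over $\Lambda$. Your explicit attention to naturality in $\lambda$ is a point the paper leaves implicit in its commuting square.

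The differences are only in citations. The paper uses \cite[Corollary 4.5.1.10]{SAG} for the affine case and \cite[Lemma 4.5.3.1]{SAG} for the limit--colimit interchange. Note that \cite[Proposition 5.3.3.3]{HTT} concerns filtered colimits in spaces, so transferring it to $\Cat$ needs a small extra step, for example via complete Segal spaces.
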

\begin{proof}
    By Lemma \ref{L:nfp-finitelimit}, there is a finite category $J_n$ and a system of smooth morphisms $\{\Spec B_j \to X\}_{j\in J_n}$ with smooth transition maps such that if $\lambda \in \Lambda \cup \{\infty\}$ and $\Spec B_{j,\lambda} = X_\lambda \times_{X}^\RDER \Spec B_j$, then there is an equivalence of small $\infty$-categories:
    \[
    \Dsqc^{n-fp}(X_{\lambda}) \simeq \varprojlim_{j\in J_n} \Ds^{n-fp}(B_{j,\lambda}).
    \]
   Hence, in the commuting diagram:
    \[
    \begin{tikzcd}
\varinjlim_{\lambda}\Dsqc^{n-fp}(X_\lambda) \arrow[r] \arrow[d, "\sim" sloped] & \Dsqc^{n-fp}(X_{\infty}) \arrow[d, "\sim" sloped] \\
\varinjlim_{\lambda}\varprojlim_{j\in J_n} \Ds^{n-fp}(B_{j,\lambda}) \arrow[r]                                                &  \varprojlim_{j \in J_n}\Ds^{n-fp}(B_{j,\infty}),
\end{tikzcd}
\]
the vertical arrows are all equivalences. Since these categories are small and $J_n$ is finite and $\Lambda$ is filtered, the limit and the colimit in the bottom left corner can be interchanged \cite[Lemma 4.5.3.1]{SAG}. It now follows from \cite[Corollary 4.5.1.10]{SAG} that the map along the bottom is an equivalence. The result follows.
\end{proof}
\bibliographystyle{dary}
\bibliography{bib}
\end{document}